
\documentclass[hidelinks,12pt]{article}

\usepackage[margin=1in]{geometry}  

\usepackage{BOONDOX-cal}
\usepackage{amsmath}               
\usepackage{amsfonts}              
\usepackage{amsthm}                
\usepackage{amssymb}
\usepackage{titlesec}
\usepackage{verbatim}
\usepackage{hyperref}
\usepackage{theoremref}

\usepackage{bm}


\newtheoremstyle{break}
  {\topsep}{\topsep}%
  {\itshape}{}%
  {\bfseries}{}%
  {\newline}{}%
\theoremstyle{break}
\newtheorem{thm}{Theorem}

\theoremstyle{definition}
\newtheorem{lemma}[thm]{Lemma}

\newtheorem{prop}[thm]{Proposition}

\theoremstyle{definition}
\newtheorem{corollary}[thm]{Corollary}

\theoremstyle{definition}
\newtheorem{remark}{Remark}

\newtheorem{mydef}{Definition}

\newcommand{\RR}{\mathbb{R}}      
\newcommand{\ZZ}{\mathbb{Z}}      
\newcommand{\CC}{\mathbb{C}}
\newcommand{\D}{\text{d}}
\newcommand{\IFF}{if and only if }
\usepackage{physics}

\numberwithin{equation}{section}

\begin{document}

\title{\textbf{On the commutation properties of finite convolution and differential operators II: sesquicommutation}}

\author{Yury Grabovsky, \qquad Narek Hovsepyan}

\date{}
\maketitle

\begin{abstract}
We introduce and fully analyze a new commutation relation $\overline{K} L_1 =
L_2 K$ between finite convolution integral operator $K$ and differential
operators $L_1$ and $L_{2}$, that has implications for spectral properties of
$K$. This work complements our explicit characterization of commuting pairs
$KL=LK$ and provides an exhaustive list of kernels admitting commuting or
sesquicommuting differential operators.
\end{abstract}

\tableofcontents

\section{Introduction}

In many applications the question of understanding spectral properties of finite convolution integral operators

\begin{equation} \label{K}
(Ku)(x) = \int_{-1}^1 k(x-y) u(y) \D y
\end{equation} 
are important. In some cases one is able to find a differential operator $L$
commuting with $K$ (cf. \cite{morrison,widom,katsevich1,gr hov commut}),
\begin{equation} \label{C3}
KL = L K.
\end{equation}

\noindent Then, eigenfunctions of $K$ can be chosen to be solutions of ordinary differential equations. This allows to obtain analytical information about the eigenvalues and eigenfunctions of integral operators, using the vast literature on asymptotic properties of solutions of ordinary differential equations.

An example of this phenomenon is the result of Widom \cite{widom}, where the author obtained asymptotic behavior of the eigenvalues of a family of integral operators with real-valued even kernels, using comparison with special operators that commute with differential operators. A complete characterization of such special operators commuting with symmetric second order differential operators was obtained by Morrison \cite{morrison} (see also \cite{gr hov commut}, \cite{grunbaum}, \cite{wright},). We are interested in the possibility of extending these ideas to complex-valued kernels $k(z)$. In this more general context the property of commutation must also be generalized, so as to permit the characterization of eigenfunctions as solutions of an eigenvalue problem for a second or fourth order differential operator. 

To that end we introduce a new kind of commutation relation, referred to as \textit{sesquicommutation}:  

\begin{equation} \label{C1}
\begin{cases}
\overline{K} L_1 = L_2 K,
\\
L_j^T = L_j, 
\end{cases}\quad j=1,2 ,
\tag{C}
\end{equation}

\noindent where $L_1, L_2$ are differential operators with complex coefficients (we note that the condition $L=L^T$ also includes constraints on the boundary values of coefficients of $L$).  It can be easily checked that in this case

\begin{equation} \label{L1 K* K = conj K* K L1}
L_1 K^*K = \overline{K^* K} L_1 .
\end{equation}

\noindent Let now $\lambda$ be a singular value of $K$ corresponding to singular function $u$, i.e. $K^* K u = \lambda u$, clearly $\lambda \in \RR$ and therefore we find $\lambda \overline{L_1 u} = K^* K \overline{L_1 u}$. It follows that $\overline{L_1 u}$ is either zero, or an eigenfunction of $K^*K$ with the same eigenvalue $\lambda$. If the corresponding eigenspace of $K^*K$ is one-dimensional, then there exists a complex number $\sigma$ such that

\begin{equation*}
L_1 u = \sigma \overline{u} .
\end{equation*}

\noindent Otherwise, applying \eqref{L1 K* K = conj K* K L1} to $\overline{L_1 u}$ we find that

\begin{equation*}
K^* K (L_1^* L_1 u) = \lambda L_1^* L_1 u ,
\end{equation*} 

\noindent hence eigenspaces of $K^* K$ are invariant under the fourth order self-adjoint operator $L_1^* L_1$. In particular, there exists an eigenbasis of $K^* K$ consisting of eigenfunctions of $L_1^* L_1$.  Moreover, transposing the sesquicommutation relation and then taking adjoint we find $K L_1^* = L_2^* \overline{K}$, which along with \eqref{C1} implies

\begin{equation*}
K L_1^* L_1 = L_2^* L_2 K .
\end{equation*}  

\noindent In particular if $L_1 = L_2 =: L$ we see that $L^*L$ commutes with $K$ (and also with $K^*$), hence eigenspaces of $L^*L$ are invariant under $K$ and $K^*$.

Under the assumptions that $k$ is analytic at $0$ and $K$ is self-adjoint we analyze the sesquicommutation relation \eqref{C1}. In Theorem~\ref{THM reduction} we show that if $k$ is nontrivial (see Definition~\ref{trivial DEF}), then either $L_1 = L_2$ or $L_1 = -L_2$. The latter case yields only trivial kernels (cf. Theorem~\ref{THM sesqui comm L2=-L1}). The results in the former case are listed in Theorem~\ref{THM sesqui comm}. Note that Morrison's result lies in the intersection of commutation and sesquicommutation (with $L_1 = L_2$), when $K$ is real and self-adjoint. In fact, in this case sesquicommutation actually reduces to commutation. 

\begin{remark} \label{REM example sesqui}
As a particularly interesting example derived from sesquicommutation, we mention that the eigenfunctions of the compact self-adjoint integral operator $K$ with kernel $\displaystyle k(z) = \frac{e^{-i\frac{\pi}{4}z}}{\cos \frac{\pi}{4}z} + \frac{z e^{i \frac{\pi}{4}z}}{\sin \frac{\pi}{2}z}$ are eigenfunctions of the fourth order self-adjoint differential operator $L^* L$, where

\begin{equation*}
L = - \tfrac{\D}{\D y} \left[ \cos \left( \tfrac{\pi y}{2} \right)\tfrac{\D}{\D y} \right]  + \tfrac{\pi^2}{32} e^{i\tfrac{\pi y}{2} }.
\end{equation*}

\noindent Moreover, if eigenspaces of $K$ are one-dimensional, then eigenfunction $u$ of $K$ satisfies a second order differential equation $L u = \sigma \overline{u}$ for some $\sigma \in \CC$.

\end{remark}

\section{Preliminaries}

We assume that $k(z) \in L^2((-2,2), \CC)$ is analytic in a neighborhood of $0$. Further, assume that $L_j$ are second order differential operators:

\begin{equation} \label{L}
\begin{cases}
L u = \mathcal{a} u'' + \mathcal{b} u' + \mathcal{c} u,
\\
\mathcal{a}(\pm 1) = 0, \ \mathcal{b}(\pm 1) = \mathcal{a}'(\pm 1) ,
\end{cases}
\end{equation}

\noindent where the indicated boundary conditions are necessary for the sesquicommutation relation to hold. They are also necessary for symmetry of differential operators, in which case we will only be specifying additional constraints on the coefficients of $L$, always assuming that the boundary conditions in \eqref{L} hold. In particular operators $L_j$ have to be of Sturm-Liouville type, since $L = L^T$ implies that $\mathcal{b} = \mathcal{a}'$. Thus

\begin{equation} \label{L1, L2}
\begin{cases}
L_j u = (\mathcal{b}_j u')' + \mathcal{c}_j u,
\\
\mathcal{b}_j(\pm 1) = 0,
\end{cases} \quad j=1,2.
\end{equation}

\noindent Due to the imposed boundary conditions it is a matter of integration by parts to rewrite \eqref{C1} as

\begin{equation}\label{R1}
\begin{split}
\mathcal{b}_1(y) \overline{k''(z)} - \mathcal{b}_2(y+z) k''(z) - \mathcal{b}_1'(y) \overline{k'(z)} - \mathcal{b}_2'(y+z) k'(z) + &
\\
+\mathcal{c}_1(y) \overline{k(z)} - \mathcal{c}_2(y+z) k(z) &= 0 .
\end{split}
\tag{R}
\end{equation}

The main idea of the proof is to analyze \eqref{R1} by differentiating it w.r.t. $z$ sufficient number of times and evaluating the result at $z=0$. This allows one to find relations between the coefficient functions of the differential operators, and an ODE for the highest order coefficient. Once the form of the highest order coefficient is determined, we consequently find the forms of all the other coefficient functions. It turns out that the coefficient functions satisfy linear ODEs with constant coefficients, and therefore are equal to linear combinations of polynomials multiplied by exponentials. We then substitute these expressions into \eqref{R1} and using the linear independence of functions $y^j e^{y \lambda_l}$, obtain equations for $k$. Then the task becomes to analyze how many of these equations can be satisfied by $k$ and how its form changes from one equation to another.

\begin{remark}
The reason that reduction of \eqref{C1} to $L_1 = \pm L_2$  (see Section~\ref{reduction SECTION}) works, is the self-adjointness assumption on $K$. This induces symmetry in \eqref{R1}. More precisely, \eqref{R1} becomes a relation involving the even and odd parts (and their derivatives) of the function $k(z) e^{\frac{\lambda}{2} z}$. And as a result the relations for even and odd parts separate. We then prove that if $L_1 \neq \pm L_2$, then both even and odd parts of $k$ are determined in a way that $k$ becomes trivial. 
\end{remark}

\section{Main Results}

\begin{mydef} \label{trivial DEF}

We will say that $k$ (or operator $K$) is \textit{trivial}, if it is a finite linear combination of exponentials $e^{\alpha z}$ or has the form  $e^{\alpha z} p(z)$, where $p(z)$ is a polynomial. Note that in this case $K$ is a finite-rank operator.

\end{mydef}

\vspace{.1in}

\noindent Let us assume that

\begin{enumerate}
\item[(A)] $K$ is self-adjoint, so $k(-z) = \overline{k(z)}, \qquad z \in [-2,2]$.

\end{enumerate}

\begin{thm}[Reduction of sesquicommutation] \label{THM reduction}
Let $K, L_1, L_2$ be given by \eqref{K} and \eqref{L1, L2} with $\mathcal{b}_j, \mathcal{c}_j, k$ smooth in $[-2,2]$. Assume $k$ is nontrivial, (A) holds, and $k$ is analytic at $0$, but not identically zero near $0$. Then \eqref{C1} implies either $L_1 = L_2$ or $L_1 = -L_2$.
\end{thm}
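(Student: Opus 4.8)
The plan is to work entirely with the pointwise identity \eqref{R1}, which under the boundary conditions in \eqref{L1, L2} is equivalent to \eqref{C1}, and to squeeze out of it that the difference or the sum of the two operators must vanish. The first step is to use (A) to remove the conjugates: differentiating $k(-z)=\overline{k(z)}$ gives $\overline{k'(z)}=-k'(-z)$ and $\overline{k''(z)}=k''(-z)$, so \eqref{R1} becomes the conjugate-free identity
\begin{equation*}
\mathcal{b}_1(y)k''(-z)+\mathcal{b}_1'(y)k'(-z)+\mathcal{c}_1(y)k(-z)=\mathcal{b}_2(y+z)k''(z)+\mathcal{b}_2'(y+z)k'(z)+\mathcal{c}_2(y+z)k(z).
\end{equation*}
I would then pass to the sum and difference coefficients $\mathcal{b}_\pm=\mathcal{b}_1\pm\mathcal{b}_2$, $\mathcal{c}_\pm=\mathcal{c}_1\pm\mathcal{c}_2$, so that the two alternatives in the conclusion read $L_1=L_2\iff(\mathcal{b}_-,\mathcal{c}_-)\equiv 0$ and $L_1=-L_2\iff(\mathcal{b}_+,\mathcal{c}_+)\equiv 0$. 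The theorem is thus equivalent to the statement that, for nontrivial $k$, one of these two pairs vanishes identically.

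Second, I would determine the \emph{form} of the coefficients. Since $k$ is analytic and not identically zero near $0$, differentiating the conjugate-free identity $n$ times in $z$ and setting $z=0$ yields a hierarchy of identities in $y$. The left coefficients $\mathcal{b}_1,\mathcal{c}_1$ are $z$-independent while the right ones carry the shift $y+z$, so the $n$-th identity links $\mathcal{b}_2^{(n+1)},\mathcal{b}_2^{(n)},\mathcal{c}_2^{(n)}$ with $\mathcal{b}_1,\mathcal{b}_1',\mathcal{c}_1$ weighted by the Taylor coefficients $k^{(j)}(0)$. Using the lowest identities to express $\mathcal{b}_1,\mathcal{c}_1$ through $\mathcal{b}_2,\mathcal{c}_2$ (the nonvanishing of some $k^{(j)}(0)$ makes this possible), the remaining ones close up into constant-coefficient linear ODEs for $\mathcal{b}_2,\mathcal{c}_2$. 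Hence every coefficient function is a finite combination of terms $y^m e^{\lambda y}$ over a common finite set of exponents.

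Third --- the heart of the matter, and precisely the mechanism announced in the Remark --- I would exploit the self-adjoint symmetry. Grouping the conjugate-free identity by the linearly independent $e^{\lambda y}$ (and by powers of $y$) reduces it to finitely many scalar functional equations in $z$. For a leading exponent $\lambda$, the twist $f(z)=e^{\lambda z/2}k(z)$ makes the surviving relation exactly symmetric, of the shape $\beta_1 f''(-z)+\tilde{\mathcal{c}}_1 f(-z)=\beta_2 f''(z)+\tilde{\mathcal{c}}_2 f(z)$, with $\beta_j,\tilde{\mathcal{c}}_j$ the relevant leading coefficients. Splitting $f=f_{\mathrm{e}}+f_{\mathrm{o}}$ into its even and odd parts decouples this into
\begin{equation*}
(\beta_1-\beta_2)f_{\mathrm{e}}''+(\tilde{\mathcal{c}}_1-\tilde{\mathcal{c}}_2)f_{\mathrm{e}}=0,\qquad (\beta_1+\beta_2)f_{\mathrm{o}}''+(\tilde{\mathcal{c}}_1+\tilde{\mathcal{c}}_2)f_{\mathrm{o}}=0,
\end{equation*}
so the even equation is governed by the \emph{difference} data (the obstruction to $L_1=L_2$) and the odd equation by the \emph{sum} data (the obstruction to $L_1=-L_2$).

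Finally I would argue by contradiction. If both the difference and the sum leading data are nonzero, then $f_{\mathrm{e}}$ and $f_{\mathrm{o}}$ each satisfy a nontrivial second order constant-coefficient ODE, so both are exponential-polynomials; then $f$, and hence $k=e^{-\lambda z/2}f$, is an exponential-polynomial, i.e.\ trivial in the sense of Definition~\ref{trivial DEF}, contradicting the hypothesis. Thus for nontrivial $k$ one of the two leading data must vanish. I expect the genuine difficulty to be converting this leading-order dichotomy into the \emph{global} statement that an entire pair $(\mathcal{b}_-,\mathcal{c}_-)$ or $(\mathcal{b}_+,\mathcal{c}_+)$ vanishes: this requires an inductive descent through the exponents and powers of $y$, together with a careful treatment of the degenerate cases in which a leading coefficient vanishes, in which $\lambda=0$, or in which several exponents collide, so as to rule out accidental cancellations among the $y^m e^{\lambda y}$ terms.
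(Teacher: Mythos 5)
Your proposal follows essentially the same route as the paper: after removing conjugates via (A), the paper also writes the coefficients as exponential-polynomials (using the ODEs derived in its Section 4), twists by $k(z)=\kappa(z)e^{-\lambda z/2}$, and observes that the relation for the top power $y^5 e^{\lambda y}$ decouples into an even-part ODE governed by the difference data $b_5-\tilde b_5$ and an odd-part ODE governed by the sum data $b_5+\tilde b_5$, so that $b_5\neq\pm\tilde b_5$ forces $k$ to be trivial. The inductive descent through the lower powers of $y$ and the other exponents, which you correctly flag as the remaining work, is exactly what the paper then carries out to upgrade the leading-order dichotomy to $L_1=L_2$ or $L_1=-L_2$.
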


\vspace{.1in}

\begin{remark} \label{REM multiplier sesqui}
Let $M$ be the multiplication operator by $z \mapsto e^{\tau z}$ with $\tau \in i\RR$, then $M K M^{-1}$ is a finite convolution operator with kernel $k(z) e^{\tau z}$ (where $k$ is the kernel of $K$), which is also self-adjoint since so is $K$. If $K$ sesquicommutes with $L$, i.e. $\overline{K}L = L K$, then $M K M^{-1}$ sesquicommutes with $M^{-1} L M^{-1}$. With this observation the results of Theorem~\ref{THM sesqui comm} are stated up to multiplication of $k$ by $e^{\tau z}$, i.e. we chose a convenient constant $\tau$ in order to more concisely state the results.
\end{remark}

\begin{thm}[$L_1 = L_2$] \label{THM sesqui comm}
Let $K, L_1, L_2$ be given by \eqref{K} and \eqref{L1, L2}, with $L_1 = L_2$ and let their coefficient functions be $\mathcal{b}$ and $\mathcal{c}$. Let $\mathcal{b}, \mathcal{c}, k$ be smooth in $[-2,2]$. Further, assume $k$ is nontrivial, (A) holds, $k$ is analytic at $0$, but not identically zero near $0$. Then \eqref{C1} implies (all the used parameters are real, unless stated otherwise)

\begin{enumerate}

\item $\displaystyle k(z) = \frac{\gamma \sinh \mu z}{\mu \sinh \gamma z}$.

\begin{equation*}
\begin{cases}
\displaystyle \mathcal{b}(y) =  \tfrac{1}{2\gamma^2} \left[\cosh(2 \gamma y) - \cosh(2 \gamma) \right],
\\[.1in]
\displaystyle \mathcal{c}(y)= (\gamma^2 - \mu^2) \mathcal{b}(y) + c_0 ,
\end{cases}
\end{equation*}

\noindent where $\mu \in \RR \cup i\RR$ and $c_0 \in \CC$.

\item $\displaystyle k(z)= \alpha e^{-i \mu z} + \frac{\sin \mu z}{z}, \ \ \alpha \neq 0$ and

\begin{equation*}
\begin{cases}
\displaystyle \mathcal{b}(y) = y^2-1,
\\[.1in]
\displaystyle \mathcal{c}(y) = i \mu \mathcal{b}'(y) + \mu^2 \mathcal{b}(y) + \tfrac{\mu}{\alpha}.
\end{cases}
\end{equation*}

\item $\displaystyle k(z) = \frac{ \sinh(2\mu_2) \sinh(\mu_1 z) e^{-\frac{i\pi}{4}z} + \sinh(2\mu_1) \sinh(\mu_2 z) e^{\frac{i\pi}{4}z} }{\mu_1 \mu_2 \sin \frac{\pi z}{2}}$ and

\begin{equation} \label{b, c sesq}
\begin{cases}
\displaystyle \mathcal{b}(y) = -\cos  \tfrac{\pi y}{2},
\\[.1in]
\displaystyle \mathcal{c}(y) =  i \tfrac{\mu_2^2 - \mu_1^2}{\pi} \mathcal{b}'(y) - \left( \tfrac{\pi^2}{16} + \tfrac{\mu_1^2 + \mu_2^2}{2} \right) \mathcal{b}(y) ,
\end{cases}
\end{equation}
where $\mu_1, \mu_2 \in \RR \cup i\RR$. In the special case  $\mu_1 =i \mu$; \ $\mu_2 =i( \mu \pm \frac{\pi}{2})$ with $\mu \in \RR$, to $\mathcal{c}(y)$ a complex multiple of $e^{-2i(\frac{\pi}{4} \pm \mu)y}$ can be added.
\end{enumerate}
\end{thm}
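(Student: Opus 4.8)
The plan is to assume, via Theorem~\ref{THM reduction}, that $L_1 = L_2 = L$ with coefficients $\mathcal{b},\mathcal{c}$, so that the functional relation \eqref{R1} holds with all coefficient functions equal. Since $k$ is analytic at $0$, the natural strategy is to differentiate \eqref{R1} repeatedly in $z$ and set $z=0$, converting the two-variable relation into a hierarchy of linear ODEs in $y$ for $\mathcal{b}$ and $\mathcal{c}$ whose coefficients are the Taylor data $a_n := k^{(n)}(0)$. Assumption (A) gives $\overline{k^{(n)}(z)} = (-1)^n k^{(n)}(-z)$, hence $\overline{a_n}=(-1)^n a_n$: the $a_n$ are real for even $n$ and purely imaginary for odd $n$. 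This parity makes the $z=0$ evaluation of \eqref{R1} vanish identically, so the first nontrivial relation comes from one $z$-derivative, and a direct computation gives
\[
a_0 \mathcal{c}' + 2 a_1 \mathcal{c} = -a_1 \mathcal{b}'' - 3 a_2 \mathcal{b}' - 2 a_3 \mathcal{b}.
\]
Further $z$-derivatives bring in $\mathcal{b}''',\mathcal{b}^{(4)},\dots$ and $\mathcal{c}'',\mathcal{c}''',\dots$, each weighted by higher $a_n$.

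The key step is to eliminate $\mathcal{c}$ from this hierarchy and obtain a closed, constant-coefficient ODE for the top coefficient $\mathcal{b}$. I expect this to collapse to $\mathcal{b}''' = 4\gamma^2 \mathcal{b}'$ for a constant $\gamma^2$ expressed through the $a_n$ (permitting $\gamma=0$ and $\gamma\in i\RR$), so that $\mathcal{b}$ is a combination of $1$, $e^{2\gamma y}$, $e^{-2\gamma y}$. Imposing the boundary conditions $\mathcal{b}(\pm1)=0$ from \eqref{L1, L2}, together with the symmetry and the scaling/normalization freedom of Remark~\ref{REM multiplier sesqui}, then selects exactly the three profiles in the statement: the hyperbolic $\mathcal{b}(y)=\tfrac{1}{2\gamma^2}\left[\cosh(2\gamma y)-\cosh(2\gamma)\right]$ for $\gamma\neq0$, its polynomial degeneration $\mathcal{b}(y)=y^2-1$ at $\gamma=0$, and the trigonometric $\mathcal{b}(y)=-\cos\tfrac{\pi y}{2}$ at the special value $2\gamma = i\tfrac{\pi}{2}$ where $\cosh(2\gamma)=0$ so the boundary conditions become degenerate. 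With $\mathcal{b}$ fixed, the displayed first-order relation determines $\mathcal{c}$: in Case 1 (even $k$, so $a_1=a_3=0$) it gives $\mathcal{c} = -\tfrac{3a_2}{a_0}\mathcal{b}+c_0 = (\gamma^2-\mu^2)\mathcal{b}+c_0$, and in the other cases the nonzero $a_1$ produces the extra $\mathcal{b}'$ terms recorded there.

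The final step is to substitute the now-explicit exponential-polynomial $\mathcal{b},\mathcal{c}$ back into \eqref{R1} and solve for $k$ itself. Because the coefficients are finite combinations of $y^j e^{\lambda_l y}$, collecting \eqref{R1} by these linearly independent functions of $y$ separates it into a small system of constant-coefficient ODEs for $k(z)$ (and, through (A), for $k(-z)$). Solving each branch yields the candidate kernels, and translating the answer back by multiplication with $e^{\tau z}$, $\tau\in i\RR$, as in Remark~\ref{REM multiplier sesqui}, should produce precisely the kernels $\frac{\gamma\sinh\mu z}{\mu\sinh\gamma z}$, $\alpha e^{-i\mu z}+\frac{\sin\mu z}{z}$, and the two-term ratio of Case~3.

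I anticipate the main obstacle to lie not in deriving $\mathcal{b}$ but in the final classification, specifically at the resonant value $2\gamma=i\tfrac{\pi}{2}$. There the functional equation for $k$ acquires an enlarged solution space, forcing the two-parameter family $(\mu_1,\mu_2)$ of Case~3 and, in the further degenerate sub-case $\mu_1=i\mu$, $\mu_2=i(\mu\pm\tfrac{\pi}{2})$, the freedom to add a multiple of $e^{-2i(\frac{\pi}{4}\pm\mu)y}$ to $\mathcal{c}$; these resonances must be isolated and checked by hand. Throughout, one must verify which reality/parity constraints on $\mu,\mu_1,\mu_2$ survive so that $K$ remains self-adjoint, and systematically discard every branch on which $k$ reduces to a finite combination of exponentials or to $e^{\alpha z}p(z)$, i.e.\ becomes trivial in the sense of Definition~\ref{trivial DEF}. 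This bookkeeping, rather than any single computation, is where the bulk of the effort will be.
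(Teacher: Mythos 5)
Your overall architecture matches the paper's: differentiate \eqref{R1} in $z$ at $z=0$ to obtain constant-coefficient relations for $\mathcal{b}$ and $\mathcal{c}$ (your first-order relation for $\mathcal{c}$ is exactly \eqref{c relation when k0 not 0}), conclude that $\mathcal{b},\mathcal{c}$ are exponential polynomials, substitute back into \eqref{R1}, separate by the linearly independent functions $y^j e^{\lambda y}$, and solve the resulting relations for $k$. The gap is in the middle step. Eliminating $\mathcal{c}$ does \emph{not} produce the third-order equation $\mathcal{b}'''=4\gamma^2\mathcal{b}'$ you posit; what the elimination actually yields (Section~\ref{Sesqui comm SECTION}) is a linear constant-coefficient equation of order up to four, $\mathcal{b}^{(4)}+\sum_{j=0}^{3}\alpha_j\mathcal{b}^{(j)}=\alpha_4$, whose characteristic roots are a priori arbitrary complex numbers, possibly repeated. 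That the admissible root configurations reduce to exactly $\{0,0,0\}$, $\{0,\pm 2\gamma\}$ with $\gamma\in\RR$, and $\{\pm i\pi/2\}$ (after the normalization of Remark~\ref{REM multiplier sesqui}) is the \emph{conclusion} of the hardest part of the proof, not an input to it; as written, you have assumed the answer.

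To close the gap one needs the machinery of Sections~\ref{SECT single mode and mult}--\ref{SECT multiple modes}: for each mode $\lambda$ write $k(z)=\kappa(z)e^{\lambda z/2}$ and split $\kappa$ into even and odd parts. The top-degree relation in $y$ fixes only $\kappa_-$ (as $\alpha z$ or $\alpha\sinh\mu z$); for $\Re\lambda=0$ the even part $\kappa_+$ remains a free real even function, while for $\Re\lambda\neq 0$ it is pinned down by self-adjointness via \eqref{kappa_+ in terms of kappa_-}. This is also why your claim that collecting \eqref{R1} by powers of $y$ yields ``a small system of constant-coefficient ODEs for $k$'' is too optimistic: the relations couple $k(z)$ with $k(-z)$, and a single purely imaginary mode of multiplicity one determines essentially nothing about $\kappa_+$. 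The kernel is determined only by playing distinct modes and higher multiplicities against one another, and one must then exclude, case by case, every other configuration (multiplicity $\geq 2$ for a non-imaginary mode, multiplicity $\geq 4$ for an imaginary one, three distinct modes of either type, mixed configurations with a repeated mode, \dots) via Lemmas~\ref{type 2 mult>1 impossible LEMMA}, \ref{type 1 mult > 3 impossible LEMMA}, \ref{two type 1 roots with one mult>1 LEMMA}, \ref{type 1 with mult>1 and type 2 LEMMA} and Corollaries~\ref{CORO 3 type 1 modes impossible}, \ref{CORO 3 type 2 modes impossible}. None of this exclusion work is present in, or replaceable by, the bookkeeping you sketch, so the proposal does not establish that the three listed cases are exhaustive.
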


\begin{remark} \mbox{}
\begin{enumerate}

\item[(i)] In items 1 and 3, if $\mu, \mu_j$ or $\gamma =0$, one takes appropriate limits. Note that $k$ can be multiplied by arbitrary real constant and  $L_1=L_2$ by a complex one.

\item[(ii)] Using the same proof techniques one can easily check that under the given assumptions of the theorem, no kernel would satisfy the sesquicommutation relation, when $L_1=L_2$ is a first order operator.

\item[(iii)] In item 1, $K$ is real valued and self-adjoint, in particular sesquicommutation reduces to commutation and we recover Morrison's result.

\item[(iv)] Widom's theory of asymptotics of eigenvalues applies only if
  $k(z)$ has an even extension to $\RR$ such that $\hat{k}(\xi)$ is
  nonnegative and monotone decreasing, at least when $\xi \to \infty$. Item 2
  corresponds to $\hat{k}(\xi)$ being a characteristic function of an interval
  plus a delta-function, centered anywhere one likes. Item 3 is the most
  puzzling, it is unknown if there is an extension whose Fourier transform is nonnegative and monotone decreasing. Item 1 are all even kernels.
\end{enumerate}

\end{remark}

\vspace{.2in}

\noindent From the discussion in the introduction we immediately obtain:

\begin{corollary} \label{CORO eigenfunctions}
Let $K$ be one of the operators of Theorem~\ref{THM sesqui comm} and let $L$ be corresponding operator that sesquicommutes with it (i.e. $\overline{K}L = L K$), then $L^*L$ commutes with $K$. In particular, the eigenfunctions of $K$ are eigenfunctions of the fourth order self-adjoint differential operator $L^*L$. Moreover, if eigenspaces of $K$ are one-dimensional, then eigenfunction $u$ of $K$ satisfies second order differential equation $Lu = \sigma \overline{u}$ for some $\sigma \in \CC$.
\end{corollary}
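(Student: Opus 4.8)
The plan is to read off the three assertions directly from the operator identities established in the Introduction, specialized to the present setting: by Theorem~\ref{THM sesqui comm} we have $L_1 = L_2 =: L$, and by hypothesis (A) the operator $K$ is self-adjoint, $K^* = K$. I would first establish the commutation $K L^* L = L^* L K$. Transposing the sesquicommutation relation $\overline{K} L = L K$, using $L^T = L$, and then taking adjoints yields $K L^* = L^* \overline{K}$ (this is exactly the identity $K L_1^* = L_2^* \overline{K}$ recorded in the Introduction, with $L_1 = L_2 = L$). Composing the two relations gives
\begin{equation*}
K L^* L = L^* \overline{K} L = L^* L K ,
\end{equation*}
so $L^* L$ commutes with $K$, and hence, since $K = K^*$, with $K^*$ as well.

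Next I would produce the common eigenbasis. As $k \in L^2$, the operator $K$ is Hilbert--Schmidt, hence compact; being self-adjoint it admits an orthonormal eigenbasis, and each eigenspace $E_\lambda = \ker(K - \lambda I)$ is finite dimensional with $\lambda \in \RR$. Because $L^* L$ commutes with $K$, every $E_\lambda$ is invariant under $L^* L$. Now $L^* L = \overline{L}\, L$ is a fourth order differential expression (note $L^* = \overline{L^T} = \overline{L}$ since $L^T = L$), self-adjoint and nonnegative on the domain cut out by the boundary conditions in \eqref{L1, L2}. Restricting the self-adjoint operator $L^* L$ to the finite-dimensional invariant subspace $E_\lambda$ and diagonalizing it there, and then collecting the resulting bases over all $\lambda$, gives an eigenbasis of $K$ consisting of eigenfunctions of the fourth order self-adjoint operator $L^* L$.

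For the one-dimensional case I would argue through $\overline{K}$ rather than through $K^* K$. Suppose $E_\lambda = \CC u$ with $K u = \lambda u$, $\lambda \in \RR$. Conjugating gives $\overline{K}\,\overline{u} = \lambda \overline{u}$, so $\overline{u}$ spans the eigenspace $\overline{E_\lambda}$ of $\overline{K}$ for the eigenvalue $\lambda$, which is again one-dimensional. Applying sesquicommutation to $u$,
\begin{equation*}
\overline{K}(L u) = L(K u) = \lambda\, L u ,
\end{equation*}
shows that $L u$ also lies in $\overline{E_\lambda}$. Therefore $L u$ is a scalar multiple of $\overline{u}$, i.e. $L u = \sigma \overline{u}$ for some $\sigma \in \CC$, as claimed.

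The computations here are short, so I do not expect a serious obstacle; the points that require care are functional-analytic. The first is the justification of the simultaneous diagonalization, which rests on compactness of $K$, self-adjointness of $L^* L$ on the domain determined by \eqref{L1, L2}, and invariance of the finite-dimensional eigenspaces $E_\lambda$. The second, more subtle point is that in the one-dimensional statement the hypothesis is on the eigenspaces of $K$, so the argument should be routed through $\overline{K}$: passing instead through $K^* K = K^2$ would be inexact, since an eigenspace of $K^2$ for $\lambda^2$ may be two-dimensional when both $\pm\lambda \in \mathrm{spec}(K)$, even though each eigenspace of $K$ is one-dimensional. I expect the diagonalization step to be the only genuinely delicate issue.
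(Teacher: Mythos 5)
Your proof is correct, and for the first two assertions it is exactly the paper's argument: the paper derives the corollary ``immediately'' from the identities in the Introduction, namely $K L^* = L^* \overline{K}$ (obtained by transposing and taking adjoints) composed with $\overline{K} L = L K$ to give $K L^* L = L^* L K$, and then invariance of the finite-dimensional eigenspaces of the compact self-adjoint $K$ under the symmetric operator $L^* L$. The one place you genuinely depart from the paper is the last claim: the Introduction routes it through $K^* K$ and its singular functions (if the eigenspace of $K^* K$ is one-dimensional then $L_1 u = \sigma\overline{u}$), whereas you apply the sesquicommutation relation directly to an eigenfunction of $K$ and use that the $\lambda$-eigenspace of $\overline{K}$ is the conjugate of $E_\lambda$. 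Your version matches the hypothesis of the corollary (one-dimensionality of eigenspaces of $K$, not of $K^* K = K^2$) more faithfully, and it sidesteps exactly the degeneracy you point out, that $\ker(K^2-\lambda^2 I)$ may be two-dimensional when both $\pm\lambda$ are eigenvalues; so your route is, if anything, the cleaner one for this self-adjoint setting. No gaps.
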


\begin{remark}
The example mentioned in Remark~\ref{REM example sesqui} in the introduction is obtained from item 3 of Theorem~\ref{THM sesqui comm} by choosing $\mu_2 = 0, \ \mu_1 = \frac{i \pi}{4}$.
\end{remark}

\begin{thm}[$L_1 = -L_2$] \label{THM sesqui comm L2=-L1}
Let $K, L_1, L_2$ be given by \eqref{K} and \eqref{L1, L2}, with $L_1 = -L_2$ and let the coefficients of $L_1$ be $\mathcal{b}$ and $\mathcal{c}$. Let $\mathcal{b}, \mathcal{c}, k$ be smooth in $[-2,2]$. Further, assume (A) holds, $k$ is analytic at $0$, but not identically zero near $0$. If \eqref{C1} holds true, then $k$ is trivial.

\end{thm}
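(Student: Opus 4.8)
The plan is to specialize the master relation \eqref{R1} to $\mathcal{b}_2 = -\mathcal{b}_1 =: -\mathcal{b}$ and $\mathcal{c}_2 = -\mathcal{c}_1 =: -\mathcal{c}$, and then use assumption (A) to eliminate the conjugates. Since (A) gives $\overline{k(z)} = k(-z)$, $\overline{k'(z)} = -k'(-z)$ and $\overline{k''(z)} = k''(-z)$ on the real axis, \eqref{R1} collapses to the single-sign identity
\begin{equation*}
\mathcal{b}(y)k''(-z) + \mathcal{b}(y+z)k''(z) + \mathcal{b}'(y)k'(-z) + \mathcal{b}'(y+z)k'(z) + \mathcal{c}(y)k(-z) + \mathcal{c}(y+z)k(z) = 0 .
\end{equation*}
The decisive structural feature, and what distinguishes this case from $L_1 = L_2$, is that evaluation at $z=0$ no longer degenerates into $0=0$ but produces the \emph{algebraic} relation $\mathcal{b}(y)k''(0) + \mathcal{b}'(y)k'(0) + \mathcal{c}(y)k(0) = 0$. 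After normalizing $k'(0)=0$ via the multiplier freedom of Remark~\ref{REM multiplier sesqui} (legitimate since $k\in L^2$ and (A) is preserved), this reads $\mathcal{c} = \rho\,\mathcal{b}$ with a \emph{constant} $\rho = -k''(0)/k(0)$ and, crucially, no $\mathcal{b}'$ term. This rigidity is the whole point: in the $L_1=L_2$ analysis the analogous relation is differential and carries a free integration constant, which is exactly the slack producing the nontrivial families of Theorem~\ref{THM sesqui comm}.

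Next I would pin down $\mathcal{b}$. Differentiating the identity above in $z$ and evaluating at $0$ (using that even/odd Taylor coefficients of $k$ are real/imaginary by (A)), the lowest orders merely reproduce the algebraic relation and its derivative, while the $z^2$-order yields a second-order constant-coefficient ODE for $\mathcal{b}$ in which the third-derivative terms cancel. Together with the boundary conditions $\mathcal{b}(\pm1)=0$, the only nonzero solutions are trigonometric, $\mathcal{b}\propto\cos\Omega y$ or $\sin\Omega y$ with $\Omega\in\tfrac{\pi}{2}+\pi\ZZ$ or $\Omega\in\pi\ZZ$ respectively; purely real exponentials are annihilated by the two boundary conditions, and a double root forces $\mathcal{b}\equiv0$. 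The degenerate branches, namely $k(0)=0$ or a vanishing leading coefficient so that the ODE drops order, I would dispatch separately, showing they either force $\mathcal{b}\equiv0$ (hence $L=0$) or again yield trivial $k$.

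With $\mathcal{b}=Ae^{\lambda_1 y}+Be^{\lambda_2 y}$ and $\mathcal{c}=\rho\mathcal{b}$ in hand, I would substitute back and split in $y$. Writing $\phi = k''+\rho k$ and $\psi = k'$, the identity becomes $\mathcal{b}(y)\phi(-z)+\mathcal{b}(y+z)\phi(z)+\mathcal{b}'(y)\psi(-z)+\mathcal{b}'(y+z)\psi(z)=0$, and linear independence of $\{e^{\lambda_j y}\}$ forces, for each root, $G_j(-z)=-e^{\lambda_j z}G_j(z)$ with $G_j = k''+\lambda_j k'+\rho k$; equivalently $e^{\lambda_j z/2}G_j(z)$ is odd, the twist anticipated in the remark following \eqref{R1}. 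The two root-conditions then \emph{separate} into independent equations for the even and odd parts of $k$: each reduces, after the substitution $k^{\mathrm{even}}=v/\cosh(\lambda z/2)$ (resp.\ $k^{\mathrm{odd}}=v/\sinh(\lambda z/2)$), to $v''+(\rho-\lambda^2/4)v=0$, so that $k^{\mathrm{even}}=(c_1e^{\nu z}+c_2e^{-\nu z})/\cosh(\lambda z/2)$ and $k^{\mathrm{odd}}=(d_1e^{\nu z}+d_2e^{-\nu z})/\sinh(\lambda z/2)$ with $\nu^2=\lambda^2/4-\rho$.

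The hard part, and the step where the antisymmetry $L_1=-L_2$ finally bites, is the smoothness bookkeeping in this last display. Because $\Omega\geq\pi/2$, the denominators $\cos(\Omega z/2)$ and $\sin(\Omega z/2)$ vanish inside the \emph{closed} interval $[-2,2]$; analyticity at $0$ already kills the odd part (its denominator has a pole at $z=0$), and smoothness on $[-2,2]$ then demands that the even numerator cancel the remaining interior and endpoint poles of $\cos(\Omega z/2)$. But parity forces this numerator to be an \emph{even} function $\cosh\nu z$, with no $\mathcal{b}'$-freedom to tilt it precisely because $\mathcal{c}=\rho\mathcal{b}$ exactly, and I expect to show that such a symmetric numerator can annihilate those poles only for the special values of $\nu$ at which $\cosh\nu z/\cosh(\lambda z/2)$ collapses to a finite trigonometric polynomial (for instance $\cos(3\pi z/4)/\cos(\pi z/4)=2\cos(\pi z/2)-1$), that is, to a finite combination of exponentials. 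Hence every smooth $k$ is trivial. The main obstacle is making this exhaustive: ruling out pole-cancellation for every admissible $\Omega$ and every configuration of $\lambda,\nu,\rho$ (including the degenerate cases above), thereby confirming that the rigidity absent from the $L_1=L_2$ case leaves no room for a genuinely non-finite-rank kernel.
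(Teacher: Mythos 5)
Your overall strategy is the paper's: specialize \eqref{R1} to the single-sign identity, observe that the roles of even and odd parts are swapped relative to $L_1=L_2$ (here the \emph{even} part of the twisted kernel $e^{\lambda z/2}(k''+\lambda k'+\rho k)$ is killed, equivalently $\kappa_+''-\mu^2\kappa_+=0$ for $\kappa=ke^{\lambda z/2}$, which matches the paper's computation exactly), reduce $\mathcal{b}$ to two exponential modes, and finish by playing smoothness of $k$ on $[-2,2]$ against the zeros of a trigonometric denominator. Your observation that the $z=0$ evaluation is now nondegenerate and yields the algebraic relation $k_0\mathcal{c}+k_1\mathcal{b}'+k_2\mathcal{b}=0$ is correct and is a clean way to see the rigidity.

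There is, however, a genuine gap at the step where you claim the boundary conditions force $\mathcal{b}\propto\cos\Omega y$ or $\sin\Omega y$, i.e.\ purely imaginary modes. For $\mathcal{b}=Ae^{\lambda_1 y}+Be^{\lambda_2 y}$ the two conditions $\mathcal{b}(\pm1)=0$ force only $\lambda_1-\lambda_2\in i\pi\ZZ\setminus\{0\}$; they are perfectly compatible with a common nonzero real part, e.g.\ $\mathcal{b}(y)=e^{\gamma y}\sin\bigl(\tfrac{\pi n}{2}(y-1)\bigr)$ with $\gamma\neq0$. Your remark that ``purely real exponentials are annihilated'' disposes only of two real roots, not of a conjugate-type pair with equal nonzero real part, and your entire subsequent analysis (denominators $\cos(\Omega z/2)$, $\sin(\Omega z/2)$) presupposes $\lambda\in i\RR$. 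The paper spends a dedicated argument on this case: when $\Re\lambda\neq0$ the symmetry (A) leaves no free function, $k$ is completely determined by each mode as a ratio with denominator $\sinh(2\gamma z)$, and matching the two modes' formulas forces $\beta_1=\beta_2$ via an asymptotic comparison along the imaginary axis, contradicting $\lambda_1\neq\lambda_2$. You must either supply such an argument or prove that the characteristic polynomial of your second-order ODE for $\mathcal{b}$ has the reality structure (coefficient of $\mathcal{b}'$ purely imaginary, constant term real, using that $k_{2j}\in\RR$ and $k_{2j+1}\in i\RR$ by (A)) that excludes equal nonzero real parts; neither is done. Two smaller points: solving the two mode-conditions simultaneously does not give your displayed forms with denominators $\cosh(\lambda z/2)$ and $\sinh(\lambda z/2)$ for a single $\lambda$ --- the $2\times2$ elimination produces the denominator $\sin\bigl(\tfrac{\lambda_1-\lambda_2}{2i}z\bigr)=\pm\sin(\pi nz/2)$, as in the paper's final formula, and smoothness at $z=0$ yields $\alpha_2=-\alpha_1$ rather than killing the odd part outright; and the concluding exhaustive pole-cancellation is only announced (``I expect to show\dots''), whereas it is the substance of the endgame.
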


\section{Relations for coefficients} \label{Sesqui comm SECTION}

In this section we consider \eqref{C1} with $L_1, L_2$ given by \eqref{L1, L2}. We assume (A) holds, $k$ is analytic at $0$, but not identically zero near $0$ and finally $k$ is not of the form $e^{\alpha z}$. We aim to find the relations that the coefficient functions $\mathcal{b}_j, \mathcal{c}_j$ must satisfy. Write $k(z) = \sum_{n=0}^\infty \frac{k_n}{n!} z^n$ near $z=0$. The $n$-th derivative of \eqref{R1} w.r.t. $z$ at $z=0$ gives

\begin{equation} \label{nth derivative of sesqui rel}
(-1)^n [\mathcal{b}_1 k_{n+2} +\mathcal{b}_1' k_{n+1} +\mathcal{c}_1 k_n] - \sum_{j=0}^n C_j^n \mathcal{b}_2^{(n-j)} k_{j+2} - \sum_{j=0}^n C_j^n \mathcal{b}_2^{(n-j+1)} k_{j+1} - \sum_{j=0}^n C_j^n \mathcal{c}_2^{(n-j)} k_{j} = 0 ,
\end{equation}

\noindent where $C_j^n = {n \choose j}$, when $n=0$ we get

\begin{equation*}
\begin{split}
k_1 (\mathcal{b}_1' - \mathcal{b}_2') + k_2 (\mathcal{b}_1 - \mathcal{b}_2) + k_0 (\mathcal{c}_1 - \mathcal{c}_2)=0 .
\end{split}
\end{equation*}

\noindent $\bullet$ If $k_0=k_1=0$, then let us show that $k$ is trivial. Assume first $\mathcal{b}_1 \neq \pm \mathcal{b}_2$, then clearly $k_2 = 0$. Let us prove by induction that all $k_j = 0$, which contradicts to the assumption that $k$ doesn't vanish near $0$. Assume $k_j = 0$ for $j=0,...,m$, then \eqref{nth derivative of sesqui rel} for $n=m-1$ reads $\left[ (-1)^{m-1} \mathcal{b}_1 - \mathcal{b}_2 \right] k_{m+1} = 0$, therefore $k_{m+1} = 0$. Let now $\mathcal{b}_1 = \mathcal{b}_2$, assume for the induction step that $k_j = 0$ for $j=0,...,n$, then \eqref{nth derivative of sesqui rel} reads

\begin{equation*}
\left[ (-1)^n - 1\right] k_{n+2} \mathcal{b}_1 + \left[ (-1)^n  - n-1 \right] k_{n+1} \mathcal{b}_1' = 0 .
\end{equation*}

\noindent When $n$ is odd we immediately obtain $k_{n+1} = 0$. When $n$ is even we get $(n+2) k_{n+1} \mathcal{b}_1' + 2 k_{n+2} \mathcal{b}_1 = 0$ and because of boundary conditions $\mathcal{b}_1(\pm 1) = 0$ we deduce $k_{n+1} = k_{n+2} =0$. Finally, the case $\mathcal{b}_1 = -\mathcal{b}_2$ can be done analogously.

\vspace{.1in}

\noindent $\bullet$ If $k_0 = 0, k_1 \neq 0$, by rescaling let $k_1 = 1$ and by considering $e^{-\frac{k_2}{2} z} k(z)$ instead of $k(z)$ (see Remark~\ref{REM multiplier sesqui}) we may assume $k_2 = 0$. Now, $\mathcal{b}_2(y) = \mathcal{b}_1(y) + \alpha$ for some $\alpha \in \CC$. From \eqref{nth derivative of sesqui rel} with $n=1$ we find $\mathcal{c}_2 = - \mathcal{b}_1'' - 2k_3 \mathcal{b}_1 - \mathcal{c}_1 - k_3 \alpha$. Using the obtained expressions, from the relation corresponding to $n=2$ we get 

\begin{equation} \label{c1p}
\mathcal{c}_1' = - \tfrac{1}{2} \mathcal{b}_1''' - k_3 \mathcal{b}_1' + \tfrac{k_4 \alpha}{2} .
\end{equation}

\noindent Now, \eqref{nth derivative of sesqui rel} with $n=3$ reads

$$2\mathcal{b}_1^{(4)} + k_3 \mathcal{b}_1'' -5k_4 \mathcal{b}_1' + 2 (k_3^2 - k_5) \mathcal{b}_1 + 3\mathcal{c}_1'' + \alpha (k_3^2 - k_5) = 0 .$$

\noindent Let us now replace $\mathcal{c}_1''$ using \eqref{c1p}. The result becomes an ODE for $\mathcal{b}_1$: for some constants $\alpha_j$,

\begin{equation*}
\mathcal{b}^{(4)}_1 + \sum_{j=0}^3 \alpha_j \mathcal{b}^{(j)}_1 = \alpha_4 .
\end{equation*}

\noindent $\bullet$ If $k_0 \neq 0$, by rescaling let $k_0 = 1$ and by considering $e^{-k_1 z} k(z)$ instead of $k(z)$ (see Remark~\ref{REM multiplier sesqui}) we may assume $k_1 = 0$. Note that $\mathcal{c}_2 = \mathcal{c}_1 + k_2 (\mathcal{b}_1 - \mathcal{b}_2)$, using this in \eqref{nth derivative of sesqui rel} with $n=1$, we get 

\begin{equation} \label{c_1p}
\mathcal{c}_1' = -k_3 (\mathcal{b}_1 + \mathcal{b}_2) - k_2 (2\mathcal{b}_1' + \mathcal{b}_2') .
\end{equation}

\noindent The relation for $n=2$ reads

$$-k_2 (\mathcal{b}_1'' + 2 \mathcal{b}_2'') + k_3 (\mathcal{b}_1' - 3\mathcal{b}_2') + (k_4 - k_2^2) (\mathcal{b}_1 - \mathcal{b}_2) - \mathcal{c}_1'' = 0 ,$$

\noindent and replacing $\mathcal{c}_1''$ using \eqref{c_1p} we obtain

\begin{equation*}
k_2 (\mathcal{b}_1'' - \mathcal{b}_2'') + 2 k_3 (\mathcal{b}_1' - \mathcal{b}_2') + (k_4 - k_2^2) (\mathcal{b}_1 - \mathcal{b}_2) = 0 .
\end{equation*}

\noindent Consider the following cases:

\begin{enumerate}

\item If $k_2 = k_3 = 0$, then we are going to show that $k$ is trivial. Assume first that $\mathcal{b}_1 \neq \pm \mathcal{b}_2$, so from the above equation $k_4 = 0$. Further, we see that in this case $\mathcal{c}_1 = \mathcal{c}_2 = \text{const}$. Let now $k_j = 0$ for $j=1,...,n+1$, then \eqref{nth derivative of sesqui rel} reads

\begin{equation*}
k_{n+2} \left[ (-1)^n \mathcal{b}_1 - \mathcal{b}_2 \right] = 0 ,
\end{equation*}

\noindent so $k_{n+2} = 0$ and by induction $k_j = 0$ for any $j \neq 0$, i.e. $k$ is trivial. When $\mathcal{b}_1 = \mathcal{b}_2$, or $\mathcal{b}_1 = - \mathcal{b}_2$ the result follows analogously.

\item If $k_2 = 0$ and $k_3 \neq 0$, then $\mathcal{b}_2(y) = \mathcal{b}_1(y) + \alpha e^{\tau y}$ with $\tau = -\frac{k_4}{2 k_3}$ and some $\alpha \in \CC$. From \eqref{nth derivative of sesqui rel} with $n=3$ (by replacing $\mathcal{c}_1'''$ using \eqref{c_1p}) we find 

\begin{equation*}
\mathcal{c}_1 = -\tfrac{\alpha}{2k_3} (5 \tau^2 k_3 + 4 \tau k_4 + k_5) e^{\tau y} -2 \mathcal{b}_1'' - \tfrac{5k_4}{2k_3} \mathcal{b}_1' - \tfrac{k_5}{k_3} \mathcal{b}_1  .
\end{equation*}

\noindent Finally we replace this and $\mathcal{b}_2$ in \eqref{c_1p} to obtain, for some other constants $\alpha_j$

\begin{equation*}
\mathcal{b}_1^{(3)} + \sum_{j=0}^2 \alpha_j \mathcal{b}_1^{(j)} = \alpha_3 e^{\tau y} .
\end{equation*}

\item If $k_2 \neq 0$, then $\mathcal{b}_2(y) = \mathcal{b}_1(y) + f(y)$ and $f$ solves $k_2 f'' + 2k_3 f' + (k_4 - k_2^2)f = 0$, so either $f(y) = \lambda_1 e^{\tau_1 y} + \lambda_2 e^{\tau_2 y}$ or $f(y) = (\lambda_1 y + \lambda_2) e^{\tau y}$. Using the ODE for $f$, \eqref{nth derivative of sesqui rel} for $n=3$ can be written as

\begin{equation*}
4k_2 \mathcal{b}_1''' + 6k_3 \mathcal{b}_1'' + 5k_4 \mathcal{b}_1' + 2k_5 \mathcal{b}_1 + \mathcal{c}_1''' + 3k_2 \mathcal{c}_1' + 2k_3 \mathcal{c}_1 = -k_4 f' + (k_2 k_3 - k_5) f .
\end{equation*}

\noindent Let us now replace $\mathcal{c}_1'''$ and $\mathcal{c}_1'$ in the above relation using \eqref{c_1p}. The result becomes

\begin{equation*}
\begin{split}
2k_3 \mathcal{c}_1 =& -k_2 \mathcal{b}_1''' - 4k_3 \mathcal{b}_1'' + (9k_2^2 - 5k_4) \mathcal{b}_1' + (6 k_2 k_3 - 2k_5) \mathcal{b}_1 +
\\
&+ (4k_2^2 - 2k_4 + 2\tfrac{k_3^2}{k_2}) f' + (3k_2k_3 - k_5 + \tfrac{k_3 k_4}{k_2}) f ,
\end{split}
\end{equation*}

\noindent but because $f'$ has the same form as $f$ we can rewrite the above relation as

\begin{equation*}
2k_3\mathcal{c}_1(y) = - k_2 \mathcal{b}_1''' +  \sum_{j=0}^2 \gamma_j \mathcal{b}_1^{(j)}(y) + f(y) ,
\end{equation*}

\noindent with different constants $\lambda_j$ in $f$ and $\gamma_j$ are some constants. Now if $k_3 = 0$ we got an ODE for $\mathcal{b}_1$, otherwise divide by it and substitute the obtained expression and the expression of $\mathcal{b}_2$ into \eqref{c_1p}, the result is (with different constants)

\begin{equation*}
\mathcal{b}^{(4)}_1 + \sum_{j=0}^3 \gamma_j \mathcal{b}_1^{(j)} = f(y) .
\end{equation*}

\end{enumerate}


\section{Reduction of the general case} \label{reduction SECTION}

In this section we prove Theorem~\ref{THM reduction}, i.e. if $k$ is nontrivial, then $L_1 = L_2$ or $L_1 = -L_2$. Analysis of the previous section shows that $\mathcal{b}_j, \mathcal{c}_j$ are linear combinations of polynomials multiplied with an exponential, moreover the polynomials have degree at most five. So let us consider a typical such term:

\begin{equation*}
\mathcal{b}_1(y) \leftrightarrow \left( \sum_{j=0}^5 b_j y^j \right) e^{\lambda y}, \qquad \mathcal{c}_1(y) \leftrightarrow \left( \sum_{j=0}^5 c_j y^j \right) e^{\lambda y} ,
\end{equation*}

\noindent and analogous terms in $\mathcal{b}_2, \mathcal{c_2}$ only with possibly different coefficients $\tilde{b}_j, \tilde{c}_j$ respectively. Set $k(z) = \kappa(z) e^{-\frac{\lambda}{2}z}$ and let

\begin{equation} \label{kappa+-}
\kappa_+(z) = \tfrac{1}{2} [\kappa(z) + \kappa(-z)], \qquad \kappa_-(z) = \tfrac{1}{2} [\kappa(z) - \kappa(-z)] .
\end{equation}

\noindent  Substituting the expressions for $\mathcal{b}_j, \mathcal{c}_j$ and $k(z) = e^{-\frac{\lambda}{2}z} [\kappa_+(z) + \kappa_-(z)]$ into \eqref{R1}, we obtain that a linear combination of terms $y^j e^{\lambda y}$ is zero. From linear independence we conclude that each coefficient must vanish. In particular, the relation corresponding to $y^5 e^{\lambda y}$ reads

\begin{equation*}
(b_5 - \tilde{b}_5) \kappa_+'' - \left( (b_5 - \tilde{b}_5) \tfrac{\lambda^2}{4} + \tilde{c}_5 - c_5 \right) \kappa_+ - (b_5 + \tilde{b}_5) \kappa_-'' + \left( (b_5 + \tilde{b}_5) \tfrac{\lambda^2}{4} - \tilde{c}_5 - c_5 \right) \kappa_- =0 .
\end{equation*}

\noindent Because $\kappa_+$ is even, and $\kappa_-$ is odd we can add the above relation, with $z$ replaced by $-z$, to itself. Like this we separate the above relation into two ODEs one for $\kappa_+$ and the other for $\kappa_-$:

\begin{equation*}
\begin{cases}
(b_5 - \tilde{b}_5) \kappa_+'' - \left( (b_5 - \tilde{b}_5) \tfrac{\lambda^2}{4} + \tilde{c}_5 - c_5 \right) \kappa_+ = 0,
\\
(b_5 + \tilde{b}_5) \kappa_-'' - \left( (b_5 + \tilde{b}_5) \tfrac{\lambda^2}{4} - \tilde{c}_5 - c_5 \right) \kappa_- =0 .
\end{cases}
\end{equation*}

\noindent If $b_5 \neq \pm \tilde{b}_5$, then $\kappa_+ = \cosh(\mu z)$ and $\kappa_-$ is either $z$ or $\sinh(\mu z)$ for some $\mu \in \CC$, therefore $k$ is trivial. Therefore, we consider the following cases:

\noindent $\bullet$ $b_5 = \tilde{b}_5$, then obviously $c_5 = \tilde{c}_5$ and we get $b_5 \kappa_-'' - \left( \frac{b_5 \lambda^2}{4} - c_5 \right) \kappa_- = 0$. Assume $b_5 \neq 0$, then by normalization we can make $b_5 = 1$, now with $\mu^2 = \frac{\lambda^2}{4} - c_5$

\begin{equation*}
\kappa_-(z) = 
\begin{cases}
\alpha z, &\mu = 0,
\\
\alpha \sinh (\mu z), \quad &\mu\neq 0.
\end{cases}
\end{equation*}

\noindent Using the ODE that $\kappa_-$ solves, the even part of the relation corresponding to $y^4 e^{\lambda y}$ reads

\begin{equation*}
(b_4 - \tilde{b}_4) \kappa_+'' - \left( (b_4 - \tilde{b}_4) \tfrac{\lambda^2}{4} + \tilde{c}_4 - c_4 \right) \kappa_+ = 0,
\end{equation*}

\noindent which immediately implies $b_4 = \tilde{b}_4$, and hence $c_4 = \tilde{c}_4$. Odd part of that relation is

\begin{equation*}
z\kappa_+'' + 2\kappa_+' - \mu^2 z \kappa_+ = -\tfrac{2b_4}{5} \kappa_-'' + \left(\tfrac{b_4 \lambda^2}{10} - \tfrac{2 c_4 }{5} + \lambda \right) \kappa_-.
\end{equation*}  

\noindent Making the change of variables $\kappa_+(z) = \frac{u(z)}{z}$, the left-hand side of the above relation becomes $u'' - \mu^2 u$, therefore using the expression for $\kappa_-$ and the evenness of $\kappa_+$ we find

\begin{equation*}
\kappa_+(z) = 
\begin{cases}
\alpha_1 z^2 + \alpha_0, &\mu=0,
\\
\alpha_1 \cosh(\mu z) + \alpha_0 \frac{\sinh \mu z}{z}, \quad &\mu \neq 0.
\end{cases}
\end{equation*} 

\noindent If $\kappa_+$ is given by the first formulas, then $k$ is trivial. Therefore, we assume $\mu \neq 0$ and the second formula holds. The even part of the relation for $y^3 e^{\lambda y}$ is

\begin{equation*}
\begin{split}
(-10z^2 + b_3 - \tilde{b}_3) \kappa_+'' - 20z \kappa_+' + \left[ \left( \tfrac{5\lambda^2}{2} -10c_5 \right) z^2 - (b_3-\tilde{b}_3) \tfrac{\lambda^2}{4} + c_3 - \tilde{c}_3 \right] \kappa_+ =
\\
= 4b_4 z \kappa_-'' - (b_4 \lambda^2 - 4 c_4 +10\lambda) z \kappa_-.
\end{split}
\end{equation*}

\noindent When we substitute the formulas for $\kappa_{\pm}$ and multiply the relation by $z^3$, the result has the form

\begin{equation*}
p(z) e^{\mu z} - p(-z) e^{-\mu z} = 0,
\end{equation*} 

\noindent where $p(z) = \sum_{j=0}^4 p_j z^j$, therefore by linear independence we conclude that all the coefficients of $p$ vanish, in particular one can compute that $p_0 =-2 \alpha_0 (b_3 - \tilde{b}_3)$ and $p_2 = \alpha_0 \left(-(b_3 - \tilde{b}_3) \mu^2 + (b_3 - \tilde{b}_3) \tfrac{\lambda^2}{4} + \tilde{c}_3 - c_3 \right)$, if $\alpha_0 = 0$, then obviously $k$ is trivial, so $p_0 = 0$ implies $b_3 = \tilde{b}_3$, but then $p_2 = 0$ implies $c_3 = \tilde{c}_3$. Looking at the even part of the relation coming from $y^2 e^{\lambda y}$ we obtain an analogous equation, where the polynomial $p$ may be of 5th order, but expressions of $p_0, p_2$ stay the same, only the subscripts of $b_3, \tilde{b}_3, c_3, \tilde{c}_3$ change to two. And we conclude $b_2 = \tilde{b}_2$ and $c_2 = \tilde{c}_2$. Likewise looking at the even parts of the relations coming from $y e^{\lambda y}, e^{\lambda y}$ we find $b_j = \tilde{b}_j$ and $c_j = \tilde{c}_j$ for $j=1,0$.

When we look at another term with $\left( \sum_{j=0}^5 b_j' y^j \right)e^{\lambda' y}$ in the coefficient $\mathcal{b}_1$ (and similar terms for other coefficient functions) we must have $b_5' = \tilde{b}_5'$, otherwise $k$ is trivial.

If $b_5 = 0$, the same procedure applies, we only need to relabel the coefficients in the above equations. Thus our conclusion is that $L_1 = L_2$.

\noindent $\bullet$ $b_5 = - \tilde{b}_5$, this case is analogous to the previous one and the conclusion is $L_1 = -L_2$.


\section{$L_1 = L_2$}

In this section we aim to prove Theorem~\ref{THM sesqui comm}. Item 1 (in the limiting case $\gamma = 0$) and item 2 of Theorem~\ref{THM sesqui comm} are derived in Corollary~\ref{nu=1, m=2 CORRO}. Item 1 (in the case $\gamma \neq 0$) and item 3 are derived in Sections~\ref{SECT item 1}, \ref{SECT item 3}. So let us assume the setting of Theorem~\ref{THM sesqui comm}.

The analysis in the beginning of Section~\ref{Sesqui comm SECTION} shows that $\mathcal{b}$ solves a linear homogeneous ODE with constant coefficients of order at most 4. Hence $\mathcal{b}(y)$ is a linear combination of terms like $y^l e^{\lambda_j y}$, where $\lambda_j$ (called also a \textit{mode}) is a root of fourth order polynomial. We will see that there are two major cases: $\Re \lambda_j = 0$ (\textit{type 1}) or $\Re \lambda_j \neq 0$ (\textit{type 2}). In the former case $k(z)$ is given in three possible forms featuring a free real-valued and even function (cf. \eqref{k(z) type 1}). In the latter case $k(z)$ is determined and has two possible forms (cf. \eqref{k(z) type 2}). 

In Section~\ref{SECT single mode and mult} we analyze the multiplicity of the mode $\lambda_j$, in particular type 2 mode cannot have multiplicity larger than one, as is shown in Lemma~\ref{type 2 mult>1 impossible LEMMA}, while type 1 mode can have multiplicity at most 3 as established in Lemma~\ref{type 1 mult > 3 impossible LEMMA}.

Finally, in Section~\ref{SECT multiple modes} we turn to the question of analyzing possibilities of having multiple modes, i.e. distinct roots $\lambda_j$. In Corollary~\ref{CORO 3 type 1 modes impossible} we show that having three distinct type 1 modes is impossible. In Corollary~\ref{CORO 3 type 2 modes impossible} we show that having three distinct type 2 modes is impossible. In Lemma~\ref{two type 1 roots with one mult>1 LEMMA} we show that two distinct type 1 modes with one of them having multiplicity at least 2 leads to trivial kernels. And in Lemma~\ref{type 1 with mult>1 and type 2 LEMMA} we show that having type 1 mode with multiplicity at least 2 and a type 2 mode again leads to trivial kernels. So the only cases leading to nontrivial kernels are: two type 2 and one type 1 mode all with multiplicity one analyzed in Section~\ref{SECT item 1}; and two type 1 modes with multiplicity 1 analyzed in Section~\ref{SECT item 3}.

Throughout this section, until Section~\ref{SECTION L2=-L1} we will be working with $k(-z)$ and with an abuse of notation it will be denoted by $k(z)$. We will remember about this notational abuse when collecting the results in Theorem~\ref{THM sesqui comm}. In particular \eqref{R1} becomes

\begin{equation} \label{sesqui comm rel}
\mathcal{b}(y) k''(z)-\mathcal{b}(y+z)k''(-z)-\mathcal{b}'(y) k'(z)+\mathcal{b}'(y+z)k'(-z)+\mathcal{c}(y)k(z)-\mathcal{c}(y+z)k(-z)=0 .
\end{equation}

The analysis in the beginning of the Section~\ref{Sesqui comm SECTION} shows that $\mathcal{b}$ solves a linear homogeneous ODE with constant coefficients of order at most 4, and that

\begin{equation} \label{c relation when k0 not 0}
-k_0 \mathcal{c}'(y) + 2k_1 \mathcal{c}(y) + k_1 \mathcal{b}''(y) - 3k_2 \mathcal{b}'(y) + 2k_3 \mathcal{b}(y) = 0 .
\end{equation}

\noindent So $\mathcal{b}$ has the following form

\begin{equation} \label{b(y) combination of exps}
\mathcal{b}(y) = \sum_{j=1}^\nu p_{d_j} (y) e^{\lambda_j y},
\end{equation}

\noindent where $\lambda_1,...,\lambda_\nu$ are distinct complex numbers and $p_{d_j}$ are polynomials of degree $d_j$, so that

\begin{equation*}
\nu + \sum_{j=1}^\nu d_j \leq 4.
\end{equation*}

\noindent Then $\mathcal{c}(y)$ satisfying \eqref{c relation when k0 not 0}  must also have the same form, except the polynomials are different and there could be an extra exponential term $e^{\frac{2k_1}{k_0} y}$, if $\frac{2k_1}{k_0} \notin \{ \lambda_1,...,\lambda_\nu \}$. Because we also require $\mathcal{b}(\pm 1) = 0$, then either

\begin{enumerate}
\item[I.] $\nu = 1$, \ $d_1 \geq 1$;
\item[II.] $\nu = 2$, \ $d_1 \geq 1$;
\item[III.] $\nu=2, \ d_1=d_2=0, \ \mathcal{b}(y) = e^{i\beta y} \sin (\pi n (y-1)/2)$ for some $\beta \in \RR$ and $n \geq 1$;
\item[IV.] $\nu \geq 3$.
\end{enumerate}

\subsection{Single mode and multiplicities} \label{SECT single mode and mult}

In this section we concentrate on the single mode $\lambda$ and analyze its multiplicity. So suppose $p(y) e^{\lambda y}$ is one of the terms in \eqref{b(y) combination of exps}, while $q(y) e^{\lambda y}$ is one of the terms in $\mathcal{c}(y)$. Where $p(y) = \sum_{j=0}^4 p_j y^j$ and $q(y) = \sum_{j=0}^4 q_j y^j$. We are going to show that type 2 mode cannot have multiplicity larger than one (see Lemma~\ref{type 2 mult>1 impossible LEMMA}), while type 1 mode cannot have multiplicity larger than 3 (see Lemma~\ref{type 1 mult > 3 impossible LEMMA}). Finally, here we also derive item 1 (in the limiting case $\gamma = 0$) and item 2 of Theorem~\ref{THM sesqui comm} (see  Corollary~\ref{nu=1, m=2 CORRO}).

After substitution of the corresponding expressions for $\mathcal{b}, \mathcal{c}$ into \eqref{sesqui comm rel}, we collect the coefficients of $y^j e^{\lambda y}$ and from linear independence conclude that they must be zero. Like this we obtain $5$ relations involving $k$. Let us first change the variables $k(z) = \kappa(z) e^{\lambda z /2}$, then the relation corresponding to $y^j e^{\lambda y}$ can be conveniently written as

\begin{equation} \label{5 relations}
\begin{split}
p_j \kappa''(z) - \tfrac{p^{(j)}(z)}{j!}  \kappa''(-z)+ \tfrac{p^{(j+1)}(z)}{j!}  \kappa'(-z)
-(j+1) p_{j+1} \kappa'(z) +
\\
+\tfrac{\varepsilon^{(j)}(z)}{j!} \kappa(-z) - \varepsilon_j \kappa(z)&=0,\quad j=0,\ldots,4,
\end{split}
\end{equation}

\vspace{.1in}

\noindent with the convention that $p_5 = 0$, and the notation

\begin{equation*}
\varepsilon(z) = \sum_{j=0}^4 \varepsilon_j z^j,
\qquad \varepsilon_j = \tfrac{\lambda^2 p_j}{4} - q_j + \tfrac{(j+1)}{2} \lambda p_{j+1} .
\end{equation*}

 Let $\deg(p)=m$ and $\deg(q)=n$, and $\kappa_{+}, \kappa_{-}$ be the even and odd parts of $\kappa$, respectively. If $n>m$ the relation in \eqref{5 relations} for $j=n$ reads $q_n \kappa_-(z) = 0$, so $k(z) = \kappa_+(z) e^{\lambda z /2}$, the symmetry (A) implies $\lambda = 2 i \beta$ for some $\beta \in \RR$ and that $\kappa_+$ is real valued.

Let now $n \leq m$, then \eqref{5 relations} for $j=m$ reads 

\begin{equation} \label{y^m relation and mu def}
\kappa_-''(z) - \mu^2 \kappa_-(z) = 0, \qquad \mu = \sqrt{\tfrac{\lambda^2}{4} - \tfrac{q_m}{p_m}} ,
\end{equation}

\noindent hence there are two possibilities: if $\mu=0$, then $\kappa_-(z) = \alpha z + \beta$ and if $\mu \neq 0$, then $\kappa_-(z) = \alpha e^{\mu z} + \beta e^{-\mu z}$, using that $\kappa_-$ is an odd function we conclude 

\begin{equation} \label{kappa_-}
\kappa_-(z) =
\begin{cases}
\alpha z ,  &\mu=0,
\\
\alpha \sinh(\mu z), \qquad &\mu \neq 0.
\end{cases}
\end{equation}

\noindent Thus, $k(z) = e^{\lambda z /2} \left( \kappa_+(z) + \kappa_-(z) \right)$, where $\kappa_+$ is a free even function. Now the symmetry condition (A) says

\begin{equation} \label{symmetry for kappas}
e^{\overline{\lambda} z /2} \left( \overline{\kappa_+(z)} + \overline{\kappa_-(z)} \right) = e^{-\lambda z /2} \left( \kappa_+(z) - \kappa_-(z) \right).
\end{equation}

\noindent This equation can be solved uniquely for $\kappa_+$ if and only if $\Re \lambda \neq 0$. 

If $\lambda = 2 i \beta$, then $\kappa_+$ can be arbitrary real and even function, while solvability implies that

\begin{equation} \label{k(z) type 1}
k(z) = e^{i \beta z} \left( \kappa_+(z) + 
\begin{cases}
i \alpha z, & \mu = 0
\\
i \alpha \sinh (\mu z), \quad &\mu \neq 0
\\
i \alpha \sin (\mu z), \quad &\mu \neq 0
\end{cases} \right) ,
\end{equation} 

\noindent where $\alpha, \mu \in \RR$. Observe that the case $n>m$ is included here when we take $\alpha = 0$, therefore we may assume $m \geq n$.

\begin{remark} \label{mu i*mu remark}
When $\kappa_-$ is given by the second formula of \eqref{kappa_-}, then \eqref{symmetry for kappas} implies that there are two cases, either $\alpha \in i\RR$ and $\mu \in \RR$ which gives the second formula of \eqref{k(z) type 1}, or $\alpha \in \RR$ and $\mu \in i\RR$, which gives the third one, where with the abuse of notation we denoted the imaginary part of $\mu$ again by $\mu$.  
\end{remark}

If $\lambda = 2 \gamma + 2i \beta$ with $\gamma \neq 0$, then

\begin{equation} \label{k(z) type 2}
k(z) = 
\begin{cases}
\displaystyle z e^{i \beta z} \frac{\alpha e^{-\gamma z} + \overline{\alpha} e^{\gamma z}}{\sinh(2 \gamma z)}, &\mu = 0,
\\[3ex]
\displaystyle e^{i \beta z} \frac{\alpha e^{-\gamma z} \sinh(\mu z) + \overline{\alpha} e^{\gamma z} \sinh (\overline{\mu} z)}{\sinh(2 \gamma z)}, \qquad &\mu \neq 0 ,
\end{cases}
\end{equation}

\noindent where $\alpha, \mu \in \CC$.

So far we have analyzed only one of the relations from \eqref{5 relations} and deduced the possible forms of $k$. When the mode $\lambda$ has multiplicity at least two we have $m \geq 1$, and therefore there are more relations in \eqref{5 relations} that $k$ has to satisfy (in particular the one corresponding to $j=m-1$). In the two subsections below we analyze these possibilities.

\subsubsection{Type 1 mode and multiplicities}

\begin{prop} \label{type 1 mult >=2 PROP}
Let $\Re \lambda  = 0$ and $m \geq 1$, then with $\lambda = 2i \beta$ and $\alpha, \mu, \varkappa, \kappa_0 \in \RR$ we have (in fact $\varkappa = i \alpha \omega$ with $\omega$ defined in \eqref{y^m-1 relation} below)

\begin{equation} \label{k(z) type 1 m>=1}
k(z) = e^{i \beta z} \cdot 
\begin{cases}
i \alpha z + \kappa_0 + \frac{\varkappa}{6} z^2,   & \mu = 0,
\\
i \alpha \sinh (\mu z) + \kappa_0 \frac{\sinh \mu z}{z} + \frac{\varkappa}{2 \mu} \cosh \mu z, \qquad &\mu \neq 0,
\\
i \alpha \sin (\mu z) + \kappa_0 \frac{\sin \mu z}{z} - \frac{\varkappa}{2 \mu} \cos \mu z, \qquad &\mu \neq 0.
\end{cases}
\end{equation}

\end{prop}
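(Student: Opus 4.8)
The plan is to pin down the still-free even function $\kappa_+$ by using the extra relation that becomes available precisely because $m\geq 1$, i.e.\ because the mode $\lambda=2i\beta$ has multiplicity at least two. Up to this point only the top relation \eqref{5 relations} with $j=m$ has been used, yielding $\kappa_-''-\mu^2\kappa_-=0$ and hence the form \eqref{kappa_-} for $\kappa_-$, together with $k(z)=e^{i\beta z}(\kappa_+(z)+\kappa_-(z))$ where $\kappa_+$ is even and, by \eqref{symmetry for kappas}, real. Since $m\geq1$, the relation \eqref{5 relations} with $j=m-1$ is the natural next equation to exploit.

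First I would write out that relation. Using $p^{(m-1)}(z)/(m-1)!=p_{m-1}+m p_m z$, $p^{(m)}(z)/(m-1)!=m p_m$, and $\varepsilon^{(m-1)}(z)/(m-1)!=\varepsilon_{m-1}+m\varepsilon_m z$, the $j=m-1$ instance of \eqref{5 relations} reads
\begin{equation*}
p_{m-1}\kappa''(z)-(p_{m-1}+m p_m z)\kappa''(-z)+m p_m\kappa'(-z)-m p_m\kappa'(z)+(\varepsilon_{m-1}+m\varepsilon_m z)\kappa(-z)-\varepsilon_{m-1}\kappa(z)=0.
\end{equation*}
I would then substitute $\kappa=\kappa_++\kappa_-$ and separate even and odd parts, using that $\kappa_+,\kappa_+'',\kappa_-'$ are even while $\kappa_+',\kappa_-,\kappa_-''$ are odd. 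The even part collapses to $m z\,p_m(\kappa_-''-\mu^2\kappa_-)$, which vanishes identically by \eqref{y^m relation and mu def} and so yields nothing new. Replacing $\kappa_-''$ by $\mu^2\kappa_-$ in the odd part and dividing by $-m p_m$ (legitimate since $p_m\neq0$) gives the inhomogeneous Euler-type equation
\begin{equation} \label{y^m-1 relation}
z\kappa_+''+2\kappa_+'-\mu^2 z\kappa_+=\omega\,\kappa_-,\qquad \omega=\frac{2(p_{m-1}\mu^2-\varepsilon_{m-1})}{m p_m}.
\end{equation}

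The key device is then the substitution $\kappa_+(z)=u(z)/z$, under which the singular operator on the left of \eqref{y^m-1 relation} becomes the constant-coefficient operator $u''-\mu^2 u$ (and $u''+\mu^2 u$ in the $\mu\in i\RR$ branch, which produces the $\sin/\cos$ case). For $\mu\neq0$ the right-hand side $\omega\kappa_-=\omega i\alpha\sinh\mu z$ is resonant, so the general solution is $u=a\cosh\mu z+b\sinh\mu z+\tfrac{\omega i\alpha}{2\mu}z\cosh\mu z$; dividing by $z$ and demanding that $\kappa_+$ be even and analytic at the origin kills the $\cosh\mu z/z$ mode ($a=0$), leaving $\kappa_+=\kappa_0\frac{\sinh\mu z}{z}+\frac{\varkappa}{2\mu}\cosh\mu z$ with $\kappa_0=b$ and $\varkappa:=i\alpha\omega$. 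Reality of $\kappa_+$ forced by \eqref{symmetry for kappas} then makes $\kappa_0,\varkappa\in\RR$. The case $\mu=0$ is the same computation with $u''=\omega i\alpha z$, giving $\kappa_+=\kappa_0+\frac{\varkappa}{6}z^2$, and the $\sin$ branch follows from the sign change $\mu^2\to-\mu^2$. Reassembling $k(z)=e^{i\beta z}(\kappa_++\kappa_-)$ produces exactly the three forms in \eqref{k(z) type 1 m>=1}.

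I expect the main obstacle to be purely organizational: correctly assembling the $j=m-1$ relation from \eqref{5 relations} while tracking the polynomial derivatives and the auxiliary coefficients $\varepsilon_j$, and then carrying out the even/odd separation cleanly enough to see that the even part is automatically satisfied and the odd part reduces to the single scalar equation \eqref{y^m-1 relation}. The one genuinely conceptual step is the $\kappa_+=u/z$ reduction together with the correct treatment of the resonant particular solution; once that is in place, the evenness, analyticity, and reality constraints select precisely the stated $\kappa_+$, and with it the forms of $k$.
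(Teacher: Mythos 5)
Your proposal is correct and follows essentially the same route as the paper: extract the $j=m-1$ relation from \eqref{5 relations}, use \eqref{y^m relation and mu def} to reduce it to the single ODE \eqref{y^m-1 relation} for $\kappa_+$ (your expression for $\omega$ is algebraically equivalent to the paper's), and solve via $\kappa_+=u/z$ with evenness, analyticity at $0$, and reality selecting the stated forms. The only cosmetic difference is that you make the even/odd separation and the resonant particular solution explicit, whereas the paper compresses these into ``solving the obtained ODE.''
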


\begin{proof}
So we see that the function $\kappa_+$ in \eqref{k(z) type 1} is not arbitrary and we are going to find it from the relation \eqref{5 relations} with $j=m-1$ (because $m \neq 0$ we can consider the index $m-1$). Recall that w.l.o.g. we assumed $m\geq n$, note that $p^{(m-1)}(z) = m! p_m z + (m-1)! p_{m-1}$, \ $\varepsilon_m = \frac{\lambda^2 p_m}{4} - q_m$ and  $\varepsilon_{m-1} = \frac{\lambda^2 p_{m-1}}{4} - q_{m-1}+\frac{m}{2} \lambda p_m$ so we obtain

\begin{equation*}
\begin{split}
p_{m-1} \kappa''(z) - (m p_m z + p_{m-1}) \kappa''(-z) + mp_m [\kappa'(-z) - \kappa'(z)] +& 
\\
+[m \varepsilon_m z + \varepsilon_{m-1}] \kappa(-z) - \varepsilon_{m-1} \kappa(z) &= 0 .
\end{split}
\end{equation*}

\noindent Now using \eqref{y^m relation and mu def} we can rewrite the above relation as

\begin{equation} \label{y^m-1 relation}
z \kappa_+'' + 2\kappa_+' - \mu^2 z \kappa_+ = \omega \kappa_- ,
\qquad \quad
\omega = -\lambda + \tfrac{2}{m p_m} \left( q_{m-1} - \tfrac{q_m p_{m-1}}{p_m} \right) ,
\end{equation}

\noindent where $\kappa_-$ appears in the three formulas from \eqref{k(z) type 1}. 

According to Remark~\ref{mu i*mu remark}, when $\kappa_-(z) = i \alpha \sin \mu z$, in the above relation $\mu$ should be replaced by $i\mu$, which changes the sign of the last term on LHS from negative to positive. This explains the difference of the sign in the second and third formulas of \eqref{k(z) type 1 m>=1}. Solving the obtained ODE, recalling that $\kappa_+$ is even and real valued, we find \eqref{k(z) type 1 m>=1} with $\varkappa = i \alpha \omega$.

\end{proof}

When $m \geq 2$, we can consider \eqref{5 relations} with $j=m-2$, moreover we know that \eqref{y^m relation and mu def} and \eqref{y^m-1 relation} also hold, and using these and $p^{(m-2)}(z) = \frac{m!}{2} p_m z^2 + (m-1)! p_{m-1} z + (m-2)! p_{m-2}$, the relation with $j=m-2$ can be simplified to

\begin{equation} \label{y^m-2 relation}
z \kappa_-' + \eta_1 \kappa_- = \eta_2 z \kappa_+, \qquad \quad \eta_2 = \frac{\omega}{2} ,
\end{equation} 

\noindent where $\omega$ is defined in \eqref{y^m-1 relation} and $\eta_1$ is a constant whose precise expression is not important.

\begin{prop}
Let $\Re \lambda = 0$ and $m \geq 2$, then with $\lambda = 2 i \beta$ and $\alpha, \kappa_0, \mu \in \RR$

\begin{equation} \label{k(z) for type 1 and m>=2}
k(z) = e^{i \beta z} \cdot
\begin{cases}
\displaystyle \kappa_0 \frac{\sinh \mu z}{z},
\vspace{.1in}
\\
\displaystyle \alpha e^{i \mu z} + \kappa_0 \frac{\sin \mu z}{z}.
\end{cases}
\end{equation}

\noindent Moreover, in the second case the following relations between the involved parameters must be satisfied

\begin{equation} \label{karevor}
\kappa_0 \eta_2 = i \alpha \eta_1 , \qquad \qquad \eta_2 = \pm i \mu .
\end{equation}
\end{prop}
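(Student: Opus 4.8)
The plan is to feed the three candidate forms of $k$ produced by Proposition~\ref{type 1 mult >=2 PROP} into the single extra constraint that the hypothesis $m\ge 2$ makes available, namely relation~\eqref{y^m-2 relation}, and to determine which forms survive. Since $m\ge 2$ implies $m\ge 1$, Proposition~\ref{type 1 mult >=2 PROP} applies and writes $k=e^{i\beta z}(\kappa_++\kappa_-)$ with $\kappa_\pm$ given by \eqref{k(z) type 1 m>=1}; throughout I will use the two facts that $\kappa_+$ is real and even, so that $\varkappa\in\RR$, and that $\varkappa=i\alpha\omega=2i\alpha\eta_2$ since $\eta_2=\omega/2$ by \eqref{y^m-2 relation}. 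The method in each branch is the same: substitute the explicit $\kappa_\pm$ into \eqref{y^m-2 relation} and equate coefficients of the linearly independent functions that appear, then impose the reality of the parameters.

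First I would dispose of the $\mu=0$ branch, where $\kappa_-=i\alpha z$ and $\kappa_+=\kappa_0+\tfrac{\varkappa}{6}z^2$: matching the independent powers $z$ and $z^3$ in \eqref{y^m-2 relation} gives $i\alpha(1+\eta_1)=\eta_2\kappa_0$ and $\eta_2\varkappa=0$, and since $\varkappa=2i\alpha\eta_2$ the latter forces $\alpha=0$ or $\eta_2=0$; in either case $\varkappa=0$ and $k=e^{i\beta z}(\kappa_0+i\alpha z)$ is a polynomial of degree at most one times an exponential, hence trivial and excluded. For the two $\mu\neq 0$ branches I substitute the explicit $\kappa_\pm$ and equate coefficients of $z\cosh\mu z$ and $\sinh\mu z$ in the $\sinh$ branch, and of $z\cos\mu z$ and $\sin\mu z$ in the $\sin$ branch. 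The lower-order coefficient yields in both cases $\eta_2\kappa_0=i\alpha\eta_1$, which is the first relation of \eqref{karevor}, while the top-order coefficient yields the amplitude equation $\alpha(\mu^2-\eta_2^2)=0$ in the $\sinh$ branch and $\alpha(\mu^2+\eta_2^2)=0$ in the $\sin$ branch.

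The argument now turns on the sign in these two amplitude equations, which is the one genuinely delicate point, because it must be played against the reality constraints: $\mu\in\RR$, and, whenever $\alpha\neq0$, reality of $\varkappa=2i\alpha\eta_2$ forces $\eta_2\in i\RR$. In the $\sinh$ branch, $\alpha\neq0$ would give $\mu^2-\eta_2^2=\mu^2+|\eta_2|^2>0$, contradicting $\alpha(\mu^2-\eta_2^2)=0$; hence $\alpha=0$, so $\varkappa=0$, and $\eta_2\kappa_0=0$ forces $\eta_2=0$ (as $\kappa_0\neq0$, else $k\equiv0$), leaving $k=e^{i\beta z}\kappa_0\frac{\sinh\mu z}{z}$, the first displayed formula. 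In the $\sin$ branch the sign is favorable: if $\alpha=0$ one again gets $\eta_2=0$ and $k=e^{i\beta z}\kappa_0\frac{\sin\mu z}{z}$, the second formula with $\alpha=0$; if $\alpha\neq0$, then $\mu^2+\eta_2^2=\mu^2-|\eta_2|^2=0$ gives $\eta_2=\pm i\mu$, the second relation of \eqref{karevor}. In this last case $\varkappa=2i\alpha\eta_2=\mp2\alpha\mu$, so $\tfrac{\varkappa}{2\mu}=\mp\alpha$ and $\kappa_+=\kappa_0\frac{\sin\mu z}{z}\pm\alpha\cos\mu z$; the step that finishes the proof is the identity $\pm\alpha\cos\mu z+i\alpha\sin\mu z=\alpha e^{\pm i\mu z}$, collapsing $\kappa_++\kappa_-$ to $\alpha e^{i\mu z}+\kappa_0\frac{\sin\mu z}{z}$ after a sign relabeling of the free real parameters $\mu,\alpha,\kappa_0$. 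I expect the main obstacle to be not the substitutions, which are routine, but correctly exploiting these reality constraints to separate the $\sinh$ and $\sin$ branches and to extract the exponential $e^{i\mu z}$ from the trigonometric combination.
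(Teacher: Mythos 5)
Your proof is correct and follows essentially the same route as the paper's: substitute the three forms of Proposition~\ref{type 1 mult >=2 PROP} into \eqref{y^m-2 relation}, equate coefficients of the linearly independent functions, use $\varkappa=2i\alpha\eta_2\in\RR$ to force $\eta_2\in i\RR$, and exploit the sign difference between $\mu^2-\eta_2^2$ and $\mu^2+\eta_2^2$ to kill the $\sinh$ branch's $\alpha$ while extracting $\eta_2=\pm i\mu$ and the exponential $e^{\pm i\mu z}$ in the $\sin$ branch. The only (harmless) deviation is in the $\mu=0$ case, where the paper simply notes that $k$ is a polynomial times an exponential and hence trivial without invoking \eqref{y^m-2 relation} at all.
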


\begin{proof}

By Proposition~\ref{type 1 mult >=2 PROP} we know what are the functions $\kappa_-$ and $\kappa_+$ that satisfy the two relations \eqref{5 relations} with $j=m, m-1$ (they are given in the three formulas in \eqref{k(z) type 1 m>=1}, with $\varkappa = i \alpha \omega$). Here we want to see which of these satisfy the third relation \eqref{y^m-2 relation}. First note that $\varkappa \in \RR$ implies $\omega$ and hence also $\eta_2 = \frac{\omega}{2}$ are purely imaginary. The case \eqref{k(z) type 1 m>=1}a implies that $k$ has rank at most three and so, is trivial.

If \eqref{k(z) type 1 m>=1}b holds, then \eqref{y^m-2 relation} after multiplying by $2 \mu$ reads

\begin{equation*}
z (2i\alpha \mu^2 - \eta_2 \varkappa) \cosh (\mu z) + 2 \mu (i\alpha \eta_1 - \eta_2 \kappa_0) \sinh(\mu z) = 0.
\end{equation*}

\noindent By linear independence we conclude that the two coefficients must vanish: $2i\alpha \mu^2 - \eta_2 \varkappa = 0$ and $i \alpha \eta_1 - \eta_2 \kappa_0 = 0$. Let us ignore the second equation (it just gives some restrictions on $q_j$'s), using the expression for $\varkappa$ the first one becomes $\alpha (\mu^2 - \eta_2^2) = 0$. If $\alpha \neq 0$, because $\eta_2 \in i \RR$, we conclude $\mu = \eta_2 = 0$ which is a contradiction. Thus $\alpha = 0$, which gives the first formula of \eqref{k(z) for type 1 and m>=2}.

If \eqref{k(z) type 1 m>=1}c holds, then \eqref{y^m-2 relation} reads 

\begin{equation*}
z (2i\alpha \mu^2 + \eta_2 \varkappa) \cos (\mu z) + 2 \mu (i\alpha \eta_1 - \eta_2 \kappa_0) \sin(\mu z) = 0 .
\end{equation*}

\noindent Again the two coefficients must be zero, the second one implies the first relation of \eqref{karevor} and the first one gives $\alpha (\mu^2 + \eta_2^2) = 0$. One possibility is $\alpha = 0$, another one: when $\alpha \neq 0$, then $\Im \eta_2 = \pm \mu$, hence we may write $\kappa (z) = \pm \alpha (\cos \mu z \pm i \sin \mu z ) + \kappa_0 \frac{\sin \mu z}{z} = \pm \alpha e^{\pm i \mu z}+ \kappa_0 \frac{\sin \mu z}{z}$. These cases can be unified in the second formula of \eqref{k(z) for type 1 and m>=2}.  
\end{proof}

\begin{corollary} \label{nu=1, m=2 CORRO}
When there is one type 1 root with multiplicity three (i.e. $\nu = 1$, $m=2$ and $\lambda = 2i \beta$), we obtain item 1 (in the limiting case $\gamma = 0$) and item 2 of Theorem~\ref{THM sesqui comm}.

\end{corollary}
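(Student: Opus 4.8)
The plan is to obtain both items by reading off the structural forms already established for a single type~1 mode and then determining the zeroth order coefficient $\mathcal{c}$; the only real work is the bookkeeping of the two conventions in force (the notational abuse $k(z)=\check k(-z)$ and the multiplier normalization of Remark~\ref{REM multiplier sesqui}) together with the computation of the additive constant in $\mathcal{c}$.

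First I would fix $\mathcal{b}$. Since $\nu=1$ and the mode $\lambda=2i\beta$ has multiplicity three, $\mathcal{b}(y)=p(y)e^{2i\beta y}$ with $\deg p=2$, and the boundary conditions $\mathcal{b}(\pm1)=0$ force $p(\pm1)=0$, hence $p(y)=p_2(y^2-1)$. I would then apply Remark~\ref{REM multiplier sesqui} with $\tau=i\beta$: the associated transformation $L\mapsto M^{-1}LM^{-1}$ replaces $\mathcal{b}$ by $e^{-2\tau y}\mathcal{b}=p_2(y^2-1)$ and multiplies the (true) kernel by $e^{\tau z}$, which, after undoing the abuse $k(z)=\check k(-z)$, strips the factor $e^{\pm i\beta z}$. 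Thus, without loss of generality $\beta=0$, and after normalizing $p_2=1$ we have $\mathcal{b}(y)=y^2-1$, matching both items.

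Next I would extract the kernel. Because $\Re\lambda=0$ and $m=2$, the preceding proposition already provides $k$ in the two forms of \eqref{k(z) for type 1 and m>=2}. Setting $\beta=0$, undoing the abuse $k(z)=\check k(-z)$, and rescaling by a real constant turns the first form into $\check k(z)=\frac{\sinh\mu z}{\mu z}$ and the second into $\check k(z)=\alpha e^{-i\mu z}+\frac{\sin\mu z}{z}$ with $\alpha\neq0$. The first is precisely the kernel of item~1 in the limit $\gamma\to0$, where $\frac{\gamma\sinh\mu z}{\mu\sinh\gamma z}\to\frac{\sinh\mu z}{\mu z}$ and $\frac{1}{2\gamma^2}[\cosh 2\gamma y-\cosh 2\gamma]\to y^2-1$, so the coefficient formulas degenerate correctly; the second is the kernel of item~2.

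Finally, and this is the step that actually requires computation, I would pin down $\mathcal{c}(y)=q_2 y^2+q_1 y+q_0$ from the three surviving relations in \eqref{5 relations} (indices $j=m,m-1,m-2$), already reduced to \eqref{y^m relation and mu def}, \eqref{y^m-1 relation} and \eqref{y^m-2 relation}. The first gives $q_2$ in terms of $\mu$, with the sign depending on whether $\mu$ here is $i$ times the $\mu$ of Remark~\ref{mu i*mu remark}, so that $q_2=-\mu^2$ in the sinh case and $q_2=\mu^2$ in the sin case; the remaining two, through the quantities $\omega,\eta_1,\eta_2$ and the constraints \eqref{karevor}, fix $q_1$ and $q_0$, giving $q_1=0$ in the first case ($\alpha=0$, forcing $\eta_2=0$) and $q_1=2i\mu$ in the second ($\eta_2=i\mu$). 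Assembling these yields $\mathcal{c}=-\mu^2\mathcal{b}+c_0$ with $c_0$ free in the first case, and $\mathcal{c}=i\mu\mathcal{b}'+\mu^2\mathcal{b}+\mu/\alpha$ in the second, the constant $\mu/\alpha$ being forced by $\kappa_0\eta_2=i\alpha\eta_1$ after normalizing $\kappa_0=1$. The principal obstacle is organizational rather than conceptual: keeping straight the sign flips induced by $k(z)=\check k(-z)$ (so that $e^{i\mu z}\mapsto e^{-i\mu z}$ and odd Taylor coefficients change sign), confirming that no spurious exponential mode $e^{2k_1/k_0\,y}$ survives in $\mathcal{c}$, and extracting the exact additive constant $\mu/\alpha$, which needs the otherwise-suppressed expression of $\eta_1$ in terms of $q_0$.
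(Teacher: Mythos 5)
Your proposal is correct in substance and follows essentially the same route as the paper: normalize $\beta=0$ via Remark~\ref{REM multiplier sesqui}, get $\mathcal{b}(y)=y^2-1$ from the boundary conditions, take $k$ from \eqref{k(z) for type 1 and m>=2}, and then determine $\mathcal{c}$; your final formulas agree with items 1 ($\gamma=0$) and 2. The only organizational difference is that you extract $q_2,q_1,q_0$ from the already-reduced relations \eqref{y^m relation and mu def}, \eqref{y^m-1 relation}, \eqref{y^m-2 relation} together with \eqref{karevor}, whereas the paper re-substitutes a general ansatz for $\mathcal{c}$ into \eqref{sesqui comm rel} and solves the resulting system \eqref{e^tau}, \eqref{y^3..1} from scratch; since those are the same equations, nothing is lost. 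The one step that needs repair is your a priori form of $\mathcal{c}$: with $\mathcal{b}$ quadratic, \eqref{c relation when k0 not 0} only gives that $\mathcal{c}$ is a \emph{cubic} polynomial plus possibly an exponential $c_4 e^{\tau y}$, not a quadratic. The reduced relations you invoke were derived under the standing assumption $\deg q\le\deg p$, and that assumption is not automatic here: the top-degree relation in \eqref{5 relations} only yields $q_n\kappa_-=0$, which is vacuous in the even-kernel branches ($\kappa_-=0$), so $q_3$ is not killed for free. One must use the odd-in-$z$ part of the $j=2$ relation --- the paper's second equation of \eqref{y^3..1}, giving $c_3\sinh(\mu z)=0$ (resp. $c_3\sin(\mu z)=0$) --- to force $c_3=0$, and the separate relation \eqref{e^tau} to force $c_4=0$. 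You flag the exponential mode but not the cubic term; once both are disposed of, the rest of your computation goes through.
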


\begin{proof}
Using the boundary conditions $\mathcal{b}(y) = (y^2-1) e^{\lambda y}$, we know $k$ from the above proposition so it only remains to find $\mathcal{c}$. Before that let us invoke Remark~\ref{REM multiplier sesqui} and w.l.o.g. assume that $\beta = 0$, or equivalently $\lambda = 0$.

 From \eqref{c relation when k0 not 0} we know that $\mathcal{c}(y) = \sum_{j=0}^3 c_j y^j + c_4 e^{\tau y}$ with $\tau \neq 0$. Clearly $\mu \neq 0$, otherwise $k$ is trivial (see \eqref{k(z) for type 1 and m>=2}). We substitute these expressions into \eqref{sesqui comm rel} and obtain that a linear combination of $e^{\tau y}$ and monomials $y^j$ is zero, hence by linear independence each of the coefficients must vanish. The equation coming from the term $e^{\tau y}$ reads
 
\begin{equation} \label{e^tau}
c_4 \left[ k(z) - e^{\tau z} k(-z) \right] = 0 .
\end{equation}

\noindent Equations coming from the terms $y^3,...,1$, respectively are

\begin{equation} \label{y^3..1}
\begin{split}
c_3 \left[ k(z) - k(-z) \right] &= 0,
\\
k''(z) - k''(-z) + c_2 k(z) - (3c_3 z + c_2) k(-z) &= 0,
\\
2z k''(-z) + 2 k'(z) - 2 k'(-z) - c_1 k(z) + (3 c_3 z^2 + 2 c_2 z + c_1) k(-z) &= 0,
\\
k''(z) + (z^2-1)k''(-z) - 2z k'(-z) - c_0 k(z) + (c_3 z^3 + c_2 z^2 + c_1 z + c_0) k(-z) &= 0 .
\end{split}
\end{equation}

Assume $k$ is given by the first formula of \eqref{k(z) for type 1 and m>=2}, in particular it is even and \eqref{e^tau} implies $c_4 = 0$. The first equation of \eqref{y^3..1} is identity, the second one implies $c_3 \sinh(\mu z) = 0$ and hence $c_3=0$. Third one reads $(c_2 + \mu^2) \sinh (\mu z) = 0$, hence $c_2 = - \mu ^2$. Finally, the fourth relation simplifies to $c_1 \sinh (\mu z) = 0$, so that $c_1 = 0$. We note that $c_0$ remains free. Thus, we conclude that $\mathcal{c}(y) = -\mu^2 y^2 + c_0$ and since we are free to choose $c_0$, we can rewrite $\mathcal{c}$ as $\mathcal{c}(y) = -\mu^2 \mathcal{b}(y) + c_0$, which proves item 1 of Theorem~\ref{THM sesqui comm} in the case $\gamma = 0$ and $\mu \in \RR$. 

Assume $k$ is given by the second formula of \eqref{k(z) for type 1 and m>=2}. Because $\kappa_0 \neq 0$, we may normalize it to be one. \eqref{e^tau} reads

$$c_4 \left[ e^{-i\mu z} - e^{i\mu z}  + e^{(i\mu+\tau) z} - e^{(-i\mu+\tau) z} + i\alpha z (e^{(-i\mu+\tau) z} - e^{i\mu z}) \right] = 0 ,$$

\noindent and from the linear independence of the involved exponentials we get $c_4 = 0$. The first equation of \eqref{y^3..1} reads $c_3 \alpha \sin (\mu z) = 0$, and there are two cases to consider. 

If $\alpha = 0$, the second equation reads $c_3 \sin (\mu z) = 0$, so $c_3 = 0$. The third equation becomes  $(c_2 - \mu^2) \sin (\mu z) = 0$, hence $c_2 = \mu^2$. Finally, the fourth equation implies $c_1 = 0$ and again $c_0$ is free. So we find $\mathcal{c}(y) = \mu^2 \mathcal{b}(y) + c_0$, which proves item 1 of Theorem~\ref{THM sesqui comm} in the case $\gamma = 0$ and $\mu \in i\RR$. 

If $\alpha \neq 0$, then $c_3 = 0$. The second equation of \eqref{y^3..1} implies $c_2 = \mu^2$, the third one: $c_1 = 2i\mu$ and finally the fourth one implies $c_0 = - \mu^2 + \frac{2\mu}{\alpha}$. Thus, $\mathcal{c}(y) = \mu^2 (y^2-1) + 2 i \mu y + \frac{2 \mu}{\alpha}$, which proves item 2 of Theorem~\ref{THM sesqui comm}.  

\end{proof}

\begin{lemma} \label{type 1 mult > 3 impossible LEMMA}
Let $\Re \lambda = 0$ and $m \geq  3$, then $k$ is trivial.
\end{lemma}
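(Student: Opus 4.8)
The plan is to use the single relation in \eqref{5 relations} that the propositions of this subsection have not yet consumed. Because $m \geq 3$, the degree bound $\nu + \sum_j d_j \leq 4$ forces the mode $\lambda$ to be the only one present, $\nu = 1$, and $m = 3$ exactly; together with the boundary conditions $\mathcal{b}(\pm 1) = 0$ this gives $\mathcal{b}(y) = (y^2-1)(ay+b)e^{\lambda y}$ with $a \neq 0$. By Remark~\ref{REM multiplier sesqui} I normalize $\lambda = 2i\beta = 0$. At this stage $k$ is already fully determined --- Proposition~\ref{type 1 mult >=2 PROP} and its successor leave only the two forms of \eqref{k(z) for type 1 and m>=2} --- and \eqref{c relation when k0 not 0} fixes $\mathcal{c}$ from $\mathcal{b}$ up to an additive constant (with an extra $e^{\tau y}$ term precisely when $k_1 \neq 0$). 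Hence everything may be substituted into \eqref{sesqui comm rel} and the relation expanded in powers of $y$, the new information being exactly the lowest coefficient, i.e. \eqref{5 relations} with $j = m-3 = 0$.

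I would dispatch the two even cases first --- form~1, $k = \kappa_0\frac{\sinh\mu z}{z}$, and the $\alpha = 0$ subcase of form~2, $k = \kappa_0\frac{\sin\mu z}{z}$. Here $k(-z) = k(z)$ and $\mathcal{c} = C_1\mathcal{b} + \text{const}$, so \eqref{sesqui comm rel} collapses to
\begin{equation*}
[\mathcal{b}(y) - \mathcal{b}(y+z)]\bigl(k''(z) + C_1 k(z)\bigr) - [\mathcal{b}'(y) + \mathcal{b}'(y+z)]\,k'(z) = 0 .
\end{equation*}
Since $\mathcal{b}(y) - \mathcal{b}(y+z)$ is quadratic in $y$, I collect the coefficients of $y^2, y^1, y^0$. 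The $y^2$ coefficient reproduces the ODE $z k'' + 2 k' + C_1 z k = 0$, which is \eqref{y^m-1 relation} written for $k$ and fixes $C_1 = \mp\mu^2$ (sign according to the $\sinh$/$\sin$ case); equivalently $k'' + C_1 k = -\tfrac{2}{z}k'$. Feeding this identity into the $y^1$ coefficient makes it vanish identically, while the $y^0$ coefficient collapses to $-a\,z^2 k'(z)$. As $k' \not\equiv 0$ for nontrivial $k$, this forces $a = 0$, contradicting $\deg p = 3$.

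The genuinely harder case, and the main obstacle, is form~2 with $\alpha \neq 0$, where $k = \alpha e^{i\mu z} + \kappa_0\frac{\sin\mu z}{z}$ is no longer even and $\mathcal{c}$ carries an $e^{\tau y}$ term. The computation is then the one in Corollary~\ref{nu=1, m=2 CORRO}, but with the cubic $\mathcal{b} = (y^2-1)(ay+b)$ in place of $y^2-1$: substituting into \eqref{sesqui comm rel} and using linear independence of the monomials $y^j$ together with $e^{\tau y}$ separates the relation into a finite system. The top coefficients successively and consistently determine $c_3, c_2, \dots$ (again encoding the known ODE for $k$), and the work is to track the remaining cancellations and show that the lowest coefficient isolates a nonzero multiple of $a$ times a function not vanishing identically, so once more $a = 0$. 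In every branch nontriviality of $k$ is incompatible with $\deg p = 3$; thus a type~1 mode cannot have $m \geq 3$, and $k$ is trivial.
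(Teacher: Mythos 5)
Your overall strategy coincides with the paper's: for $m\geq 3$ the only unused information is the relation \eqref{5 relations} with $j=m-3$, and the task is to show that the two surviving kernels of \eqref{k(z) for type 1 and m>=2} cannot satisfy it. Your treatment of the two even sub-cases ($k=\kappa_0\sinh(\mu z)/z$, and the $\alpha=0$ branch of the second formula) is complete and correct: the $y^2$-coefficient reproduces the known ODE $zk''+2k'+C_1 zk=0$, the $y^1$-coefficient then vanishes identically, and the $y^0$-coefficient collapses to $-a z^2 k'(z)=0$, which is incompatible with a nontrivial $k$. This is a legitimate, more explicit version of what the paper does; the paper keeps general $m$ and works with the pre-reduced relation \eqref{y^m-3 relation}, $(\eta_2 z^2+\eta_3)\kappa_-=z^2\kappa_+'+3\eta_1 z\kappa_+$, rather than with the explicit cubic $\mathcal{b}=(y^2-1)(ay+b)$.

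The gap is the decisive sub-case $k=\alpha e^{i\mu z}+\kappa_0\frac{\sin\mu z}{z}$ with $\alpha\neq 0$, which you describe but do not carry out. Closing it is not a routine repeat of Corollary~\ref{nu=1, m=2 CORRO}: one must feed in the constraints \eqref{karevor} inherited from the $j=m-2$ relation, namely $\eta_2=\pm i\mu$ and $\kappa_0\eta_2=i\alpha\eta_1$, which pin down $\kappa_+=\frac{\sin\mu z}{z}-\frac{i\alpha\eta_2}{\mu}\cos\mu z$ and $\kappa_-=i\alpha\sin\mu z$. Substituting these into the reduced $j=m-3$ relation yields a combination of $\sin\mu z$ and $z\cos\mu z$ whose $z\cos\mu z$-coefficient is $\mu+3\alpha\eta_1=4\mu$, so the contradiction that actually terminates this branch is $\mu=0$ (hence $k$ trivial) --- not ``a nonzero multiple of $a$, so $a=0$'' as you predict. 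The discrepancy between the announced shape of the contradiction and the one that occurs confirms that this branch was asserted rather than verified; as written, the hardest case of the lemma is missing.
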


\begin{proof}

By the previous proposition we know that $\kappa(z)$ has two possible forms coming from \eqref{k(z) for type 1 and m>=2}. The goal is to show that it cannot solve \eqref{5 relations} with $j = m-3$. Using the equations \eqref{y^m relation and mu def}, \eqref{y^m-1 relation} and \eqref{y^m-2 relation} we can rewrite the relation for $j=m-3$ as 

\begin{equation} \label{y^m-3 relation}
(\eta_2 z^2 + \eta_3) \kappa_- = z^2 \kappa_+' +  3 \eta_1 z \kappa_+ ,
\end{equation}

\noindent where $\eta_1, \eta_2$ are the same as in \eqref{y^m-2 relation} and the expression for $\eta_3$ is not important. 

When $k$ is given by the first formula of \eqref{k(z) for type 1 and m>=2}, $\kappa_-(z) = 0$ and $\kappa_+(z) = \kappa_0 \frac{\sinh(\mu z)}{z}$ so \eqref{y^m-3 relation} implies $\mu = 0$ and hence $k=0$.

When $k$ is given by the second formula of \eqref{k(z) for type 1 and m>=2}, let us w.l.o.g. take $\kappa_0 = 1$. As we saw in the previous proposition $\kappa_-(z) = i \alpha \sin (\mu z)$ and $\kappa_+(z) = \frac{\sin(\mu z)}{z} - \frac{i \alpha \eta_2}{\mu} \cos (\mu z)$ with $\eta_2 = \pm i \mu$ and $i \alpha \eta_1= \eta_2$. Let first $\eta_2 = i \mu$, then substituting $\kappa_\pm$ into \eqref{y^m-3 relation} we get

\begin{equation*}
\left[ i \alpha \eta_3 + \kappa_0 (1 - 3 \eta_1) \right] \sin (\mu z) - z \left( \mu  + 3 \alpha \eta_1 \right) \cos (\mu  z) = 0 .
\end{equation*}

\noindent But then $\mu  + 3 \alpha \eta_1 = 4 \mu$ which must be zero, hence $k$ is trivial. The case $\eta_2 = - i \mu$ is done analogously.

\end{proof}

\subsubsection{Type 2 mode and multiplicities}

\begin{lemma} \label{type 2 mult>1 impossible LEMMA}
Let $\Re \lambda \neq 0$ and $m\geq 1$, then $k = 0$.
\end{lemma}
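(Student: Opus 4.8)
The plan is to mirror the strategy used in the type 1 analysis, but now exploit the fact that for a type 2 mode the function $\kappa_+$ is \emph{not} free: equation~\eqref{symmetry for kappas} determines $\kappa_+$ uniquely in terms of $\kappa_-$ precisely because $\Re\lambda\neq 0$. Concretely, writing $\lambda = 2\gamma + 2i\beta$ with $\gamma\neq 0$, the kernel $k$ is forced into one of the two rigid forms of~\eqref{k(z) type 2}. Since $m\geq 1$ means the mode $\lambda$ carries multiplicity at least two, there is at least one additional relation in~\eqref{5 relations} beyond the top one $j=m$, namely the relation for $j=m-1$. The whole game is to show that this extra relation is incompatible with the forms~\eqref{k(z) type 2}, forcing $k=0$.

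First I would record the $j=m$ relation, which as in~\eqref{y^m relation and mu def} gives $\kappa_-'' - \mu^2\kappa_- = 0$ with $\mu^2 = \tfrac{\lambda^2}{4} - \tfrac{q_m}{p_m}$, so $\kappa_-$ is $\alpha z$ or $\alpha\sinh(\mu z)$; then~\eqref{symmetry for kappas} pins down $\kappa_+$ and hence the explicit shapes of $k$ in~\eqref{k(z) type 2}. Next I would extract the $j=m-1$ relation. Exactly as in the derivation of~\eqref{y^m-1 relation}, using $p^{(m-1)}(z)=m!\,p_m z + (m-1)!\,p_{m-1}$ together with the already-known $j=m$ identity, this relation simplifies to an inhomogeneous second-order ODE for $\kappa_+$ of the form
\begin{equation*}
z\kappa_+'' + 2\kappa_+' - \mu^2 z\kappa_+ = \omega\,\kappa_-,
\end{equation*}
with $\omega$ the analogue of the constant appearing in~\eqref{y^m-1 relation}. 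The crucial difference from type 1 is that here $\kappa_+$ is already \emph{determined} by~\eqref{symmetry for kappas}, so this ODE is now an overdetermined constraint rather than a definition of $\kappa_+$.

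The decisive step is then to substitute the known $\kappa_\pm$ — read off from~\eqref{k(z) type 2} via the relation $\kappa_+ = e^{-\gamma z}\,\overline{(\kappa_+ + \kappa_-)}$ forced by symmetry — into this ODE and check consistency. Because $\gamma\neq 0$, the even part $\kappa_+$ built from~\eqref{symmetry for kappas} mixes the factors $e^{\pm\gamma z}$ with $\sinh(\mu z)$ (or $z$), producing genuinely new exponential combinations of the type $e^{(\pm\gamma\pm\mu)z}$, whereas the left-hand operator $z\,\partial_z^2 + 2\,\partial_z - \mu^2 z$ only shifts within the $\mu$-scale. Matching coefficients of the linearly independent exponentials (and polynomial prefactors) will force the amplitude $\alpha=0$ in both the $\mu=0$ and $\mu\neq 0$ cases, whence $k=0$. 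I expect the main obstacle to be purely bookkeeping: carefully writing out $\kappa_+$ from~\eqref{symmetry for kappas} in each of the two cases of~\eqref{k(z) type 2}, substituting into the $j=m-1$ ODE, and organizing the resulting terms by their distinct exponents $e^{(\pm\gamma\pm\mu)z}$ so that linear independence can be invoked cleanly. The conceptual point, that $\Re\lambda\neq 0$ removes the freedom in $\kappa_+$ and thereby over-constrains the second relation, is what makes multiplicity $\geq 2$ impossible.
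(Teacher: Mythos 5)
Your plan is correct and is essentially the paper's proof: both arguments exploit the fact that for $\Re\lambda\neq 0$ the symmetry relation \eqref{symmetry for kappas} determines $\kappa_+$ (as $\kappa_+=-\coth(\gamma z)\,\kappa_- - \csch(\gamma z)\,\overline{\kappa}_-$), which is then played off against the $j=m-1$ relation \eqref{y^m-1 relation} to force $\alpha=0$ and hence $k=0$. The only difference is one of bookkeeping: the paper solves the $j=m-1$ ODE for $\kappa_+$ in closed form and obtains the contradiction from asymptotics as $z\to+\infty$, rather than by matching coefficients of exponentials directly (which would first require clearing the $\coth$ and $\csch$ denominators and handling possible coincidences among the exponents $\pm\gamma\pm\mu$).
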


\begin{proof}

Let $\lambda = \gamma + i\beta$, with $\gamma \neq 0$, \eqref{symmetry for kappas} implies

\begin{equation*}
\begin{cases}
\kappa_+ - \overline{\kappa}_+ e^{\gamma z} =  \overline{\kappa}_- e^{\gamma z} + \kappa_-,
\\
\overline{\kappa}_+  -  \kappa_+ e^{\gamma z} =  \kappa_-  e^{\gamma z} + \overline{\kappa}_-,
\end{cases}
\end{equation*}

\noindent where the second equation was obtained by conjugating the first one, then

\begin{equation} \label{kappa_+ in terms of kappa_-}
\kappa_+ = - \coth (\gamma z) \kappa_- - \csch (\gamma z) \overline{\kappa}_- .
\end{equation}

\noindent We know that both of the relations \eqref{y^m relation and mu def} and \eqref{y^m-1 relation} hold. When $\mu = 0$, we have $\kappa_-(z) = \varkappa z$, hence $\kappa_+(z) = \frac{\omega \alpha}{6} z^2 + \kappa_0$ and comparing this with \eqref{kappa_+ in terms of kappa_-} we conclude $k=0$. So let us assume $\mu \neq 0$, then from \eqref{kappa_-}, $\kappa_-(z) = \alpha \sinh(\mu z)$, hence solving the ODE \eqref{y^m-1 relation} we get

\begin{equation*}
\kappa_+(z) = c_2 \frac{\sinh (\mu z)}{z} + \frac{\varkappa \alpha}{2\mu} \cosh(\mu z) ,
\end{equation*}

\noindent substitute this into \eqref{kappa_+ in terms of kappa_-} divide the result by $\sinh (\mu z)$ to get

\begin{equation*}
\frac{c_2}{z} + \frac{\varkappa \alpha}{2\mu} \coth(\mu z) = -\alpha \coth (\gamma z) - \overline{\alpha} \frac{\sinh (\overline{\mu} z)}{\sinh (\mu z)} \csch (\gamma z) .
\end{equation*}

\noindent Assume $\gamma > 0$ (otherwise negate $(\gamma, \alpha, \varkappa)$), write $\mu = \mu_1 + i\mu_2$, assume $\mu_1 \neq 0$, then we may assume $\mu_1 > 0$, otherwise multiply the equation by $-1$. Now consider the asymptotics as $z \to +\infty$,

\begin{equation*}
\frac{c_2}{z} + \frac{\varkappa \alpha}{2\mu} = -\alpha - 2\overline{\alpha} e^{-\gamma z} e^{-2i\mu_2 z} ,
\end{equation*}

\noindent clearly this implies $\alpha=c_2=0$, so $k=0$. Let now $\mu_1 = 0$, then the relation reads

\begin{equation*}
\frac{c_2}{z} - \frac{\varkappa \alpha}{2\mu_2} \cot(\mu_2 z) = -\alpha \coth (\gamma z) + \overline{\alpha} \csch (\gamma z) ,
\end{equation*}

\noindent and asymptotics at $+\infty$ gives $\frac{c_2}{z} - \frac{\varkappa \alpha}{2\mu_2} \cot(\mu_2 z) = -\alpha  + 2 \overline{\alpha} e^{-\gamma z}$ which again implies $\alpha=c_2=0$.

\end{proof}

\subsection{Multiple modes} \label{SECT multiple modes}

Before we start to analyze the possibilities of having multiple distinct modes $\lambda_j$ in \eqref{b(y) combination of exps}, we state that in view of Lemmas~\ref{type 1 mult > 3 impossible LEMMA} and ~\ref{type 2 mult>1 impossible LEMMA} the cases I and II can be rewritten

\begin{enumerate}
\item[I.] $\nu=1, \ d_1=2, \ \Re \lambda_1=0$;
\item[IIa.] $\nu=2, \ d_1 \geq 1, \ \Re \lambda_1 = \Re \lambda_2 =0$;
\item[IIb.] $\nu=2, \ d_1 \geq 1, \ \Re \lambda_1 =0, \  \Re \lambda_2 \neq 0$.
\end{enumerate}

\noindent The case I was analyzed in Corollary~\ref{nu=1, m=2 CORRO}, so it remains to consider cases IIa,b and III, IV. We will see in Lemmas~\ref{two type 1 roots with one mult>1 LEMMA} and ~\ref{type 1 with mult>1 and type 2 LEMMA} that the cases IIa,b lead to trivial kernels $k$. Case III will be analyzed in Section~\ref{SECT item 3}. We will show that case IV is only possible when there are exactly three modes: two type 1 and one type 2, all with multiplicity one. This case will then be analyzed in Section~\ref{SECT item 1}.  

When $\lambda_j = 2i \beta_j$ (of course $\beta_1 \neq \beta_2$) then \eqref{k(z) type 1} holds true for both of the modes $\lambda_j$ and we determine the free functions and conclude

\begin{equation} \label{k(z) two type 1 roots}
k(z) = \frac{\alpha_1 k_s(\mu_1 z) e^{i\beta_1 z} + \alpha_2 k_r(\mu_2 z) e^{i\beta_2 z}}{\sin (\beta_1-\beta_2) z}, \qquad r,s \in \{1,2,3\},
\end{equation}

\noindent where all the constants are real, $\mu_j \neq 0$ and $k_r$ is given by

\begin{equation} \label{k1, k2 and k3}
k_1(t)=t, \quad k_2(t) = \sin t,  \quad k_3(t) = \sinh t.
\end{equation}

\begin{prop} \label{beta1 and 2 are determined by k PROP}
Let $k$ be given by \eqref{k(z) two type 1 roots}, then $\beta_1$ and $\beta_2$ are determined by $k$.
\end{prop}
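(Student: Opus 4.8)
The plan is to exhibit $\beta_1$ and $\beta_2$ as the output of an explicit recovery procedure applied to $k$, which is exactly what ``determined by $k$'' asks for. Write $\delta := \beta_1 - \beta_2 \neq 0$ (nonzero because $\lambda_1 \neq \lambda_2$) and set $N(z) := \alpha_1 k_s(\mu_1 z) e^{i\beta_1 z} + \alpha_2 k_r(\mu_2 z) e^{i\beta_2 z}$, so that $k = N/\sin(\delta z)$ with $N$ entire. First I would pin down $|\delta|$ from the pole set of $k$. The candidate poles are the real points $z_n = n\pi/\delta$, $n \in \ZZ \setminus \{0\}$, and $z_n$ is a genuine pole of $k$ unless $N(z_n) = 0$. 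If every candidate pole (and the removable one at $0$) were cancelled, then $k$ would extend to an entire function; but the numerator of \eqref{k(z) two type 1 roots} is a short exponential sum, and $N/\sin(\delta z)$ being entire forces $k$ to be an exponential polynomial, hence trivial by Definition~\ref{trivial DEF}, a contradiction. Thus only finitely many candidate poles are cancelled, the genuine poles of $k$ form an arithmetic progression on $\RR$ with at most finitely many gaps, and their common spacing $\pi/|\delta|$ (read off, e.g., from the asymptotic counting function of the poles) determines $|\delta| = |\beta_1 - \beta_2|$. Since the representation is symmetric under interchanging the two modes, which merely negates $\delta$, fixing the labeling convention $\beta_1 > \beta_2$ fixes the sign of $\delta$; consequently $N(z) = k(z)\sin(\delta z)$ is itself an invariant of $k$.

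Second, I would read off the unordered pair $\{\beta_1,\beta_2\}$ from the exponential polynomial $N$. Expanding each summand via $k_1(\mu z) = \mu z$, $k_2(\mu z) = \tfrac{1}{2i}(e^{i\mu z} - e^{-i\mu z})$, and $k_3(\mu z) = \tfrac{1}{2}(e^{\mu z} - e^{-\mu z})$, the mode $j$ contributes, according to its type: for a $\sinh$-mode the two exponents $\pm\mu_j + i\beta_j$, which are the only exponents of $N$ with nonzero real part, with $\beta_j$ their common imaginary part; for a linear mode the single exponent $i\beta_j$ carrying a degree-one polynomial coefficient $\mu_j z$, the only exponent of $N$ with a degree-one coefficient; for a $\sin$-mode the two purely imaginary exponents $i(\beta_j \pm \mu_j)$, symmetric about $i\beta_j$. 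By linear independence of the functions $z^l e^{cz}$, the multiset of exponents of $N$ together with their polynomial coefficients is an invariant of $N$, hence of $k$. In each configuration $\beta_j$ is then recovered as an imaginary part or an average of specifically identified exponents, so $\{\beta_1,\beta_2\}$ is determined.

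The main obstacle is the unambiguous pairing of the purely imaginary exponents when both modes are of $\sin$-type (or, more generally, when the four zero-real-part exponents interleave), since a priori a set of four points on the imaginary axis can be grouped into symmetric pairs in more than one way, yielding different candidate centers. I would resolve this using the coefficient data together with the already-determined $\delta$: the two exponents belonging to a single $\sin$-mode carry coefficients of equal magnitude $|\alpha_j|/2$ and opposite sign, which forces the pairing whenever $|\alpha_1| \neq |\alpha_2|$; the remaining degenerate case $|\alpha_1| = |\alpha_2|$ is pinned down by the requirement that the two centers differ by exactly the known value $\delta$, supplemented if necessary by the vanishing $N(0) = 0$ (analyticity of $k$ at the origin) and by the sign pattern of the residues $R_n = (-1)^n N(z_n)/\delta$. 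The $\sinh$-mode and linear-mode exponents carry their own distinguishing features (nonzero real part, respectively degree-one coefficient), so no pairing ambiguity arises for them. Once the pairing is forced, the centers $\beta_1,\beta_2$ are read off uniquely and the proposition follows.
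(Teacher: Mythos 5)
Your strategy (an explicit recovery procedure for $\beta_1,\beta_2$ from $k$) is legitimate in principle, but its first step contains a genuine gap: from ``not every candidate pole of $\sin(\delta z)$ is cancelled by $N$'' you infer that ``only finitely many candidate poles are cancelled,'' and this dichotomy is false. The numerator can vanish at infinitely many, but not all, of the points $z_n=n\pi/\delta$. Concretely, take $s=r=2$, $\delta=\beta_1-\beta_2=1$, $\mu_1=\tfrac12$, $\mu_2=\tfrac32$, $\alpha_1=\alpha_2=1$, so that
\begin{equation*}
k(z)=\frac{\sin(\tfrac{z}{2})\,e^{i\beta_1 z}+\sin(\tfrac{3z}{2})\,e^{i\beta_2 z}}{\sin z}.
\end{equation*}
At $z=n\pi$ the numerator equals $e^{i\beta_2 n\pi}\bigl[(-1)^n\sin(\tfrac{n\pi}{2})+\sin(\tfrac{3n\pi}{2})\bigr]$, which vanishes for every even $n$ and equals $\mp(\alpha_1+\alpha_2)\neq0$ for odd $n$. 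This $k$ satisfies (A), is analytic at $0$ and smooth on $[-2,2]$, and is nontrivial (it has poles), yet its genuine poles are exactly the odd multiples of $\pi$: an arithmetic progression of spacing $2\pi/|\delta|$, not $\pi/|\delta|$. Your procedure therefore outputs $|\delta|/2$, and indeed the pole set alone cannot distinguish $\delta=1$ from $\delta=2$ here. Since the definition of $N=k(z)\sin(\delta z)$ and the entire second step depend on having the correct $\delta$, the argument does not close. (Two smaller soft spots: the claim that $N/\sin(\delta z)$ entire forces $k$ to be an exponential polynomial needs either Ritt's quotient theorem or the explicit case analysis the paper carries out in Lemma~\ref{two type 1 roots with one mult>1 LEMMA}; and the resolution of the pairing ambiguity for two $\sin$-modes with $|\alpha_1|=|\alpha_2|$ is left at the level of a list of hints rather than a proof.)

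The paper sidesteps all of this by working on the imaginary axis instead of with the real pole set: writing $k(it)$ turns the denominator into $\sinh((\beta_1-\beta_2)t)$, and matching exponential growth rates as $t\to+\infty$ and $t\to-\infty$ determines $2\beta_1$ and $2\beta_2$ directly whenever both $\alpha_j\neq0$; the degenerate cases $\alpha_j=0$ are settled by adjoining the value $k(0)$ and showing that non-uniqueness forces $k$ to be a single exponential, hence trivial. (One can check that this asymptotic argument does recover $\beta_1,\beta_2$ in the counterexample above.) If you want to keep your numerator-based second step, you would first need to determine $\delta$ by some such asymptotic comparison, or by characterizing it intrinsically as the unique $d>0$ for which $k(z)\sin(dz)$ is an exponential sum of the prescribed shape; as written, the pole-counting step fails.
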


\begin{proof}

W.l.o.g. let $\beta_1-\beta_2 > 0$, otherwise swap $
\beta_1$ with $\beta_2$; $r$ with $s$; $\mu_1$ with $\mu_2$ and replace $(\alpha_1, \alpha_2)$ by $(-\alpha_2, -\alpha_1)$. There are six cases to consider.

\vspace{.1in}

\noindent $\bullet$ If $(s,r) = (3,3)$; we have 
\[
k(it) = e^{-\beta_1 t} \cdot \frac{\alpha_1 \sin (\mu_1 t) + \alpha_2 \sin
  (\mu_2 t) e^{(\beta_1-\beta_2)t}}{\sinh (\beta_1-\beta_2)t },
\] 
therefore
\begin{equation*}
k(it) \sim 
\begin{cases}
  2\alpha_1 \sin (\mu_1 t) e^{(\beta_2 - 2\beta_1)t} + 2\alpha_2 e^{-\beta_1 t} \sin (\mu_2 t),
&t \to +\infty,\\
2\alpha_1 \sin (\mu_1 t) e^{-\beta_2 t} + 2\alpha_2 e^{(\beta_1 - 2\beta_2)t}
\sin (\mu_2 t),  &t \to -\infty .
\end{cases}
\end{equation*}

\noindent When $(s,r) = (1,1)$ the same formulas hold with $\sin (\mu_j t)$ replaced by $t$ for $j=1,2$. And when $(s,r) = (1,3)$ the same formulas hold with $\sin (\mu_1 t)$ replaced by $t$. The above asymptotics immediately conclude the proof in this case.

\noindent $\bullet$ If  $(s,r) = (2,3)$, we may assume $\mu_1 > 0$, otherwise negate $\alpha_1$, so
\[
k(it) = e^{-\beta_1 t} \cdot \frac{\alpha_1 \sinh (\mu_1 t) + \alpha_2 \sin
  (\mu_2 t) e^{(\beta_1-\beta_2)t}}{\sinh (\beta_1-\beta_2)t },
\] 
and therefore

\begin{equation*}
k(it)\sim  
\begin{cases}
    \alpha_1 e^{(\mu_1 + \beta_2 - 2\beta_1)t} + 2\alpha_2 \sin (\mu_2 t) e^{-\beta_1 t} , 
&t \to +\infty,\\
\alpha_1 e^{-(\mu_1+\beta_2) t} + 2 \alpha_2 \sin (\mu_2 t) e^{(\beta_1 - 2\beta_2)t}, &t \to -\infty.
  \end{cases}
\end{equation*}

\noindent If $\alpha_2 \neq 0$ clearly $\beta_1$ and $\beta_2$ are determined. So assume $\alpha_2 = 0$, then from the above asymptotics we conclude that $\alpha_1$, $\mu_1 + \beta_2$ and $\beta_1$ are determined. But note that $k_0:=k(0) = \frac{\mu_1 \alpha_1}{\beta_1 - \beta_2}$, so we have a system ($k_1$ denotes a parameter determined by $k$)

\begin{equation*}
\begin{cases}
\alpha_1 \mu_1 + k_0 \beta_2 = k_0 \beta_1
\\
\mu_1 + \beta_2 = k_1
\end{cases}
\end{equation*}

\noindent Which is not solvable w.r.t. $\mu_1$ and $\beta_2$ \IFF $k_0 = \alpha_1$, but in this case the first equation implies $\beta_1 - \beta_2 = \mu_1$, therefore $k(z) = \alpha_1 e^{i \beta_1 z}$ which is trivial. When $(s,r) = (2,1)$ the asymptotic formulas hold with $\sin (\mu_2 t)$ replaced by $t$ and the same argument applies.

\noindent $\bullet$ If  $(s,r) = (2,2)$, we may assume $\mu_1,\mu_2 > 0$, otherwise negate $\alpha_1,\alpha_2$, so
\[
k(it) = e^{-\beta_1 t} \cdot \frac{\alpha_1 \sinh (\mu_1 t) + \alpha_2 \sinh
  (\mu_2 t) e^{(\beta_1-\beta_2)t}}{\sinh (\beta_1-\beta_2)t },
\] 
therefore
\begin{equation*}
k(it)\sim
\begin{cases}
  \alpha_1 e^{(\mu_1 + \beta_2 - 2\beta_1)t} + \alpha_2 e^{(\mu_2-\beta_1) t}, 
 &t \to +\infty, \\
\alpha_1 e^{-(\mu_1+\beta_2) t} + \alpha_2 e^{-(\mu_2-\beta_1 + 2\beta_2)t},&t \to -\infty.
\end{cases}
\end{equation*}

\noindent If $\alpha_1, \alpha_2 \neq 0$, clearly $\beta_1$ and $\beta_2$ are determined. Assume $\alpha_1 = 0$, then from the above asymptotics we conclude that $\alpha_2, \mu_2 - \beta_1$ and $\beta_2$ are determined. Next, as above we look at $k(0) = \frac{\mu_2 \alpha_2}{\beta_1 - \beta_2}$, and conclude that $\beta_1, \mu_2$ are not determined \IFF $\mu_2 = \beta_1 - \beta_2$ in which case $k$ is trivial. Analogous conclusion holds in the case $\alpha_2=0$.

\end{proof}

\begin{corollary} \label{CORO 3 type 1 modes impossible}
Having three distinct modes $\lambda_1, \lambda_2, \lambda_3 \in i \RR$ is impossible.
\end{corollary}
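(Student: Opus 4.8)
The plan is to bootstrap from the two-mode analysis and then invoke the uniqueness furnished by Proposition~\ref{beta1 and 2 are determined by k PROP}. Suppose, for contradiction, that $\mathcal{b}$ carries three distinct type 1 modes $\lambda_j = 2i\beta_j$, $j=1,2,3$, and that $k$ is nontrivial. The crucial observation is that \emph{any} two of these modes, considered on their own, already force $k$ into the shape \eqref{k(z) two type 1 roots}: for a fixed pair of indices the representation \eqref{k(z) type 1} holds simultaneously for both frequencies, and matching the two expressions determines the otherwise free even parts exactly as in the derivation of \eqref{k(z) two type 1 roots} recorded just before the Proposition. The presence of the remaining third mode plays no role in this step, since the constraints attached to a given pair of modes remain valid regardless of which other modes are present.

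With this in hand I would apply the reduction to two overlapping pairs, for instance $(\lambda_1,\lambda_2)$ and $(\lambda_1,\lambda_3)$. The first pair presents $k$ in the form \eqref{k(z) two type 1 roots} with frequencies $\beta_1,\beta_2$, while the second presents the \emph{same} function $k$ in that form with frequencies $\beta_1,\beta_3$. Proposition~\ref{beta1 and 2 are determined by k PROP} asserts that for a nontrivial kernel of the form \eqref{k(z) two type 1 roots} the pair of frequencies can be read off unambiguously from $k$ itself. Applying this to the two representations of our single $k$ forces $\{\beta_1,\beta_2\} = \{\beta_1,\beta_3\}$, so that two of the three frequencies must coincide, contradicting their assumed distinctness. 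This completes the argument.

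The one point that needs checking is the reduction in the first paragraph, namely that two type 1 modes out of three still reproduce \eqref{k(z) two type 1 roots} — i.e. that the combination of the two \eqref{k(z) type 1} constraints alone pins down the free even functions, without recourse to the third mode's relation. I expect this to be the only (and minor) obstacle. A further scenario that could in principle arise is that one of the pairwise reductions degenerates and renders $k$ trivial; but such a degeneration is already excluded by the standing hypothesis that $k$ is nontrivial, so it cannot occur, and in any event it would only reinforce the conclusion.
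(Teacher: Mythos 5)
Your argument is correct and is precisely the route the paper intends: the corollary is stated without a written proof immediately after Proposition~\ref{beta1 and 2 are determined by k PROP}, the intended deduction being exactly your pairwise application of the representation \eqref{k(z) two type 1 roots} followed by the uniqueness of the frequency pair. The point you flag --- that the two-mode reduction is unaffected by the presence of a third mode --- is indeed harmless, since each mode $\lambda_j$ contributes its own independent set of relations \eqref{5 relations} by linear independence of the functions $y^l e^{\lambda_j y}$.
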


\begin{lemma} \label{two type 1 roots with one mult>1 LEMMA}
Having two distinct type 1 modes, one of them with multiplicity at least two leads to a trivial kernel. In other words, if $k(z)$ can be written in the form \eqref{k(z) type 1 m>=1} and \eqref{k(z) two type 1 roots}, then $k$ is trivial.
\end{lemma}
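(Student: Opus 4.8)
The plan is to play the two given representations of $k$ against each other. The representation \eqref{k(z) two type 1 roots} will force a certain product to be an exponential polynomial (a finite combination of terms $z^\ell e^{\omega z}$), whereas \eqref{k(z) type 1 m>=1}, which applies because the mode of multiplicity at least two is a type~$1$ mode, contains a term that is \emph{not} of this form. That incompatibility will force a coefficient to vanish and collapse $k$ to a trivial kernel. Throughout write $\beta_1$ for the frequency of the mode of multiplicity at least two (by the labeling of case IIa this is $\lambda_1 = 2i\beta_1$, so by Proposition~\ref{type 1 mult >=2 PROP} the representation \eqref{k(z) type 1 m>=1} holds with $\beta = \beta_1$), set $\delta = \beta_1 - \beta_2 \neq 0$, and put $g(z) = e^{-i\beta_1 z} k(z)$, i.e.\ the bracketed factor in \eqref{k(z) type 1 m>=1}.

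First I would dispose of the case $\mu = 0$: here $g$ is the quadratic polynomial $i\alpha z + \kappa_0 + \tfrac{\varkappa}{6} z^2$, so $k = e^{i\beta_1 z} g$ is of the form $e^{i\beta_1 z} p(z)$ with $p$ a polynomial, hence trivial by Definition~\ref{trivial DEF}; no appeal to \eqref{k(z) two type 1 roots} is needed. Now assume $\mu \neq 0$. Multiplying \eqref{k(z) two type 1 roots} by $\sin(\delta z)$ and dividing by $e^{i\beta_1 z}$ yields
\[
\sin(\delta z)\, g(z) = \alpha_1 k_s(\mu_1 z) + \alpha_2 k_r(\mu_2 z)\, e^{-i\delta z},
\]
whose right-hand side, by \eqref{k1, k2 and k3}, is a finite combination of terms $z^\ell e^{\omega z}$, i.e.\ an exponential polynomial. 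On the other hand \eqref{k(z) type 1 m>=1} decomposes $g$ as an exponential polynomial plus $\kappa_0$ times $\tfrac{\sinh \mu z}{z}$ (or $\tfrac{\sin\mu z}{z}$), so $\sin(\delta z)\, g(z)$ equals an exponential polynomial plus $\kappa_0\,\tfrac{\sin(\delta z)\sinh(\mu z)}{z}$ (resp.\ with $\sin$ in place of $\sinh$).

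The crux is then the claim that $\tfrac{\sin(\delta z)\sinh(\mu z)}{z}$, and likewise $\tfrac{\sin(\delta z)\sin(\mu z)}{z}$, is \emph{not} an exponential polynomial when $\mu,\delta\neq 0$. Indeed $\sin(\delta z)\sinh(\mu z)$ is a nonzero combination of the pure exponentials $e^{(\pm i\delta \pm \mu)z}$ with nonzero constant coefficients; if $\tfrac1z\sin(\delta z)\sinh(\mu z) = \sum_k Q_k(z) e^{\nu_k z}$ were an exponential polynomial, then multiplying by $z$ and comparing, via the linear independence of the functions $z^\ell e^{\omega z}$, the coefficient attached to each frequency would force some nonzero constant to equal a polynomial $z Q_k(z)$ having vanishing constant term, a contradiction. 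Consequently $\kappa_0 = 0$, and then $g = i\alpha\sinh(\mu z) + \tfrac{\varkappa}{2\mu}\cosh(\mu z)$ (or its $\sin/\cos$ analogue) is a combination of two exponentials, so $k = e^{i\beta_1 z} g$ is a sum of at most two exponentials and is trivial. The only genuinely technical point is this incompatibility claim; everything else is bookkeeping. I expect the main obstacle to be phrasing it uniformly across the $\sinh$ and $\sin$ subcases — in the latter the frequencies $\pm i\delta \pm \mu$ may coincide (e.g.\ if $\delta = \pm\mu$), but the argument is unaffected since it only needs one surviving frequency with a nonzero constant coefficient, which $\sin(\delta z)\sin(\mu z)$ always has.
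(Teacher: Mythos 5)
Your proof is correct, but it takes a genuinely different route from the paper's. The paper exploits entireness pointwise: since every function in \eqref{k(z) type 1 m>=1} is entire while \eqref{k(z) two type 1 roots} has denominator $\sin((\beta_1-\beta_2)z)$, the numerator must vanish at every $z=\pi n/(\beta_1-\beta_2)$, $n\in\ZZ$; the paper then solves this infinite family of vanishing conditions case by case in $(r,s)$ (only $(2,2)$, $(2,3)$, $(1,2)$ can survive), concludes that $k$ is a trigonometric polynomial, and feeds that back into \eqref{k(z) type 1 m>=1} to land on $k(z)=e^{i\beta z}(i\alpha\sin\mu z+\alpha'\cos\mu z)$. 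You instead clear the denominator once and compare the resulting identity inside the class of exponential polynomials: the numerator of \eqref{k(z) two type 1 roots}, divided by $e^{i\beta_1 z}$, is an exponential polynomial for \emph{every} choice of $r,s$, whereas the term $\kappa_0\,\sin(\delta z)\sinh(\mu z)/z$ (resp.\ with $\sin$) contributed by \eqref{k(z) type 1 m>=1} is not one unless $\kappa_0=0$; your linear-independence argument for this (a nonzero constant coefficient cannot equal $zQ(z)$, which has vanishing constant term) is sound, including your handling of coincident frequencies. This buys a shorter proof that avoids both the case split over $(r,s)$ and the analysis of the solution sets of the vanishing conditions, by isolating the one structural fact that matters. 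Both arguments rely on the same implicit analytic continuation --- the two representations of $k$ are a priori equalities only on $[-2,2]$, and one extends the cleared identity to $\CC$ because both sides are entire --- so it would be worth stating that step explicitly.
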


\begin{proof}
The denominator in \eqref{k(z) two type 1 roots} is zero when $z=\pi n / (\beta_1 - \beta_2)$. If the numerator does not vanish at all of these values then the function in \eqref{k(z) two type 1 roots} is not entire, while all functions \eqref{k(z) type 1 m>=1} are entire. Thus it must hold

\begin{equation*}
\alpha_1 k_s \left(\tfrac{\pi \mu_1 n}{\beta_1 - \beta_2} \right) + (-1)^n \alpha_2 k_r \left(\tfrac{\pi \mu_2 n}{\beta_1 - \beta_2} \right) = 0 \qquad \forall n \in \ZZ.
\end{equation*} 

\noindent This equation can hold in three cases $(r,s) = (2,2), (2,3)$ or $(1,2)$. Let us consider the first one, the other two can be analyzed similarly, and in fact are simpler. The solutions of the above equation for $r=s=2$ are

\begin{enumerate}
\item[(a)] $\mu_j = m_j (\beta_1 - \beta_2)$ with $m_j \in \ZZ$ for $j=1,2$,

\item[(b)] $\alpha_1 = \pm \alpha_2$ , \ $\mu_2 = (2m_1+1) (\beta_1 - \beta_2) \mp \mu_1$.

\end{enumerate}

\noindent In both of these cases $k$ is a trigonometric polynomial. But if $k$ is given by \eqref{k(z) type 1 m>=1} and is a trigonometric polynomial, then $k(z) = e^{i \beta z} (i \alpha \sin \mu z + \alpha' \cos \mu z)$ for some constants $\alpha, \alpha', \beta$ and $\mu$. Showing that $k$ is trivial.

\end{proof}

\begin{lemma} \label{type 2 determines constants LEMMA}
Let $k$ be given by \eqref{k(z) type 2}, then the pair $(|\gamma|, \beta)$ is determined by $k$. 
\end{lemma}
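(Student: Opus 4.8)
The plan is to recover $|\gamma|$ and $\beta$ as analytic invariants of the meromorphic extension of $k$. First I would record the inherent sign ambiguity: replacing $(\gamma,\alpha,\mu)$ by $(-\gamma,-\overline{\alpha},\overline{\mu})$ (and by $(-\gamma,-\overline{\alpha})$ in the $\mu=0$ case) leaves the right-hand side of \eqref{k(z) type 2} unchanged, which is precisely why only $|\gamma|$, and not the sign of $\gamma$, can be hoped for. I therefore fix $\gamma>0$ throughout and aim to show that $\beta$ is a genuine invariant; note that a shift $\beta\mapsto\beta+c$ cannot be compensated by adjusting $\mu$, since $e^{icz}\sinh((\mu+ic)z)\neq\sinh(\mu z)$, so the reduction is not an artifact.

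The main tool is the expansion of $k$ on the right half-plane $\Re z>0$, where $\sinh(2\gamma z)$ has no zeros and $\tfrac{1}{\sinh(2\gamma z)}=2\sum_{\ell\ge0}e^{-2(2\ell+1)\gamma z}$ converges. Substituting this together with the exponential form of the numerator into \eqref{k(z) type 2} exhibits $k$ as a convergent generalized Dirichlet series $k(z)=\sum_j c_j e^{\rho_j z}$ whose exponents, writing $\mu=\mu_1+i\mu_2$, have real parts in $\{\pm\mu_1-(2m+1)\gamma:m\ge0\}$ and imaginary parts in $\{\beta-\mu_2,\beta+\mu_2\}$, the coefficients being $\pm\alpha,\pm\overline{\alpha}$. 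By the linear independence of exponentials used throughout the paper, the pairs $(\rho_j,c_j)$ are uniquely determined by the function $k$, so both the multiset of $\Im\rho_j$ and the multiset of $\Re\rho_j$ are invariants.

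From here $\beta$ comes from the imaginary parts: every $\Im\rho_j$ equals $\beta-\mu_2$ or $\beta+\mu_2$, and since the leading coefficients are $\pm\overline{\alpha}\neq 0$ (indeed $\alpha=0$ forces $k\equiv 0$) both values actually occur; their arithmetic mean is $\beta$, which is thus determined (in the $\mu=0$ form the single frequency is $\beta$). The modulus $|\gamma|$ I would read off either from the common difference $4\gamma$ of the arithmetic progressions formed by the $\Re\rho_j$ at a fixed frequency, or, more invariantly, from the spacing of the poles of $k$, which all lie on the imaginary axis within $\{i\pi n/(2\gamma):n\in\ZZ\}$. Equivalently one may work along $z=it$: the denominator becomes $i\sin(2\gamma t)$ with $\tfrac{\pi}{2\gamma}$-periodic zeros, while the factor $e^{i\beta z}=e^{-\beta t}$ combined with the growth of the numerator produces residue growth rates $|\mu_2|\mp\beta$ as $t\to\pm\infty$, whose difference again returns $\beta$.

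The main obstacle is the bookkeeping in degenerate configurations where the $\rho_j$ collide or the coefficients cancel — for instance $\mu_1=0$, or $\mu$ a real or imaginary multiple of $\gamma$ — since then the apparent pole spacing or real-part spacing can change. I would dispose of these by showing that each such collapse either forces a relation making $k$ a pure combination of exponentials, hence trivial (e.g.\ $\mu=\gamma$ with $\alpha\in\RR$ gives $k=\alpha e^{i\beta z}$, and $\mu=2\gamma$ gives $k=e^{i\beta z}(\alpha e^{-\gamma z}+\overline{\alpha}e^{\gamma z})$), or still leaves at least two exponents at distinct frequencies and two at distinct real parts with nonzero coefficients, so that the mean-of-frequencies extraction of $\beta$ and the progression-difference extraction of $|\gamma|$ stay valid. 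The $\mu=0$ form of \eqref{k(z) type 2} is handled identically, the only difference being the extra polynomial factor $z$, which affects neither the frequencies nor the real-part lattice.
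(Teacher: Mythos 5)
Your proposal is correct in substance but follows a genuinely different route from the paper's. The paper normalizes the signs of $\gamma$ and $\mu_1$, reads off $\alpha$, $\mu_1-\gamma$ and $\beta-\mu_2$ from the single leading term of $k(z)$ as $z\to+\infty$, and then closes the system with the two Taylor data $k(0)$, $k'(0)$, reducing everything to a $4\times4$ linear system whose determinant $(\alpha_1-k_0)^2+\alpha_2^2$ vanishes exactly in the degenerate configuration $\alpha\in\RR$, $\mu_1=\gamma$, where $k$ collapses to a single exponential. You instead expand $1/\sinh(2\gamma z)$ as a geometric series of exponentials on a right half-plane and exploit the whole lattice of exponents: $\beta$ as the mean of the two frequencies and $|\gamma|$ as the common difference of the real-part progressions. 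Your version is more structural and makes the $\gamma\mapsto-\gamma$ ambiguity transparent, at the cost of needing a uniqueness statement for convergent generalized Dirichlet series (a finite-linear-independence argument does not literally apply to the infinite expansion, though peeling off dominant terms by asymptotics fixes this); the paper's version is finite, explicit, and localizes the entire difficulty in one determinant.

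One concrete weak spot: the assertion that both frequencies ``actually occur'' because ``the leading coefficients are $\pm\overline{\alpha}\neq0$'' is not right as written. Only the frequency $\beta-\mu_2$ is guaranteed a surviving leading coefficient $\overline{\alpha}$; the entire family of exponents at $\beta+\mu_2$ cancels identically precisely when $\mu_1=\gamma$ and $\alpha\in\RR$ (the two contributing progressions then coincide term by term with coefficients proportional to $\alpha$ and $-\overline{\alpha}$), and then the mean of the observed frequencies is $\beta-\mu_2$, not $\beta$. You do flag this configuration at the end and correctly note it forces $k=\alpha e^{i(\beta-\mu_2)z}$, trivial, so the lemma survives under the standing nontriviality assumption --- but be aware that this deferred case is exactly where the paper's $\det(A)=0$ computation does its work; it is the crux of the lemma, not bookkeeping. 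Also, the pole-spacing alternative for recovering $|\gamma|$ is not robust, since the meromorphic extension may have no poles at all (e.g.\ $\mu=2\gamma$); the real-part lattice is the argument to keep.
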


\begin{proof}

Let $k$ be given by the first formula, assume $\gamma>0$, otherwise replace $(\gamma, \alpha)$ with $(-\gamma, - \overline{\alpha})$, then

\begin{equation} \label{type 2a asymp}
k(z) \sim 2 \overline{\alpha} z e^{-\gamma z} e^{i \beta z}, \qquad \text{as} \ z \to +\infty ,
\end{equation} 

\noindent so $\alpha, \gamma, \beta$ are determined by $k$. But note that the sign of $\gamma$ is not determined.

Let now $k$ be given by the second formula, write $\mu = \mu_1 + i \mu_2$ and $\alpha = \alpha_1 + i\alpha_2$,

\begin{enumerate}

\item let $\mu_1 \neq 0$, we may assume $\mu_1 > 0$, otherwise we replace $(\alpha, \mu)$ with $(-\alpha,-\mu)$. Also assume $\gamma > 0$, otherwise we replace $(\gamma, \alpha, \mu)$ with $(-\gamma, -\overline{\alpha},\overline{\mu})$, then

\begin{equation} \label{type 2b asymp mu1 not 0}
k(z) \sim \overline{\alpha} e^{(-\gamma + \mu_1)z} e^{i(\beta - \mu_2)z}, \qquad \text{as} \ z \to +\infty ,
\end{equation} 

\noindent so $\alpha, -\gamma + \mu_1$ and $\beta - \mu_2$ are determined by $k$. We then note that $k(0) = \frac{\Re (\alpha \mu)}{\gamma}$ and $k'(0) = i \beta k(0) -  i \Im (\alpha \mu)$. Because of the symmetry of $k$, we know that $k(0) \in \RR$ and $k'(0) \in i \RR$, so let us set $k_0 = k(0)$ and $k_1 = \frac{k'(0)}{i}$, then we obtain the system

\begin{equation*}
\begin{cases}
\alpha_1 \mu_1 - \alpha_2 \mu_2 - k_0 \gamma = 0,
\\
-\alpha_2 \mu_1 - \alpha_1 \mu_2 + k_0 \beta = k_1,
\\
\mu_1 - \gamma = k_2,
\\
-\mu_2 + \beta = k_3,
\end{cases}
\qquad A = 
\begin{pmatrix}
\alpha_1 && -\alpha_2 && -k_0 && 0 \\
-\alpha_2 && -\alpha_1 && 0 && k_0 \\
1 && 0 && -1 && 0 \\
0 && -1 && 0 && 1
\end{pmatrix},
\end{equation*}

\noindent where the unknowns are $\mu_1, \mu_2, \gamma, \beta$ and $k_2, k_3$ are parameters determined by $k$. The system is linear and one can compute $\det (A) = (\alpha_1 - k_0)^2 + \alpha_2^2$. If $\det(A) \neq 0$, then the system has a unique solution and all the constants $\mu_1, \mu_2, \gamma, \beta$ are determined by the function $k$. Of course we see that the signs of $\gamma$ and $\mu_1$ are not determined. 

When $\det(A) = 0$, we get $\alpha_1 = k_0$ and $\alpha_2 = 0$, then (note that $k_0 \neq 0$, because otherwise $k=0$). Now we must have $k_2 = 0$ and $k_3 = \frac{k_1}{k_0}$ and the above system reduces to

\begin{equation*}
\begin{cases}
\mu_1 - \gamma = 0,
\\
-\mu_2 + \beta = k_3.
\end{cases}
\end{equation*}

\noindent So $\alpha$ is real and $\mu_1 = \gamma$, and in this case one can check that the formula reduces to $k(z) = \alpha e^{i (\beta + \mu_2)z}$ which is a trivial kernel.

\item $\mu_1 = 0$, we may assume $\gamma > 0$, otherwise replace $(\gamma, \alpha)$ by $(-\gamma, \overline{\alpha})$, then

\begin{equation} \label{type 2b asymp mu1 0}
k(z) \sim \overline{\alpha} e^{-\gamma z} \left[ e^{i(\beta - \mu_2)z} - e^{i(\beta + \mu_2)z} \right] \qquad \text{as} \ z \to + \infty ,
\end{equation}

so $\alpha, \gamma, \beta, \mu_2$ are determined by $k$. And again we see that the sign of $\gamma$ is not determined. 

\end{enumerate}

\end{proof}

\begin{corollary} \label{two type 2 modes CORO}
Let $\lambda_j = 2 \gamma_j + i 2 \beta_j$, with $\gamma_j \neq 0$ for $j=1,2$. Assume $\lambda_1 \neq \lambda_2$, then $\lambda_2 = - \overline{\lambda_1}$.
\end{corollary}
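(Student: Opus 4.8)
The plan is to exploit the fact that the same kernel $k$ must be compatible with \emph{each} of the two modes separately, and then to invoke the rigidity supplied by Lemma~\ref{type 2 determines constants LEMMA}. First I would record that, since $\Re \lambda_1 = 2\gamma_1 \neq 0$ and $\Re \lambda_2 = 2\gamma_2 \neq 0$, both $\lambda_1$ and $\lambda_2$ are type 2 modes. By Lemma~\ref{type 2 mult>1 impossible LEMMA} a type 2 mode producing a nontrivial kernel has multiplicity one, so in \eqref{b(y) combination of exps} each contributes only a term $\alpha_j e^{\lambda_j y}$ (degree $d_j=0$). Because the functions $y^\ell e^{\lambda_1 y}$ and $y^\ell e^{\lambda_2 y}$ attached to distinct modes are linearly independent, the coefficient relations \eqref{5 relations} extracted from \eqref{sesqui comm rel} decouple and hold separately for $\lambda_1$ and for $\lambda_2$. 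Running the single-mode analysis of Section~\ref{SECT single mode and mult} once with $\lambda=\lambda_1$ and once with $\lambda=\lambda_2$ then forces $k$ to be simultaneously of the form \eqref{k(z) type 2} with parameters $(\gamma_1,\beta_1)$ and with parameters $(\gamma_2,\beta_2)$.

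Next I would apply Lemma~\ref{type 2 determines constants LEMMA} to each representation. The lemma asserts that for a nontrivial kernel of the form \eqref{k(z) type 2} the pair $(|\gamma|,\beta)$ is an invariant of $k$ alone, so comparing the two representations yields $|\gamma_1|=|\gamma_2|$ and $\beta_1=\beta_2$. Since $\lambda_1\neq\lambda_2$ while $\beta_1=\beta_2$, the real parts cannot coincide, hence $\gamma_1\neq\gamma_2$; together with $|\gamma_1|=|\gamma_2|$ this leaves only $\gamma_2=-\gamma_1$. Substituting back,
\begin{equation*}
\lambda_2 = 2\gamma_2 + 2i\beta_2 = -2\gamma_1 + 2i\beta_1 = -\overline{2\gamma_1 + 2i\beta_1} = -\overline{\lambda_1},
\end{equation*}
which is the claim.

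I expect the only genuine subtlety to be the decoupling step: one must verify that, when several modes are present, the single-mode conclusions of Section~\ref{SECT single mode and mult} remain valid mode by mode, i.e. that the form \eqref{k(z) type 2} is forced by $\lambda_1$ alone and, independently, by $\lambda_2$ alone. This rests on the linear independence of the families $\{y^\ell e^{\lambda_j y}\}$, which is precisely what permits reading off \eqref{5 relations} per mode; once that is in place the remainder is direct bookkeeping of what Lemma~\ref{type 2 determines constants LEMMA} pins down. The degenerate branches appearing in that lemma, where $k$ collapses to a pure exponential, are ruled out here by the standing nontriviality assumption on $k$.
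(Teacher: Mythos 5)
Your proposal is correct and follows essentially the same route as the paper: both arguments reduce the corollary to Lemma~\ref{type 2 determines constants LEMMA}, conclude $|\gamma_1|=|\gamma_2|$ and $\beta_1=\beta_2$, and then use $\lambda_1\neq\lambda_2$ to force $\gamma_2=-\gamma_1$. The only step you gloss over is that \eqref{k(z) type 2} contains two sub-formulas ($\mu=0$ and $\mu\neq 0$), and the proof of Lemma~\ref{type 2 determines constants LEMMA} establishes the invariance of $(|\gamma|,\beta)$ separately within each one; the paper therefore first rules out the mixed case, where one mode yields the first formula and the other the second, by observing that their asymptotics as $z\to+\infty$ (a factor of $z$ times one exponential versus pure exponentials) cannot be matched.
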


\begin{proof}
For each $\lambda_j$, $k$ can be given by two formulas from \eqref{k(z) type 2}, let us refer to them as "a" and "b". There are three cases to consider: (a,a); (b,b) and (a,b). By comparing the asymptotics \eqref{type 2b asymp mu1 not 0} and \eqref{type 2b asymp mu1 0} with \eqref{type 2a asymp} we see that they cannot be matched, hence the third case is impossible. Consider the first one, then

\begin{equation*}
k(z) = z e^{i \beta_j z} \cdot \frac{\alpha_j e^{-\gamma_j z} + \overline{\alpha_j} e^{\gamma_j z}}{\sinh (2\gamma_j z)}, \qquad j=1,2.
\end{equation*}

\noindent As we saw $|\gamma_j|$ and $\beta_j$ are determined by $k$, hence we conclude $|\gamma_1| = |\gamma_2|$ and $\beta_1 = \beta_2$. Because $\lambda_1 \neq \lambda_2$ we must have $\gamma_1 = - \gamma_2$. The second case is done analogously.

\end{proof}

\begin{corollary} \label{CORO 3 type 2 modes impossible}
Having three distinct modes $\lambda_1, \lambda_2, \lambda_3 \notin i \RR$ leads to trivial $k$..
\end{corollary}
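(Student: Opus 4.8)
The plan is to deduce the statement directly from the two-mode result in Corollary~\ref{two type 2 modes CORO}, exploiting the fact that the classification of a single type 2 mode applies to each mode present, not merely when there are exactly two. First I would recall that, by Lemma~\ref{type 2 mult>1 impossible LEMMA}, a type 2 mode cannot have multiplicity exceeding one; hence each $\lambda_j = 2\gamma_j + 2i\beta_j$ (with $\gamma_j \neq 0$, $j=1,2,3$) enters \eqref{b(y) combination of exps} through a single term $e^{\lambda_j y}$, and likewise in $\mathcal{c}$. Collecting in \eqref{sesqui comm rel} the coefficient of $e^{\lambda_j y}$ and invoking linear independence of the $e^{\lambda_l y}$ isolates, for each $j$, precisely the single-mode relation \eqref{5 relations} with index $0$; consequently the derivation of Section~\ref{SECT single mode and mult} yields a representation of the \emph{same} function $k$ in the form \eqref{k(z) type 2} for every one of the three modes.

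The decisive step is then a counting argument. By Lemma~\ref{type 2 determines constants LEMMA}, each such representation recovers the pair $(|\gamma_j|, \beta_j)$ from $k$ alone; since all three describe the same kernel, either $k$ is already trivial (the degenerate branches appearing in the proof of that lemma), or $|\gamma_1| = |\gamma_2| = |\gamma_3|$ and $\beta_1 = \beta_2 = \beta_3$. Equivalently, I would apply Corollary~\ref{two type 2 modes CORO} to the pairs $(\lambda_1,\lambda_2)$ and $(\lambda_1,\lambda_3)$, obtaining $\lambda_2 = -\overline{\lambda_1}$ and $\lambda_3 = -\overline{\lambda_1}$, whence $\lambda_2 = \lambda_3$. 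Either route shows that three modes with common $\beta$ and common $|\gamma|$ can differ only in the sign of $\gamma_j$, so at most two of them are distinct; this contradicts the hypothesis that $\lambda_1,\lambda_2,\lambda_3$ are pairwise distinct unless $k$ is trivial, which is the assertion.

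The principal obstacle is not the pigeonhole count, which is immediate, but the verification that the single-mode conclusions transfer faithfully to the simultaneous presence of three modes: one must confirm that extracting the $e^{\lambda_j y}$ coefficient leaves no surviving cross-terms from the other two modes (guaranteed by distinctness of the $\lambda_l$ together with multiplicity one, so that no monomials in $y$ mix the exponentials), and that the change of variables $k(z) = \kappa(z) e^{\lambda_j z/2}$ together with the symmetry \eqref{symmetry for kappas} reproduces \eqref{k(z) type 2} for each $j$. Equally, one must track the degenerate cases inside Lemma~\ref{type 2 determines constants LEMMA} and Corollary~\ref{two type 2 modes CORO}, checking that exactly in those cases where $(|\gamma_j|, \beta_j)$ fails to be determined the kernel collapses to a single exponential, i.e.\ is trivial. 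With these checks in place, no further computation is required.
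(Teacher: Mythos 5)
Your proposal is correct and follows the same route the paper intends: the corollary is stated without proof precisely because it follows immediately from applying Corollary~\ref{two type 2 modes CORO} to the pairs $(\lambda_1,\lambda_2)$ and $(\lambda_1,\lambda_3)$, forcing $\lambda_2 = \lambda_3 = -\overline{\lambda_1}$ and contradicting distinctness. Your additional care about the transfer of the single-mode analysis and the degenerate branches of Lemma~\ref{type 2 determines constants LEMMA} is sound but not a departure from the paper's argument.
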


\begin{lemma} \label{type 1 with mult>1 and type 2 LEMMA}
Having a type 2 mode and a type 1 mode of multiplicity at least two leads to a trivial kernel. In other words, if $k(z)$ can be written in the form \eqref{k(z) type 1 m>=1} and \eqref{k(z) type 2}, then $k$ is trivial.
\end{lemma}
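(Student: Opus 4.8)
The plan is to exploit the rigidity coming from the fact that the two representations describe one and the same function $k$, following the template of the proof of Lemma~\ref{two type 1 roots with one mult>1 LEMMA}. Every function in \eqref{k(z) type 1 m>=1} is \emph{entire}, whereas the type~2 representation \eqref{k(z) type 2} is a priori only meromorphic, its denominator $\sinh(2\gamma z)$ vanishing at the points $z_n=\tfrac{i\pi n}{2\gamma}$, $n\in\ZZ$. Since $k$ is entire, the numerator in \eqref{k(z) type 2} must vanish at every $z_n$ with $n\neq 0$, and this is the constraint I would push to force triviality. Throughout one may assume $\gamma>0$ by the normalization used in Lemma~\ref{type 2 determines constants LEMMA}.

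First I would dispose of the case $\mu=0$ (first line of \eqref{k(z) type 2}). Here the vanishing condition at $z_n$ reads $\alpha e^{-i\pi n/2}+\overline\alpha e^{i\pi n/2}=2\Re\!\big(\alpha e^{-i\pi n/2}\big)=0$ for all $n\neq 0$; taking $n=1$ and $n=2$ gives $\Im\alpha=\Re\alpha=0$, hence $\alpha=0$ and $k\equiv 0$. The same conclusion is visible from the asymptotics \eqref{type 2a asymp}, which show that this $k$ decays to zero along both real directions, something no nonzero function in \eqref{k(z) type 1 m>=1} can do.

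For the second line of \eqref{k(z) type 2} I would combine a growth comparison on the real axis with the vanishing condition. Write $\mu=\mu_1+i\mu_2$. By \eqref{type 2b asymp mu1 not 0}--\eqref{type 2b asymp mu1 0} the type~2 kernel behaves like $|\alpha|\,e^{(\mu_1-\gamma)|z|}$ as $z\to\pm\infty$ along $\RR$, i.e. it grows or decays at the \emph{same} exponential rate $\mu_1-\gamma$ in both directions. On the other hand, along $\RR$ the three functions in \eqref{k(z) type 1 m>=1} are, respectively, polynomially growing, exponentially growing like $e^{|\mu|\,|z|}$, and bounded (the factor $e^{i\beta z}$ has unit modulus). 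If $\mu_1<\gamma$ the type~2 kernel decays to zero in both directions, which is impossible for a nonzero function of the form \eqref{k(z) type 1 m>=1}, so $k\equiv 0$. This reduces matters to $\mu_1\geq\gamma$, where the only compatible type~1 form is the $\sinh$-form with $|\mu|=\mu_1-\gamma$ (the borderline case $\mu_1=\gamma$ being matched against the bounded $\sin$-form or the polynomial form).

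In the remaining case the two exponential growth rates coincide, and this matching is the main obstacle. Here I would return to the vanishing condition: using $\sinh(\mu z_n)=i\sin(\pi n\mu/2\gamma)$ it becomes $\Re\!\big(\alpha e^{-i\pi n/2}\sin(\pi n\mu/2\gamma)\big)=0$ for all $n\neq 0$. I would analyze this together with the finer asymptotic data — the carrier frequencies $e^{i\beta_j z}$ and the subdominant exponential terms of the two representations — to pin down $\alpha,\mu,\gamma$. Concretely, matching the full set of exponents forces the numerator of \eqref{k(z) type 2} to be a multiple of $\sinh(2\gamma z)$ times a single exponential, which is exactly the degenerate situation ($\det A=0$) isolated in the proof of Lemma~\ref{type 2 determines constants LEMMA}, where $k$ collapses to $\alpha e^{i(\beta+\mu_2)z}$, a trivial kernel. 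The delicate point, requiring the most care, is to rule out a genuine exponential-rate coincidence between an honest (non-degenerate) type~2 kernel and a $\sinh$-type~1 kernel; I expect this to follow by comparing the two subdominant exponents as $z\to+\infty$ and $z\to-\infty$, which over-determines the parameters unless $k$ reduces to a single exponential.
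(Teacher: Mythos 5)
Your overall strategy --- the type-1 forms \eqref{k(z) type 1 m>=1} are entire while the type-2 form \eqref{k(z) type 2} has poles unless its numerator cooperates, combined with matching exponential asymptotics along the real axis --- is the same as the paper's, and your treatment of the easy cases is sound: the $\mu=0$ line of \eqref{k(z) type 2} forces $\alpha=0$, and a strict mismatch of growth rates kills the kernel. (One small inaccuracy there: it is not true that no nonzero function of the form \eqref{k(z) type 1 m>=1} decays to zero in both directions --- $e^{i\beta z}\kappa_0\sin(\mu z)/z$ does --- but since it decays only like $1/|z|$ it still cannot equal an exponentially decaying type-2 kernel, so your conclusion in that sub-case survives.)

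The genuine gap is precisely the case you yourself flag as ``the delicate point'': when the rate $\mu_1-\gamma$ coincides with the growth rate of the $\sinh$-type-1 form. There you write ``I would analyze this'' and ``I expect this to follow,'' which is a plan rather than a proof, and the outcome you predict is not quite what actually happens. The paper completes this case by matching the leading asymptotics as $z\to+\infty$ to obtain $-\gamma+\mu_1=\mu_0$, $\beta_2-\mu_2=\beta_1$ and $i\alpha_1+\tfrac{\varkappa}{2\mu_0}=2\overline{\alpha_2}$, then substituting these constraints back into the full identity \eqref{(1^2, 2) main equation}, expanding every hyperbolic function into exponentials, and using linear independence to force $\kappa_0=0$, $\mu_2=0$ and finally $\mu_1=2\gamma$. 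The surviving kernel is $e^{i\beta_1 z}\bigl[i\alpha_1\sinh(\mu_0 z)+\tfrac{\varkappa}{2\mu_0}\cosh(\mu_0 z)\bigr]$: the numerator of \eqref{k(z) type 2} becomes $\sinh(2\gamma z)$ times a \emph{two}-term exponential sum, not the single-exponential $\det A=0$ degeneracy you import from Lemma~\ref{type 2 determines constants LEMMA}. It is still trivial in the sense of Definition~\ref{trivial DEF}, so the lemma is unharmed, but your argument neither derives these constraints nor shows that no other nontrivial matching exists; the subdominant-exponent comparison you gesture at must actually be carried out, and the pole-vanishing condition alone cannot substitute for it, since the degenerate kernels above do satisfy it. Until that computation is done, the proof is incomplete.
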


\begin{proof}

So $\lambda_1 = i 2 \beta_1$ and $\lambda_2 = 2\gamma + i2 \beta_2$ with $\gamma \neq 0$.
All the functions in \eqref{k(z) type 1 m>=1} are entire, and one can easily check that the first function of \eqref{k(z) type 2} is entire \IFF $\alpha=0$, which leads to $k=0$. So let us consider the case when $k$ is given by the second formula:

\begin{equation} \label{(1^2, 2) main equation}
k(z) =
e^{i \beta_2 z} \cdot \frac{\alpha_2 e^{- \gamma z} \sinh(\mu z) + \overline{\alpha_2} e^{\gamma z} \sinh(\overline{\mu} z)}{\sinh (2 \gamma z)} =e^{i \beta_1 z}
\begin{cases}
i\alpha_1 z + \kappa_0 + \frac{\varkappa}{6} z^2,
\\
i \alpha_1 \sinh \mu_0 z + \kappa_0 \frac{\sinh \mu_0 z}{z} + \frac{\varkappa}{2\mu_0} \cosh \mu_0 z,
\\
i \alpha_1 \sin \mu_0 z + \kappa_0 \frac{\sin \mu_0 z}{z} - \frac{\varkappa}{2\mu_0} \cos \mu_0 z,
\end{cases}
\end{equation}

\noindent where $\mu_0 (\neq 0), \alpha_1, \kappa_0, \varkappa \in \RR$, and write $\mu = \mu_1 + i \mu_2$. 

\vspace{.1in}

\noindent \textbf{Case 1}: if $\mu_1 \neq 0$, may assume $\mu_1 > 0$ and $\gamma > 0$. If $k$ is given by the

\begin{enumerate}

\item 1st formula, then comparing the asymptotics we see that $\alpha_1 = \varkappa = 0$, then for the LHS $k(z) \sim \kappa_0 e^{i \beta_1 z}$. Again comparing we find $\overline{\alpha_2} = \kappa_0$, $-\gamma + \mu_1 = 0$ and $\beta_2 - \mu_2 = \beta_1$. The last two conditions can be rewritten as $\lambda_2 - \lambda_1 = 2 \mu$, and so $k(z) = \kappa_0 e^{i \beta_1 z}$, which is trivial.

\item 2nd formula, we may assume $\mu_0 >0$, otherwise negate $(\alpha_1, \kappa_0, \varkappa)$, then 

$k(z) \sim \frac{1}{2} (i\alpha_1 + \frac{\varkappa}{2 \mu_0}) e^{\mu_0 z} e^{i\beta_1 z}$, comparing with \eqref{type 2b asymp mu1 not 0} we conclude

\begin{equation*}
-\gamma + \mu_1 = \mu_0, \qquad \beta_2 - \mu_2 = \beta_1, \qquad i\alpha_1 + \tfrac{\varkappa}{2 \mu_0} = 2 \overline{\alpha_2} ,
\end{equation*} 

with these, in \eqref{(1^2, 2) main equation} we express $\sinh$ and $\cosh$ in terms of exponentials, by linear independence we conclude that $\kappa_0=0$, and obtain

\begin{equation*}
- \overline{\alpha_2} e^{(\gamma - \mu_1) z} + \alpha_2 e^{(\gamma - \mu_1)z} = e^{i2\mu_2 z} \left[ \alpha_2 e^{(-3\gamma + \mu_1)z} - \overline{\alpha_2} e^{(-3\gamma - \mu_1) z} \right] .
\end{equation*} 

Hence $\mu_2 = 0$, then using that $\gamma, \mu_1 \neq 0$ we deduce that the above relation is possible (with $\alpha_2 \neq 0$) \IFF $\mu_1 = 2\gamma$. Thus $k(z) = e^{i\beta_1 z} \left[ i \alpha_1 \sinh \mu_0 z +  \tfrac{\varkappa}{2\mu_0} \cosh \mu_0 z \right]$ is trivial.

\item 3rd formula, we may assume $\mu_0 >0$, otherwise negate $(\alpha_1, \kappa_0, \varkappa)$, then 

$k(z) \sim e^{i\beta_1 z} \left[ ( \frac{\alpha_1}{2} - \frac{\varkappa}{4 \mu_0}) e^{i\mu_0 z} - ( \frac{\alpha_1}{2} + \frac{\varkappa}{4 \mu_0}) e^{-i\mu_0 z} \right] $, comparing this with \eqref{type 2b asymp mu1 not 0} we conclude $-\gamma + \mu_1 = 0$ and

\begin{enumerate}
\item $\beta_1 + \mu_0 = \beta_2-\mu_2$, \ $\frac{\alpha_1}{2} - \frac{\varkappa}{4 \mu_0} = \overline{\alpha_2}$ and $\frac{\alpha_1}{2} + \frac{\varkappa}{4 \mu_0} = 0$ , or

\item $\beta_1 - \mu_0 = \beta_2-\mu_2$, \ $\frac{\alpha_1}{2} - \frac{\varkappa}{4 \mu_0} = 0$ and $\frac{\alpha_1}{2} + \frac{\varkappa}{4 \mu_0} = -\overline{\alpha_2}$
\end{enumerate}

Let us consider the first option, in that case \eqref{(1^2, 2) main equation} simplifies to $\kappa_0 e^{i \beta_1 z} \frac{\sin \mu_0 z}{z} = 0$ which implies $\kappa_0 = 0$, and so $k(z) = \alpha_1 e^{i(\beta_1 + \mu_0)}$. The other case is done analogously.

\end{enumerate}

\vspace{.1in}

\noindent \textbf{Case 2}: if $\mu_1 = 0$, we may assume $\gamma > 0$. If $k$ is given by the 1st or 3rd formulas, comparing the asymptotics of LHS with \eqref{type 2b asymp mu1 0} we conclude $\gamma = 0$, which is a contradiction, so these cases lead to $k=0$. Now let $k$ be given by the second formula, again w.l.o.g let $\mu_0 > 0$, then we see that the asymptotics cannot be matched because in \eqref{type 2b asymp mu1 0} $e^{i(\beta_2 \pm \mu_2)z}$ are linearly independent, hence $k=0$. 

\end{proof}

\begin{lemma} \label{type 1 and 2 LEMMA}
Let $\lambda_1 = i 2 \beta_1$ and $\lambda_2 = 2\gamma + i2 \beta_2$, with $\gamma \neq 0$, then $\beta_1 = \beta_2 =: \beta$ and 

\begin{equation} \label{k(z) type 1 and 2}
k(z) = \alpha e^{i \beta z} \frac{k_r(\mu z)}{\sinh \gamma z}, \quad r \in \{1,2,3\},
\end{equation}

\noindent where $\alpha, \mu \in \RR$ and $k_r$ is defined in \eqref{k1, k2 and k3}.

\end{lemma}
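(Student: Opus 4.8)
The strategy is to use both available representations of $k$ at once: the one furnished by the type~1 mode $\lambda_1$ and the explicit one furnished by the type~2 mode $\lambda_2$, and to read off what the type~1 structure forces on the type~2 formula. By Lemma~\ref{type 2 mult>1 impossible LEMMA} the mode $\lambda_2$ has multiplicity one, so $k$ is one of the two functions in \eqref{k(z) type 2}; by Lemma~\ref{type 1 with mult>1 and type 2 LEMMA} a multiplicity $\ge 2$ type~1 mode makes $k$ trivial, so I may assume $\lambda_1$ has multiplicity one and hence $k$ is simultaneously of the form \eqref{k(z) type 1}. Concretely, $e^{-i\beta_1 z}k$ has a free real even part $\kappa_+$ and an odd part $\kappa_-$ which is \emph{entire} and solves a second order constant coefficient ODE, so $\kappa_-\in\{cz,\ c\sinh\mu_0 z,\ c\sin\mu_0 z\}$.

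First I would set $F(z)=e^{-i\beta_2 z}k(z)$ from \eqref{k(z) type 2} and split it into even and odd parts $F_\pm$. For the second ($\mu\neq0$) formula a short computation gives $F_+ = (\alpha\sinh\mu z + \overline\alpha\sinh\overline\mu z)/(2\sinh\gamma z)$ and $F_- = -(\alpha\sinh\mu z - \overline\alpha\sinh\overline\mu z)/(2\cosh\gamma z)$, so that $F_+$ carries exactly the poles at the zeros of $\sinh\gamma z$ and $F_-$ exactly those at the zeros of $\cosh\gamma z$; for the first formula one gets $F_+=az/\sinh\gamma z$ and $F_-=-ibz/\cosh\gamma z$ with $\alpha=a+ib$. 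Writing $\delta=\beta_2-\beta_1$, the odd part of $e^{-i\beta_1 z}k=e^{i\delta z}F$ is $\kappa_-=i\sin(\delta z)F_+ + \cos(\delta z)F_-$, and the type~1 structure demands that this be entire.

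The key deductions all come from entireness of $\kappa_-$. At the zeros of $\cosh\gamma z$ the factor $\cos(\delta z)$ is a hyperbolic cosine of a real argument, hence $\ge 1$, while $F_+$ is regular there; therefore $F_-$ must itself be entire, which for the first formula already forces $b=0$. At the nonzero zeros of $\sinh\gamma z$ the factor $\sin(\delta z)$ is $i$ times a hyperbolic sine of a nonzero real number, vanishing only when $\delta=0$; hence either $\delta=0$ or $F_+$ is entire, and in the latter case $F=F_++F_-$ is entire, so $k$ collapses to an exponential polynomial and is trivial. Thus in the nontrivial case $\beta_1=\beta_2=:\beta$ and $\kappa_-=F_-$. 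Imposing finally that the entire $F_-$ also solve the second order ODE, I would multiply by $\cosh\gamma z$ and match exponentials in $F_-\cosh\gamma z = -\tfrac12(\alpha\sinh\mu z-\overline\alpha\sinh\overline\mu z)$: each nonzero candidate $c\sinh\mu_0 z$ or $c\sin\mu_0 z$ forces $\gamma\in i\RR$, and the candidate $cz$ produces a polynomial factor with no counterpart on the right, all contradicting $\gamma\in\RR\setminus\{0\}$, so $F_-\equiv 0$. This leaves $\alpha\sinh\mu z\equiv\overline\alpha\sinh\overline\mu z$, whose only solutions are $\alpha,\mu\in\RR$ or $\alpha,\mu\in i\RR$; substituting back and cancelling the resulting factor $\cosh\gamma z$ against $\sinh2\gamma z=2\sinh\gamma z\cosh\gamma z$ turns \eqref{k(z) type 2} into $\alpha e^{i\beta z}\sinh(\mu z)/\sinh\gamma z$ ($r=3$) or $\alpha e^{i\beta z}\sin(\mu z)/\sinh\gamma z$ ($r=2$), while the first formula of \eqref{k(z) type 2} with $b=0$ becomes $\alpha e^{i\beta z} z/\sinh\gamma z$ ($r=1$); these are exactly \eqref{k(z) type 1 and 2} with $k_r$ as in \eqref{k1, k2 and k3}.

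I expect the main obstacle to be the bookkeeping in this last step: ruling out a nonzero odd entire part $\kappa_-=F_-$ through the exponential matching, and cleanly dispatching the degenerate subcase in which $F_+$ is entire (so $k$ is a trivial exponential polynomial). This is precisely where the reality of $\gamma$ and the three explicit forms are pinned down, and it must be done carefully enough to cover both the generic ($\Re\mu\neq0$) and the $\Re\mu=0$ configurations in the matching.
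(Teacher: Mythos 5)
Your argument is correct in outline and reaches the right conclusion, but it runs along a genuinely different track from the paper's. The paper isolates the \emph{even} part: it solves for $\kappa_+(z)=e^{-i\beta_1 z}k(z)-\kappa_-(z)$ using the type~2 formula, notes that this expression automatically satisfies $\kappa_+(-z)=\overline{\kappa_+(z)}$, and then imposes $\Im\kappa_+=0$; the constants $\Delta\beta$, $\alpha_2$, $\alpha'$ are then extracted by comparing asymptotics of the resulting identities \eqref{(1,2) with 2a}, \eqref{(1,2) with 2b}, \eqref{(1,2) with 2b and mu1=0} as $z\to+\infty$ along the real axis. You instead work with the \emph{odd} part and exploit meromorphy on the imaginary axis: since $\cos(\delta z)$ is nonzero at the zeros of $\cosh\gamma z$ and $\sin(\delta z)$ is nonzero at the nonzero zeros of $\sinh\gamma z$ unless $\delta=0$, entireness of $\kappa_-=i\sin(\delta z)F_++\cos(\delta z)F_-$ forces $F_-$ entire and then $\delta=0$ (or $F$ entire). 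Your decomposition of $F$ into $F_+$ with poles only at zeros of $\sinh\gamma z$ and $F_-$ with poles only at zeros of $\cosh\gamma z$ is correct, and this pole-separation argument is a clean, arguably more transparent way to obtain $\beta_1=\beta_2$ than the paper's real-axis asymptotics; the trade-off is that the paper's single asymptotic comparison simultaneously pins down the remaining constants, whereas you need a separate exponential-matching step afterwards.

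That final step is where your write-up has a genuine gap. The claim that each nonzero candidate $c\sinh\mu_0 z$ or $c\sin\mu_0 z$ for $F_-$ ``forces $\gamma\in i\RR$'' is false: for the $\sin$ candidate, the identity $-\tfrac12\left(\alpha\sinh\mu z-\overline{\alpha}\sinh\overline{\mu}z\right)=i\alpha_1\sin(\mu_0 z)\cosh(\gamma z)$ \emph{is} solvable with $\gamma$ real, namely $\gamma=\Re\mu$, $\mu_0=\Im\mu$, $\alpha=-\alpha_1\in\RR$. In that branch $F_-\not\equiv0$; however, substituting back one finds $\alpha e^{-\gamma z}\sinh(\mu z)+\overline{\alpha}e^{\gamma z}\sinh(\overline{\mu}z)=\alpha e^{-i\mu_0 z}\sinh(2\gamma z)$, so $k(z)=\alpha e^{i(\beta-\mu_0)z}$ is trivial --- this is precisely the degenerate case the paper dispatches parenthetically ($\mu_1=\gamma$, $\alpha'=\alpha_1$). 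So the correct conclusion of your matching is ``$F_-\equiv0$ or $k$ is trivial,'' not ``$F_-\equiv0$''; since the lemma sits under the standing nontriviality hypothesis the result survives, but as written your dichotomy is wrong. Separately, the assertion that $F$ entire implies $k$ is an exponential polynomial (hence trivial) is stated without proof; it requires an argument that an exponential sum with four frequencies vanishing on the full arithmetic progression of zeros of $\sinh(2\gamma z)$ must be divisible by $\sinh(2\gamma z)$ as an exponential polynomial (or an appeal to Ritt's quotient theorem). Both points are fixable, but both must be fixed for the proof to close.
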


\begin{proof}
So $k$ is given by both of the forms \eqref{k(z) type 2} and \eqref{k(z) type 1}. Assume $k$ is given by the first formula of \eqref{k(z) type 2}, then we can find

\begin{equation*}
\kappa_+(z) = z e^{i \Delta \beta z} \frac{\alpha e^{-\gamma z} + \overline{\alpha} e^{\gamma z}}{\sinh(2\gamma z)} - i\alpha' k_r(\mu' z), \quad r \in \{1,2,3\} ,
\end{equation*}

\noindent where $\Delta \beta = \beta_2-\beta_1$, $ 0 \neq \mu', \alpha' \in \RR$. It is easy to check that $\kappa_+$ as above satisfies $\kappa_+(-z) = \overline{\kappa_+(z)}$, hence $\kappa_+$ is real valued \IFF it is even, and with $\alpha = \alpha_1 + i \alpha_2$ the imaginary part of $\kappa_+$ being zero reads

\begin{equation} \label{(1,2) with 2a}
z\alpha_1 \frac{\sin (\Delta \beta z)} {\sinh(\gamma z)} - z\alpha_2 \frac{\cos (\Delta \beta z)} {\cosh(\gamma z)} = \alpha' k_r(\mu' z) . 
\end{equation} 

\noindent We may assume $\gamma > 0$, otherwise replace $(\gamma, \alpha_1)$ with $(-\gamma, -\alpha_1)$. Assume $k \neq 0$, note that

\begin{equation*}
\text{LHS} \sim 2 z e^{- \gamma z} [\alpha_1 \sin (\Delta \beta z) - \alpha_2 \cos (\Delta \beta z)], \qquad \text{as} \ z \to + \infty.
\end{equation*}

\noindent Comparing this with the asymptotic of RHS for $r=1,2,3$ we conclude that \eqref{(1,2) with 2a} is possible \IFF $\Delta \beta = 0$ and $\alpha_2 = \alpha' =0$. And we see that $k$ is given by \eqref{k(z) type 1 and 2} with $r=1$.

Assume now $k$ is given by the second formula of \eqref{k(z) type 2}, then

\begin{equation*}
\kappa_+(z) = e^{i \Delta \beta z} \cdot \frac{\alpha e^{- \gamma z} \sinh(\mu z) + \overline{\alpha} e^{\gamma z} \sinh(\overline{\mu} z)}{\sinh (2 \gamma z)} - i\alpha' k_r(\mu' z), \qquad r \in \{1,2,3\} .
\end{equation*}

\noindent Write $\mu= \mu_1 + i \mu_2$ and $\alpha = \alpha_1 + i \alpha_2$, w.l.o.g. let $\gamma > 0$, assume $\mu_1 \neq 0$ then we can assume $\mu_1 > 0$; again $\kappa_+$ being even and real valued are equivalent and $\Im \kappa_+ = 0$ reads

\begin{equation} \label{(1,2) with 2b}
\begin{split}
&\frac{\sin (\Delta \beta z)} {\sinh(\gamma z)} [\alpha_1 \sinh (\mu_1 z) \cos (\mu_2 z) - \alpha_2 \cosh (\mu_1 z) \sin (\mu_2 z)] -
\\
- &\frac{\cos (\Delta \beta z)} {\cosh(\gamma z)} [\alpha_1 \cosh (\mu_1 z) \sin (\mu_2 z) + \alpha_2 \sinh (\mu_1 z) \cos (\mu_2 z)] = \alpha' k_r(\mu' z).
\end{split}
\end{equation} 

\noindent We note that as $z \to \infty$

\begin{equation*}
\text{LHS} \sim e^{(-\gamma + \mu_1) z} \left[ \alpha_1 \sin (\Delta \beta - \mu_2) z - \alpha_2 \cos (\Delta \beta - \mu_2)z \right] ,
\end{equation*}

\noindent comparing this with the asymptotic of RHS for r=1,2,3 we conclude that \eqref{(1,2) with 2b} is possible for non-trivial $k$ \IFF $\Delta \beta = \mu_2$ and $\alpha_2 = \alpha' =0$. (For example when $r=2$, \eqref{(1,2) with 2b} is also possible when $\mu_1 = \gamma$, $\alpha_2=0$, $\alpha'=\alpha_1$ and $\Delta \beta - \mu_2 = \mu'$ but in this case one easily checks that $k$ is trivial). Now \eqref{(1,2) with 2b} reduces to

\begin{equation*}
\sin (2 \mu_2 z) \left[ \frac{\sinh \mu_1 z}{\sinh \gamma z} - \frac{\cosh \mu_1 z}{\cosh \gamma z} \right] = 0 .
\end{equation*}

\noindent If the second factor is zero, we must have $\gamma = \mu_1$ and in this case $k$ reduces to a trivial kernel. So $\mu_2 = 0$, and $k$ is given by \eqref{k(z) type 1 and 2} with $r=3$. Let now $\mu_1 = 0$, then \eqref{(1,2) with 2b} becomes

\begin{equation} \label{(1,2) with 2b and mu1=0}
- \sin (\mu_2 z) \left[ \alpha_2 \frac{\sin \Delta \beta z}{\sinh \gamma z} + \alpha_1 \frac{\cos \Delta \beta z}{\cosh \gamma z} \right] = \alpha' k_r(\mu' z) .
\end{equation}

\noindent We note that as $z \to \infty$

\begin{equation*}
\text{LHS} \sim -2 e^{-\gamma z} \sin (\mu_2 z) \left[ \alpha_2 \sin (\Delta \beta  z) + \alpha_1 \cos (\Delta \beta z) \right] ,
\end{equation*}

\noindent comparing this with the asymptotics of RHS for r=1,2,3 we find that \eqref{(1,2) with 2b and mu1=0} is possible for non-trivial $k$ \IFF $\Delta \beta = 0$ and $\alpha_1 = \alpha' =0$. And $k$ is given by \eqref{k(z) type 1 and 2} with $r=2$.

\end{proof}

\begin{corollary}
Having three distinct modes $\lambda_1, \lambda_2 \in i\RR$ and $\lambda_3 \notin i\RR$ is impossible.
\end{corollary}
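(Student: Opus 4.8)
The plan is to obtain the statement as a double application of Lemma~\ref{type 1 and 2 LEMMA}, so that essentially no new computation is required. The observation I would record first is that, because the functions $y^j e^{\lambda_l y}$ belonging to distinct modes $\lambda_l$ are linearly independent, the relations \eqref{5 relations} hold separately for each mode occurring in $\mathcal{b}$. Consequently, for every single mode the top relation \eqref{y^m relation and mu def} (the index $j=m$ attached to that mode) is available in isolation, and it forces $k$ into the single-mode shape dictated by that mode alone: a type 1 mode $\lambda_l = 2i\beta_l$ forces $k$ to have the form \eqref{k(z) type 1} with this value of $\beta_l$, while a type 2 mode $\lambda_l = 2\gamma_l + 2i\beta_l$ (with $\gamma_l \neq 0$) forces $k$ to have the form \eqref{k(z) type 2}. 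This holds irrespective of the multiplicities, since only the $j=m$ relation for the mode in question is used.

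Assume now, toward a contradiction, that $k$ has three distinct modes $\lambda_1 = 2i\beta_1$ and $\lambda_2 = 2i\beta_2$ of type 1 together with $\lambda_3 = 2\gamma + 2i\beta_3$ of type 2, $\gamma \neq 0$. By the preceding remark, $k$ is simultaneously of the type 1 form \eqref{k(z) type 1} with parameter $\beta_1$ and of the type 2 form \eqref{k(z) type 2} with parameters $\gamma, \beta_3$. This is precisely the hypothesis of Lemma~\ref{type 1 and 2 LEMMA}, applied to the pair consisting of the type 1 mode $\lambda_1$ and the type 2 mode $\lambda_3$; its conclusion gives $\beta_1 = \beta_3$. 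Running the identical argument with $\lambda_2$ in place of $\lambda_1$ yields $\beta_2 = \beta_3$.

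Combining the two equalities forces $\beta_1 = \beta_2$, hence $\lambda_1 = \lambda_2$, contradicting the assumed distinctness of the two type 1 modes. Therefore the configuration of two type 1 modes together with one type 2 mode cannot occur, which is the claim.

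I expect the only point requiring care to be the justification, made in the first paragraph, that the presence of a single mode forces $k$ into its corresponding single-mode form \emph{independently} of the remaining modes; this is exactly what the derivation of \eqref{5 relations} and \eqref{y^m relation and mu def} in Section~\ref{SECT single mode and mult} provides, since those relations are obtained by isolating the coefficient of a fixed term $y^j e^{\lambda_l y}$. Once this is granted, the corollary follows with no genuine computational obstacle, the substantive work having already been carried out in Lemma~\ref{type 1 and 2 LEMMA}.
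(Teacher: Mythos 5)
Your proposal is correct and follows exactly the route the paper intends: the corollary is stated immediately after Lemma~\ref{type 1 and 2 LEMMA} precisely so that a double application of that lemma (to the pairs $(\lambda_1,\lambda_3)$ and $(\lambda_2,\lambda_3)$) yields $\beta_1=\beta_3$ and $\beta_2=\beta_3$, hence $\lambda_1=\lambda_2$, a contradiction. Your preliminary justification that each mode independently forces its single-mode form of $k$ via linear independence of the $y^j e^{\lambda_l y}$ is also the mechanism the paper relies on throughout Section~\ref{SECT multiple modes}.
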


\subsection{Item 1, $\gamma \neq 0$} \label{SECT item 1}

The previous analysis shows that case IV is only possible when we have exactly three modes $\lambda_1, \lambda_2 \notin i\RR$ and $\lambda_3 \in i\RR$ with multiplicities 1, that is $d_j=0$ for $j=1,2,3$. Moreover, by Corollary~\ref{two type 2 modes CORO} and Lemma~\ref{type 1 and 2 LEMMA} we conclude that $\lambda_1 = 2 \gamma + 2i\beta , \ \lambda_2 = -2\gamma + 2i\beta , \ \lambda_3 = 2i\beta$ and $k(z)$ is given by \eqref{k(z) type 1 and 2}. Invoking Remark~\ref{REM multiplier sesqui} let us w.l.o.g. assume $\beta = 0$. Thus,

\begin{equation*}
\lambda_1 = 2 \gamma ,\quad \lambda_2 = -2\gamma, \quad \lambda_3 = 0,
\qquad \text{and} \quad k(z) = \frac{k_r(\mu z)}{\sinh \gamma z}, \quad r \in \{1,2,3\} ,
\end{equation*} 

\noindent where $k_r$ is defined in \eqref{k1, k2 and k3}, moreover $\mathcal{b}(y) = \cosh(2 \gamma y) - \cosh(2 \gamma)$. Because of \eqref{c relation when k0 not 0}, $\mathcal{c}$ has the following form 

\begin{equation*}
\mathcal{c}(y)=(c_1 y + d_1) e^{\lambda_1 y} + (c_2 y + d_2) e^{\lambda_2 y} + (c_3y+d_3) e^{\lambda_3 y} + c_4 e^{\tau y} ,
\end{equation*}

\noindent where $\tau$ is different from all $\lambda_j$'s. Substituting these expressions into \eqref{sesqui comm rel} and looking at linearly independent parts it is easy to conclude that $c_1 = c_2 = c_3 = c_4 = 0$, and $d_1 = \frac{\lambda_1^2 + 4 \mu^2}{8}, \ d_2 = \frac{\lambda_2^2 + 4 \mu^2}{8}$ if in the formula for $k$ we have $r=2$. When $r=3$ in the expressions of $d_1, d_2$; $\mu$ should be replaced by $i \mu$ and when $r=1$, in those formulas $\mu=0$. This concludes item 1 of Theorem~\ref{THM sesqui comm} in the case $\gamma \neq 0$.

\subsection{Item 3} \label{SECT item 3}

Finally we consider the case III, because of the boundary conditions one can find that $\lambda_2 - \lambda_1 = i\pi n$ with $0 \neq n \in \ZZ$, therefore $\lambda_1, \lambda_2 \in i\RR$ (otherwise by Corollary~\ref{two type 2 modes CORO} and Lemma~\ref{type 1 and 2 LEMMA} the difference $\lambda_2 - \lambda_1$ is real). Let us now take $\lambda_1 = 2i (\beta + \tfrac{\pi n}{4})$ and $\lambda_2 = 2i (\beta - \tfrac{\pi n}{4})$ with some $\beta \in \RR$. In this case we find $\mathcal{b}(y) = e^{2i \beta y} \sin \left( \tfrac{\pi n (y-1)}{2} \right)$ and by \eqref{k(z) two type 1 roots} 

\begin{equation} \label{k item 2 sesqui}
k(z) = e^{i \beta z} \frac{\alpha_1 k_s(\mu_1 z) e^{i\pi n z/4} + \alpha_2 k_r(\mu_2 z) e^{-i\pi n z/4}}{\sin (\pi n z /2)}, \quad r,s \in \{1,2,3\}.
\end{equation}

\noindent From \eqref{c relation when k0 not 0}, $\mathcal{c}$ has the form

\begin{equation*}
\mathcal{c}(y) = (c_1 y + d_1) e^{\lambda_1 y} + (c_2 y + d_2) e^{\lambda_2 y} + c_3 e^{\tau y} ,
\end{equation*}

\noindent with $\tau \neq \lambda_j$, note that also $\tau = \frac{2 k'(0)}{k(0)} \in i \RR$. The denominator of $k$ has zeros at $z = \frac{2 m}{n}$ for $m \in \ZZ$, since we want $k$ to be smooth in $[-2,2]$, we need 

\begin{equation} \label{zeros condition}
(-1)^m \alpha_1 k_s \left( \tfrac{2\mu_1 m}{n} \right) + \alpha_2 k_r \left( \tfrac{2\mu_2 m}{n} \right) = 0, \qquad \forall m \in \ZZ \quad \text{s.t.} \ \tfrac{m}{n} \in [-1,1].
\end{equation}
\begin{enumerate}
\item $r=s=3$, if $n \neq \pm 1$, then \eqref{zeros condition} must hold for $m=1,2$, one can easily see that this leads to a contradiction. Therefore $n = \pm 1$, in which case \eqref{zeros condition} implies $\alpha_1 \sinh(2 \mu_1) = \alpha_2 \sinh(2\mu_2)$. To find $\mathcal{c}$, we substitute these expressions into \eqref{sesqui comm rel} and look at the coefficients of linearly independent parts, which must vanish. In particular the coefficient of $e^{\tau y}$ gives

\begin{equation*}
c_3 \left\{ \alpha_2 \sinh(\mu_2 z) \left[ e^{- \frac{\lambda_2 - 2\tau}{2} z} - e^{\frac{\lambda_2}{2} z} \right] +
\alpha_1 \sinh(\mu_1 z) \left[ e^{- \frac{\lambda_1 - 2\tau}{2} z} - e^{\frac{\lambda_1}{2} z} \right] \right\} = 0 .
\end{equation*}

\noindent The four exponentials in square brackets are linearly independent, moreover their exponents are purely imaginary, while $\mu_1, \mu_2$ are real, hence all the terms are linearly independent, therefore our conclusion is that $c_3 = 0$, otherwise $k=0$. Using similar arguments and looking at coefficients of $y e^{\lambda_j y}, e^{\lambda_j y}$ we find $c_1 = c_2 = 0$ and   

\begin{equation} \label{d1 and d2}
d_1 = -\frac{i e^{- \frac{i \pi}{2}}}{8} [\lambda_1^2- 4\mu_2^2], \qquad d_2 = \frac{i e^{\frac{i \pi}{2}}}{8} [\lambda_2^2- 4\mu_1^2]
\end{equation}

\item $s=1, r=3$, we can absorb $\mu_1$ into $\alpha_1$ and relabel $\mu_2$ by $\mu$, as in 1 we see $n = \pm 1$ and $2 \alpha_1 = \alpha_2 \sinh(2\mu)$. Then one can find $c_1 =c_2=c_3=0$ and \eqref{d1 and d2} holds with $\mu_2 = 0$ and $\mu_1 = \mu$. 

\item $r=s=1$, absorb $\mu_j$ into $\alpha_j$, again $n = \pm 1$ and $\alpha_1 = \alpha_2$, in which case (up to a real multiplicative constant) $k(z) = e^{i \beta z} \frac{z}{\sin (\pi z / 4)}$, then we can conclude $c_1 = c_2 = 0$, $\tau = 2i \beta$ and \eqref{d1 and d2} holds with $\mu_1= \mu_2 = 0$.

\item $s=1, r=2$, absorb $\mu_1$ into $\alpha_1$. If $n = \pm 1$ we get $2 \alpha_1 = \alpha_2 \sin (2 \mu_2)$, and following the strategy described in 1 we find $c_1 = c_2 = c_3 = 0$, and \eqref{d1 and d2} holds with $\mu_1 = 0$ and $\mu_2$ replaced by $i \mu_2$. If $|n|>1$, then \eqref{zeros condition} holds for at least $m=1,2$. It is easy to see that these two equations imply $\alpha_1 = 0$ and $\sin \left( \frac{2 \mu_2}{n} \right) = 0$. But in that case \eqref{zeros condition} holds for any $m \in \ZZ$. So $\mu_2 = \frac{\pi n l}{2} $ for some $l \in \ZZ$, hence we see that $k$ is a trigonometric polynomial, and therefore is trivial.

\item $s=3, r=2$, again if $|n| > 1$ we get $\alpha_1 = 0$ and $\sin \left( \frac{2 \mu_2}{n} \right) = 0$, which again implies $k$ is trivial. So $n = \pm 1$, and we find $\alpha_1 \sinh(2 \mu_1) = \alpha_2 \sin (2 \mu_2)$ 

\item $s=r=2$, as we saw in Lemma~\ref{two type 1 roots with one mult>1 LEMMA} if $n \neq \pm 1$, then $k$ is trivial. So $n = \pm 1$ and $\alpha_1 \sin (2 \mu_1) = \alpha_2 \sin (2 \mu_2)$, one of $\alpha_j$ is nonzero, assume it is $\alpha_2$. When $\sin(2 \mu_1) = 0$, then $\sin(2 \mu_2) = 0$ and again $k$ is a trigonometric polynomial. So $\sin (2 \mu_1) \neq 0$ and also $\sin (2 \mu_2) \neq 0$, again because of the same reason. We then find $c_1 = c_2 = 0$, \eqref{d1 and d2} holds with $\mu_j$ replaced by $i \mu_j$ for $j=1,2$. Finally the relation for $e^{\tau y}$ reads

\begin{equation*}
c_3 \left\{ \tilde{\alpha}_1 \sin(\mu_1 z) \left[ e^{(\tau - \frac{\lambda_1}{2}) z} - e^{\frac{\lambda_1}{2} z} \right] + \tilde{\alpha}_2 \sin(\mu_2 z) \left[ e^{(\tau - \frac{\lambda_2}{2}) z} - e^{\frac{\lambda_2}{2} z} \right] \right\} = 0 ,
\end{equation*}

\noindent where $\tilde{\alpha}_j = \sin (2 \mu_j) \neq 0$, $\lambda_1 - \lambda_2 = \frac{i \pi}{2}$. Now $c_3 = 0$ or the function in curly brackets (denote it by $f(z)$) vanishes, looking at the asymptotics $f(iz) $ as $z \to \infty$, and also at $f'(0), f''(0), f^{(4)}(0)$ we can find that $f =0$ \IFF $\mu_2 = \mu_1 \pm \frac{\pi}{2}$ (which implies $\tilde{\alpha}_1 = - \tilde{\alpha}_2$) and $\tau = 2i ( \beta - \frac{\pi}{4} \pm \mu_1)$.

Choosing $\beta = 0$ (cf. Remark~\ref{REM multiplier sesqui}) we conclude item 3 of Theorem~\ref{THM sesqui comm}.

\end{enumerate}


\section{$L_2 = - L_1$} \label{SECTION L2=-L1}

Assume the setting of Theorem~\ref{THM sesqui comm L2=-L1}, recall that $\mathcal{b}:=\mathcal{b}_1$ and $\mathcal{c}:=\mathcal{c}_1$. Now \eqref{R1} reads

\begin{equation} \label{L2=-L1}
\begin{split}
\mathcal{b}(y) k''(-z) + \mathcal{b}(y+z) k''(z) + \mathcal{b}'(y) k'(-z) + \mathcal{b}'(y+z) k'(z)+&
\\
+\mathcal{c}(y) k(-z) + \mathcal{c}(y+z) k(z) &= 0 .
\end{split}
\end{equation}

\noindent The analysis in the beginning of Section~\ref{Sesqui comm SECTION} shows that (in the case $L_2 = - L_1$) $\mathcal{b}(y)$ solves second order, linear homogeneous ODE with constant coefficients, and because of the boundary conditions it must be of the form

\begin{equation*}
\mathcal{b}(y) = b_1 e^{\lambda_1 y} + b_2 e^{\lambda_2 y},\quad
\mathcal{c}(y) = c_1 e^{\lambda_1 y} + c_2 e^{\lambda_2 y} + c_0 ,
\qquad \lambda_1 \neq \lambda_2,
\end{equation*}

\noindent where $\mathcal{c}$ is of the same form as $\mathcal{b}$ because it satisfies $\mathcal{c}' = - \frac{k_1}{k_0} \mathcal{b}' - \frac{k_2}{k_0} \mathcal{b}$. Clearly both $b_j$ are different from zero, and from boundary conditions

\begin{equation} \label{lambda - lambda'}
\lambda_1 - \lambda_2 = \pi i n, \qquad n \in \ZZ.
\end{equation} 

With these formulas, \eqref{L2=-L1} becomes a linear combination of functions $e^{\lambda_j y}$ with coefficients depending on $z$, hence each coefficient must vanish. Let us concentrate on the coefficient of $e^{\lambda_1 y}$, making the change of variables $k(z) = \kappa(z) e^{-\lambda_1 z / 2}$ we rewrite it as

\begin{equation*}
\kappa''_+(z) - \mu^2 \kappa_+(z) = 0, \qquad \mu = \sqrt{\tfrac{\lambda_1^2}{4} - \tfrac{c_1}{b_1}} ,
\end{equation*}  

\noindent where $\kappa_+$ is the even part of $\kappa$, because it is an even function we get

\begin{equation*}
\kappa_+(z) = \alpha \cosh (\mu z) .
\end{equation*}

\noindent The symmetry of $k$ implies

\begin{equation*}
e^{-\overline{\lambda_1} z / 2} \left( \overline{\kappa_+(z)} + \overline{\kappa_-(z)} \right) =  e^{\lambda_1 z / 2} \left( \kappa_+(z) - \kappa_-(z) \right) .
\end{equation*}

If $\lambda_1 = 2 i \beta$ with $\beta \in \RR$, then $\kappa_-$ is an arbitrary odd and purely imaginary function. Moreover, $\kappa_+$ must be real valued, hence 

\begin{equation} \label{k type 1 L2=-L1}
k(z) = e^{-i\beta z} \left(
\kappa_- (z) +
\begin{cases}
\alpha \cosh (\mu z)
\\
\alpha \cos (\mu z)
\end{cases}
\right) ,
\end{equation}

\noindent where $\alpha, \mu \in \RR$.

If $\lambda_1 = 2 \gamma + 2i \beta$ with $\gamma \neq 0$, then (recalling that $k$ is smooth at $0$), with $\kappa_0 \in \RR$

\begin{equation*}
k(z) = \alpha e^{-i \beta z} \frac{e^{\gamma z} \cosh(\mu z) -  e^{-\gamma z} \cosh(\overline{\mu} z)}{\sinh(2 \gamma z)} .
\end{equation*}

 Now $k$ should come from two distinct modes $\lambda_1, \lambda_2$, and from \eqref{lambda - lambda'} we see that $\Re \lambda_1 = \Re \lambda_2=: 2 \gamma$, so if $\gamma \neq 0$ we must have

\begin{equation*}
\alpha_1 e^{-i \beta_1 z} \left( e^{\gamma z} \cosh(\mu z) -  e^{-\gamma z} \cosh(\overline{\mu} z) \right) = \alpha_2 e^{-i \beta_2 z} \left( e^{\gamma z} \cosh(\nu z) -  e^{-\gamma z} \cosh(\overline{\nu} z) \right) ,
\end{equation*}

\noindent which implies $\beta_1 = \beta_2$, leading to a contradiction. Indeed, the function on LHS (denoted by $f(z)$) determines $\beta_1$, because with $\mu = \mu_1 + i \mu_2$

\begin{equation*}
f(iz) = \kappa_0 e^{\beta_1 z} \left[ i e^{\mu_2 z} \sin \left((\gamma - \mu_1)z \right) +  e^{-\mu_2 z} \cos \left( (\gamma + \mu_1)z \right) \right] .
\end{equation*}

\noindent Assume $\mu_2 > 0$, then $f(iz) \sim \kappa_0 e^{(\beta_1 + \mu_2) z} \sin \left((\gamma - \mu_1)z \right) $ as $z \to + \infty$, hence $\beta_1 + \mu_2$ is determined by $f$, but by looking at the asymptotics as $z \to - \infty$ we see that also $\beta_1 - \mu_2$ is determined, hence so is $\beta_1$. The case $\mu_2 \leq 0$ is done analogously.

Thus $\lambda_j = 2i \beta_j \in i\RR$ and $k$ is given by \eqref{k type 1 L2=-L1}, then $\kappa_-$ is determined and we can find

\begin{equation} \label{k when l2=-l1}
k(z) = \frac{\alpha_1 k_s'(\mu_1 z) e^{i \beta_1 z} + \alpha_2 k_r'(\mu_2 z) e^{i \beta_2 z}}{i \sin(\beta_1 - \beta_2)z}, \qquad r,s \in \{1,2,3\} ,
\end{equation}

\noindent where all the constants are real, and $k_r'$ is the derivative of function $k_r$ defined in \eqref{k1, k2 and k3}. Moreover because $k$ is smooth at $0$, we must have $\alpha_2 = - \alpha_1$. The denominator of the above function vanishes at $z = \frac{2m}{n}$ with $m \in \ZZ$, since $k$ is smooth in $[-2,2]$ we should require

\begin{equation*}
(-1)^m k_s' \left( \tfrac{2 \mu_1 m}{n} \right) - k_2' \left( \tfrac{2 \mu_2 m}{n} \right) = 0, \qquad \forall m \in \ZZ, \ \text{s.t.} \ \tfrac{m}{n} \in [-1,1].
\end{equation*} 

\noindent Because $n \neq 0$, this condition should hold at least for $m=1$. One can easily check that this implies that the functions given by \eqref{k when l2=-l1} are either zero, or trigonometric polynomials, and therefore: trivial.

\medskip

\noindent\textbf{Acknowledgments.}
This material is based upon work supported by the National Science Foundation under Grant No. DMS-1714287.


\begin{thebibliography}{10}

\bibitem{al-aifari katsevich} R. Al-Aifari , A. Katsevich, 
\textit{Spectral analysis of the truncated Hilbert transform with overlap}, SIAM Math Analysis Vol. 46 Issue 1, 2014

\bibitem{bracewell riddle} R. N. Bracewell, A. C. Riddle, 
\textit{Inversion of fan-beam scans in radio astronomy}, Astrophys. J. 150, 1967, pg. 427-434 

\bibitem{bracewell wernecke} R. N. Bracewell, S. J. Wernecke, 
\textit{Image reconstruction over a finite field of view}, J. Opt. Soc. Amer. 65, 1975, pg.1342-1346 

\bibitem{davenport root} W. B. Davenport, Jr., and W. L. Root, \textit{Random signals and noise}, McGraw Hill, New York, 1958 


\bibitem{frieden} B. R.Frieden, 
\textit{Evaluation, design, and extrapolation methods for optical signals based on the use of the prolate functions}, Prog. Opt. X, 1969


\bibitem{gechberg} R. W. Gechberg , W. O. Saxton , in Optik 35, 1972, 237


\bibitem{gr hov commut} Y. Grabovsky and N. Hovsepyan, \textit{On the commutation properties of finite convolution and differential operators I: commutation}, preprint

\bibitem{grunbaum} F. A. Grunbaum, \textit{Differential operators commuting with convolution integral operators}, Journal of Mathematical Analysis and Applications, Volume 91, Issue 1, 1983, pg. 80-93

\bibitem{grunbaum2} F. A. Grunbaum, \textit{A study of Fourier space methods for limited angle image reconstruction}, Funct. Anal. Optim. 2 (1), 1980, pg. 31-42



\bibitem{katsevich1} A. Katsevich, \textit{Singular value decomposition for the truncated Hilbert transform}, Inverse Problems, 2010, 26 115011

\bibitem{katsevich2} A. Katsevich, \textit{Singular value decomposition for the truncated Hilbert transform: part II}, Inverse Problems, 2011, 27 075006

\bibitem{katsevich tovbis} A. Katsevich, A. Tovbis, \textit{Finite Hilbert transform with incomplete data: null-space and singular values}, Inverse Problems, 2012, 28 105006


\bibitem{morrison} J. A. Morrison, \textit{On the commutation of finite integral operators, with difference kernels, and linear self-adjoint differential operators}, Notices Amer. Math. Soc. 9, 119 (1962)



\bibitem{tam} K.C. Tam, V. Perez-Mendez, B. McDonald, \textit{3-D object Reconstruction in Emission and Transmission Tomography and Limited Angular Input}, LBL-8539, 1979


\bibitem{widom} H. Widom, \textit{Asymptotic behavior of the eigenvalues of certain integral equations II}, Arch. Rat. Mech. Anal. 17 (1964) 215--229


\bibitem{wright} P. E. Wright, \textit{Finite symmetric convolution operators and singular symmetric differential operators}, Journal of Mathematical Analysis and Applications, Volume 148, Issue 2, 1990, pg. 538-547

\bibitem{zettl} A. Zettl, \textit{Sturm-Liouville Theory}. Providence, R.I.: American Mathematical Society, 2005.



\end{thebibliography}
\end{document}